\pgfplotsset{compat=1.13}
\theoremstyle{plain}
\newtheorem{theorem}{Theorem}[section]
\newtheorem{lemma}[theorem]{Lemma}
\theoremstyle{definition}
\newtheorem{definition}[theorem]{Definition}
\theoremstyle{remark}
\newtheorem{remark}[theorem]{Remark}
\numberwithin{equation}{section}
\numberwithin{figure}{section}   
\newcommand{\vect}[1]{\mathbf{#1}}
\newcommand{\bk}{\vect{k}}
\newcommand{\bc}{\vect{c}}
\newcommand{\bB}{\vect{B}}
\newcommand{\bu}{\vect{u}}
\newcommand{\bv}{\vect{v}}
\newcommand{\bw}{\vect{w}}
\newcommand{\bx}{\vect{x}}
\newcommand{\by}{\vect{y}}
\newcommand{\bbf}{\vect{f}}
\newcommand{\field}[1]{\mathbb{#1}}
\newcommand{\nN}{\field{N}}
\newcommand{\nZ}{\field{Z}}
\newcommand{\nR}{\field{R}}
\DeclareSymbolFont{bbold}{U}{bbold}{m}{n}
\DeclareSymbolFontAlphabet{\mathbbold}{bbold}
\newcommand{\bvphi}{\boldsymbol{\vphi}}
\newcommand{\bomega}{\boldsymbol{\omega}}
\newcommand{\nT}{\mathbb T}
\newcommand{\vphi}{\varphi}
\newcommand{\cnj}[1]{\overline{#1}}
\newcommand{\pd}[2]{\frac{\partial #1}{\partial #2}}
\newcommand{\set}[1]{\left\{#1\right\}}
\newcommand{\ip}[2]{\left<#1,#2\right>}
\newcommand{\pnt}[1]{\left(#1\right)}
\newcommand{\pair}[2]{\left(#1,#2\right)}
\newcounter{my_counter}
\title[Rotational Burgers, Kuramoto-Sivashinsky]
{Burgers equation with a twist: \\A study on rotational-form equations}
\date{\today}
\author{Adam Larios}
\address[Adam Larios]{Department of Mathematics, 
                University of Nebraska--Lincoln,
        Lincoln, NE 68588-0130, USA}
\email[Adam Larios]{alarios@unl.edu}
\keywords{Burgers equation, Kuramoto-Sivashinsky equation, rotational formulation, Navier-Stokes equations, global well-posedness, attractors.}
\thanks{MSC 2010 Classification: 35K25, 35K58, 35B65, 35B41, 35Q35, 76F20}
\begin{document}

\begin{abstract}
 A new three-dimensional (3D) equation is proposed, which is formed like Burgers' equation by starting with the 3D incompressible Navier-Stokes equations (NSE) and eliminating the pressure and the divergence-free constraint, but instead the Bernoulli pressure is eliminated, leaving only the rotational form of the nonlinearity.  This results in a globally well-posed 3D equation which has exactly the same energy balance as the 3D NSE.  Moreover, we show in simulations that the system seems to exhibit chaotic dynamics.  
 In the viscous case, we prove the global existence, uniqueness, and higher-order regularity of solutions to this equation with no restriction on the initial data other than smoothness.  In the inviscid case, local existence holds, but we give an example of a class of solutions with smooth initial data that develop a singularity in finite time in both 2D and 3D.  Moreover, a new numerical algorithm is presented in the 2D case, and simulations are included to illustrate the dynamics.  In addition, a rotational-form modification for the 2D Kuramoto-Sivashinsky equations (KSE) is proposed, and global well-posedness is also established.  We also discuss several related ``rotational form'' equations, and some pedagogical considerations.  Global well-posedness for the original 3D NSE and 2D KSE remains a challenging open problem, but it is hoped that by focusing on the rotational term, new insight may be gained.  
\end{abstract}

\maketitle

\thispagestyle{empty}

\section{Introduction}\label{secInt}
\noindent
A central difficulty in fluid dynamics is to understand how to handle the nonlinear advective term $(\bu\cdot\nabla)\bu$ in the equations of motion, where $\bu$ is the fluid velocity.  However, it is well-known that the Navier-Stokes equations (NSE) of incompressible, constant density flows, can be rewritten in an equivalent form, replacing the advective term by a rotational term; namely $(\nabla\times\bu)\times\bu$, and replacing the pressure $p$ by the Bernoulli pressure (also called the \emph{dynamic pressure}); namely  $p+\frac12|\bu|^2$.  The rotational term has identical scaling properties to the advective term, but it is arguably more geometric in nature, having the appealing property that it is automatically pointwise orthogonal to both the velocity and the vorticity $\bomega:=\nabla\times\bu$.  Momentarily neglecting viscous effects and external forcing, the incompressible NSE can therefore be thought of as a system which evolves by a kind of length-preserving local rotation, orthogonal to its own velocity and vorticity, and a length-deforming dynamic pressure gradient, which only acts to enforce the divergence-free condition in the face of this local rotation.  In the present work, we focus on this rotational evolution isolated from the length-deforming effects of the Bernoulli pressure. 
For over a century, people have tried to understand aspects of the Navier-Stokes equations by considering a simplified model based on dropping the pressure term $p$ and the divergence-free constraint from the equations, resulting in what is called the Burgers equation (first introduced by H. Bateman in 1915 \cite{Bateman_1915_burgers}, and later popularized by J. Burgers \cite{Burgers_1948}).  The study of the Burgers equation has led to so many fruitful insights that we cannot possibly list them all here (although see \cite{bonkile2018burgers,Pooley_Robinson_2016} for historical discussions, and \cite{Bec_Khanin_2007_Burgers_turbulence} for an engaging discussion on ``Burgers turbulence'').  We also mention in passing another approach to eliminating the pressure: the so-called ``cheap Navier-Stokes equation'' $u_t =\triangle u + (-\triangle)^{1/2}u^2$, proposed in \cite{Montgomery_Smith_2001_cheapNSE}, where it was shown to experience finite-time blow-up.   Instead of these scalar approaches, we consider a different equation, where we drop the full Bernoulli pressure (and the divergence-free constraint), resulting in a ``rotational form'' Burgers-like equation (this is the ``twist'' referred to in the title).  To be clear, our interest here is not to model a physical phenomenon, but to study the dynamical effect of the rotational nonlinearity.  As we show below, this system is dynamically non-trivial and globally well-posed in the viscous case, and has several properties that make it dynamically distinct from Burgers equation. It can also develop a finite-time singularity in the inviscid case.
We also consider a similar modification of the vector form of the multi-dimensional Kuramoto-Sivashinsky equation (KSE) of flame fronts.  
Unlike in the NSE case, replacing the nonlinear term by a rotational form does not seem to have a clear physical interpretation (e.g., we cannot say we are ``dropping the Bernoulli pressure''); however, this form is quite interesting from the perspective of global well-posedness of equations.  To illustrate this, we note that proving global well-posedness of the 2D KSE (without smallness restrictions on the initial data or domain size) is a long-standing open problem, despite much work on this problem (see discussions in, e.g, \cite{Ambrose_Mazzucato_2018,Ambrose_Mazzucato_2021,CotiZelati_Dolce_Feng_Mazzucato_2021,Feng_Mazzucato_2021,Larios_Martinez_2024,Larios_Rahman_Yamazaki_2021_JNLS_KSE_PS,Larios_Yamazaki_2020_rKSE}).  However, as we show below, replacing the advective term $(\bu\cdot\nabla)\bu$ by the rotational term $(\nabla\times\bu)\times\bu$ immediately allows for a proof of global well-posedness.\footnote{Chronologically, I first thought of the rotational modification of the KSE as a way to have a well-posed multi-dimensional KSE-like equation, and only later realized that a similar modification could be made for the Navier-Stokes equations, and that one could interpret the result as ``Navier-Stokes without the Bernoulli pressure.''}  Moreover, as we show in simulations, the dynamical behavior of this system is non-trivial.  
It is hoped that by studying the rotational modifications proposed here, new perspectives and insight into original equations will be gained, as in the case of the Burgers equation which has led to so much understanding.  Moreover, the dynamics of these rotational-form equations appear interesting in their own right, and may provide new objects of study in the theory of dynamical systems of partial differential equations.
We note that there have been several works (see, e.g.,  \cite{Layton_Manica_Neda_Olshanskii_Rebholz_2009rotational,
Gardner_Larios_Rebholz_Vargun_Zerfas_2020_VVDA,
Manica_Neda_Olshanskii_Rebholz_2011,
Moser_Moin_1987effects,
Lube_Olshanskii_2002,
Palha_Gerritsma_2016,
Rukavishnikov_Rukavishnikov_2023_corner_JCAM} and the references therein), that have exploited the rotational form of Navier-Stokes for improved numerical properties (see, e.g., \cite{Canuto_Hussaini_Quarteroni_Zang_2006,Canuto_Hussaini_Quarteroni_Zang_2007_complex_geom} for a detailed discussion).  If one does not use so-called ``div-grad stabilization'' as in \cite{Layton_Manica_Neda_Olshanskii_Rebholz_2009rotational}, this can sometimes lead to a decrease in accuracy, possibly due to discretization near the wall \cite{Horiuti_1987_JCP,Horiuti_Itami_1998_JCP,Zang_1991_rotation_skew_sym_ANM}, although   \cite{Olshanskii_2002_low,Olshanskii_Reusken_2002_multigrid_SJSC} suggest that these inaccuracies may be due to difficulties in computing the Bernoulli pressure.
The present work is organized as follows.
In Section \ref{sec_relations_NSE}, we discuss relations to the Navier-Stokes equations, and also discuss some other related equations.
In Section \ref{secPre}, we lay out some preliminary results and notation.
In Section \ref{sec_rB_well_posedness} we prove global well-posedness for \eqref{Burgers_twist}.
In Section \ref{sec_global_attractor}, for a damped-driven version of \eqref{Burgers_twist}, we establish the existence of a global attractor in $H^1$, but in the $L^2$ topology, and also a global attractor in the restricted space $H^1\cap L^\infty$.  
In Section \ref{sec_inviscid_case}, we establish local well-posedness of analytic solutions to the inviscid rotational Burgers equation.  We then show a family of initial data for which the corresponding solutions to the inviscid equations blow up in finite time.
In Section \ref{sec_rotKSE}, we discuss the Kuramoto-Sivashinsky equation, propose a rotational modification of the equation, and prove its global well-posedness.  
In Section \ref{subsec_rot_num}, we propose a new numerical algorithm for \eqref{Burgers_twist}, in the 2D case, based on the geometry of the rotational form.
In Section \ref{sec_simulations},  show some simulations in different regimes to illustrate the dynamics of the viscous  equations.
In Section \ref{sec_additional_considerations}, we give some additional considerations. 
We give some concluding remarks in Section \ref{sec_conclusions}.
In subsection \ref{sec_pedagogical}, we discuss how the ``Burgers equation with a twist'' may be useful for example in an advanced PDE course, since it shares many of the features of the 3D NSE without the need to develop the divergence-free framework.  Moreover, many of the subtler details of the well-posedness theory for these types of equations is scattered about in various references or difficult to find in the literature.  Therefore, in order to keep the paper a somewhat self-contained reference (and also because estimates for Burgers-type equations are somewhat less flexible that those for the NSE, due to the lack of symmetry properties in the nonlinear term, and therefore do not always follow similarly to the NSE case, see, e.g., Remark \ref{remark_no_weak_Burgers_twist_sol}), we include the full details in most of the proofs. 
\begin{remark}
Many of the results in this paper, such as Theorem \ref{thm_strong_local} can be easily adapted to the case of a bounded domain $\Omega\subset\nR^3$ with homogeneous Dirichlet (i.e., ``no slip'') boundary conditions $\bu\big|_{\partial\Omega}=\mathbf{0}$, or in the case of the Theorem \ref{thm_GWP_vKSE}, $\bu\big|_{\partial\Omega}=\triangle \bu\big|_{\partial\Omega}=\mathbf{0}$.   (\textit{cf.} Remark \ref{remark_attractor_needs_poincare}.) However, to keep the presentation simpler, we focus on the case of periodic domains.
\end{remark}
\section{Relations to Navier-Stokes and other equations}\label{sec_relations_NSE}
 \noindent
In this section, we motivate the main equation by discussing it in the context of the Navier-Stokes equation and the usual Burgers equation.
\subsection{The Bernoulli pressure and the Lamb vector}
We recall the incompressible, constant-density Navier-Stokes equations, given by 
\begin{subequations}\label{NSE}
\begin{align}
\pd{\bu}{t}+(\bu\cdot\nabla)\bu  +\nabla p &= \nu \triangle\bu +\bbf,\\
\nabla\cdot\bu &= 0,
\end{align}
\end{subequations}
taken here with just periodic boundary conditions for simplicity.  Here, $\bu=\bu(\bx,t)=(u_1(\bx,t),u_2(\bx,t),u_3(\bx,t))$ is the fluid velocity, $p=p(\bx,t)$ is the fluid pressure, $\nu\geq0$ is the kinematic viscosity, and $\bbf=\bbf(\bx,t)$ is the body force.
Note that it is common practice, especially in velocity-vorticity formulations
(see, e.g., \cite{G91,Gardner_Larios_Rebholz_Vargun_Zerfas_2020_VVDA,GHH90,GHOR15,HOR17,Larios_Pei_Rebholz_2018,LOR11,LYM06,MF00,OR10,WB02,WWW95})
to use the vector identity, 
\begin{align}\label{vec_identity}
 (\bu\cdot\nabla)\bu = (\nabla\times\bu)\times\bu + \tfrac12\nabla|\bu|^2,
\end{align}
to rewrite equations \eqref{NSE} in the so-called ``rotational form,'' given by
\begin{subequations}\label{NSE_vor}
\begin{align}
\pd{\bu}{t}+\bomega\times\bu  +\nabla \pi &= \nu \triangle\bu+\bbf,\\
\nabla\cdot\bu &= 0,\label{NSE_vor_div}
\end{align}
\end{subequations}
where $\pi:=p + \tfrac12|\bu|^2$ is the Bernoulli pressure\footnote{The Bernoulli pressure is sometimes called the ``dynamic pressure'', but this risks confusion with other quantities, such as $\frac12\rho|\bu|^2$, so we refer to it as the Bernoulli pressure.} and $\bomega:=\nabla\times\bu$ denotes the vorticity.  
As mentioned above, this rotational form of the Navier-Stokes equations reveals that the nonlinear parts of the Navier-Stokes equations (either $(\bu\cdot\nabla)\bu+\nabla p$ or $\bomega\times\bu+\nabla \pi$) can be thought of as a ``twist plus deformation-back-to-incompressiblity'' action.  That is, the term $\bomega\times\bu$, known as the Lamb vector\footnote{\label{footnote_Lamb}The vector $(\nabla\times\bu)\times\bu$, is sometimes called the Lamb vector after Sir Horace Lamb \cite{Lamb1924hydrodynamics,Truesdell2018Vorticity}.  It is related to the Craik–Leibovich vortex force (see, e.g., \cite{Holm_1996_CL} and the references therein) and is an important term in Crocco's Theorem \cite{Crocco_1937_ZAMM}. We note that it need not be a gradient; for example, for $\bu=(y,0,x)$, $(\nabla\times\bu)\times\bu=(-x,-y,y)$, which has non-zero curl.  The vector $\bu=(\sin(y),0,\sin(x))$ provides a similar example in the periodic case.  Note that the Lamb vector is identically zero in Beltrami flows (by definition), and also trivially in irrotational (i.e., $\bomega\equiv\mathbf{0}$) flows.  It is frequently used in Navier-Stokes simulations (with Bernoulli pressure), as computing $(\nabla\times\bu)\times\bu$ requires fewer derivatives and fewer multiplications than directly computing $(\bu\cdot\nabla)\bu$, although alternatives such as the Basdevant formulation \cite{Basdevant_1983_JCP} are also popular.  We note that the beautiful work \cite{Hamman_Klewicki_Kirby_2008_Lamb},  discusses the importance of a geometric and dynamical understanding of the Lamb vector in the context of understanding turbulence.  In particular, the authors demonstrate that regions where the divergence of the Lamb vector switches sign are associated with strong turbulence.}, rotates the velocity orthogonally to itself and the vorticity (see Figure \ref{figs:fields}), locally creating circular motions without any stretching or compression of individual vectors.  
\begin{figure}[htp!]
\centering
\begin{subfigure}[t]{.5\textwidth}
    \includegraphics[width=\linewidth,trim=0 0 0 150,clip]{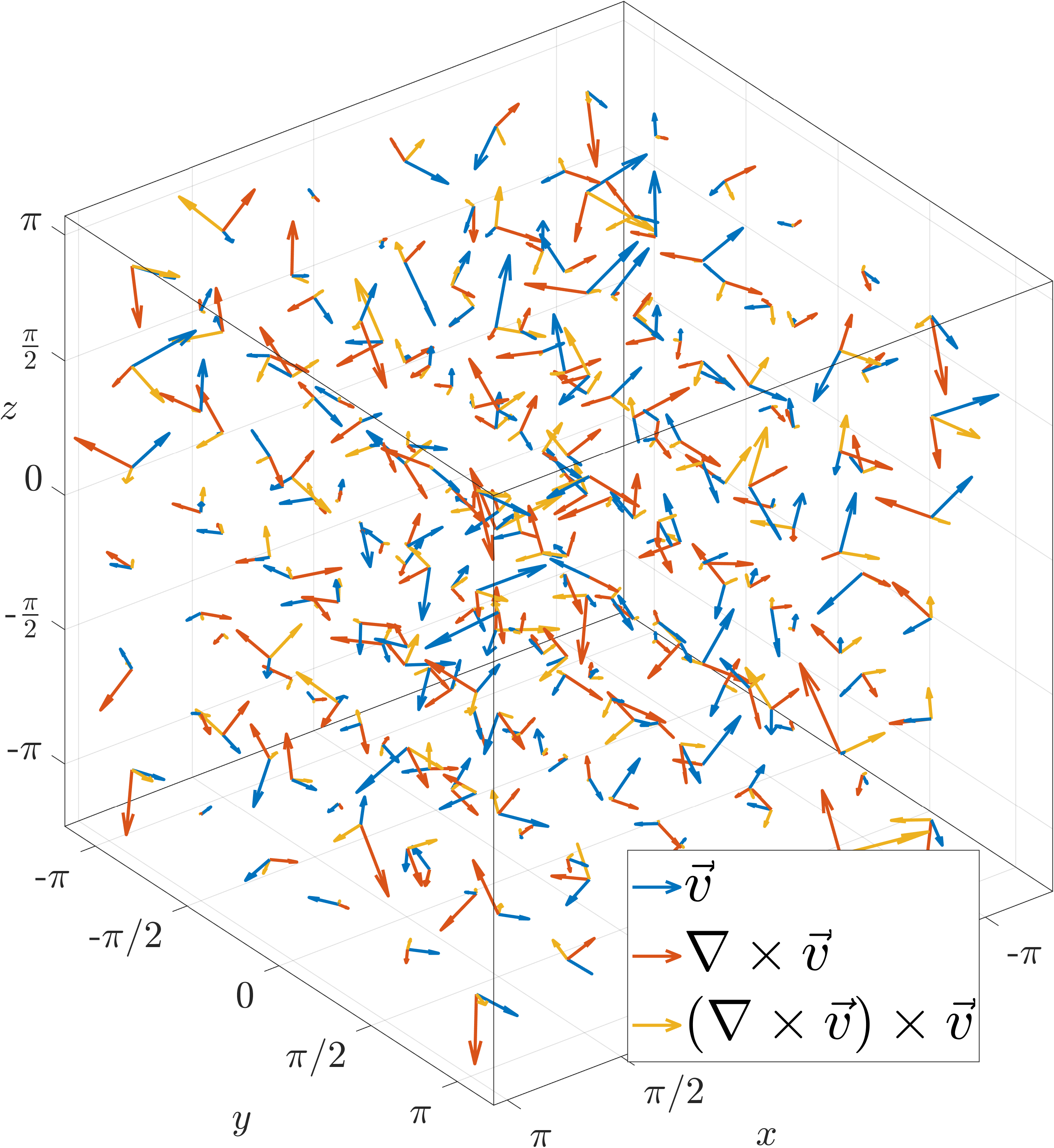}
    \caption{\label{fields_3D}\footnotesize 3D velocity, vorticity, and Lamb vector}
\end{subfigure}
\hfill
\begin{subfigure}[t]{.49\textwidth}
    \includegraphics[width=\linewidth,trim=0 0 0 0,clip]{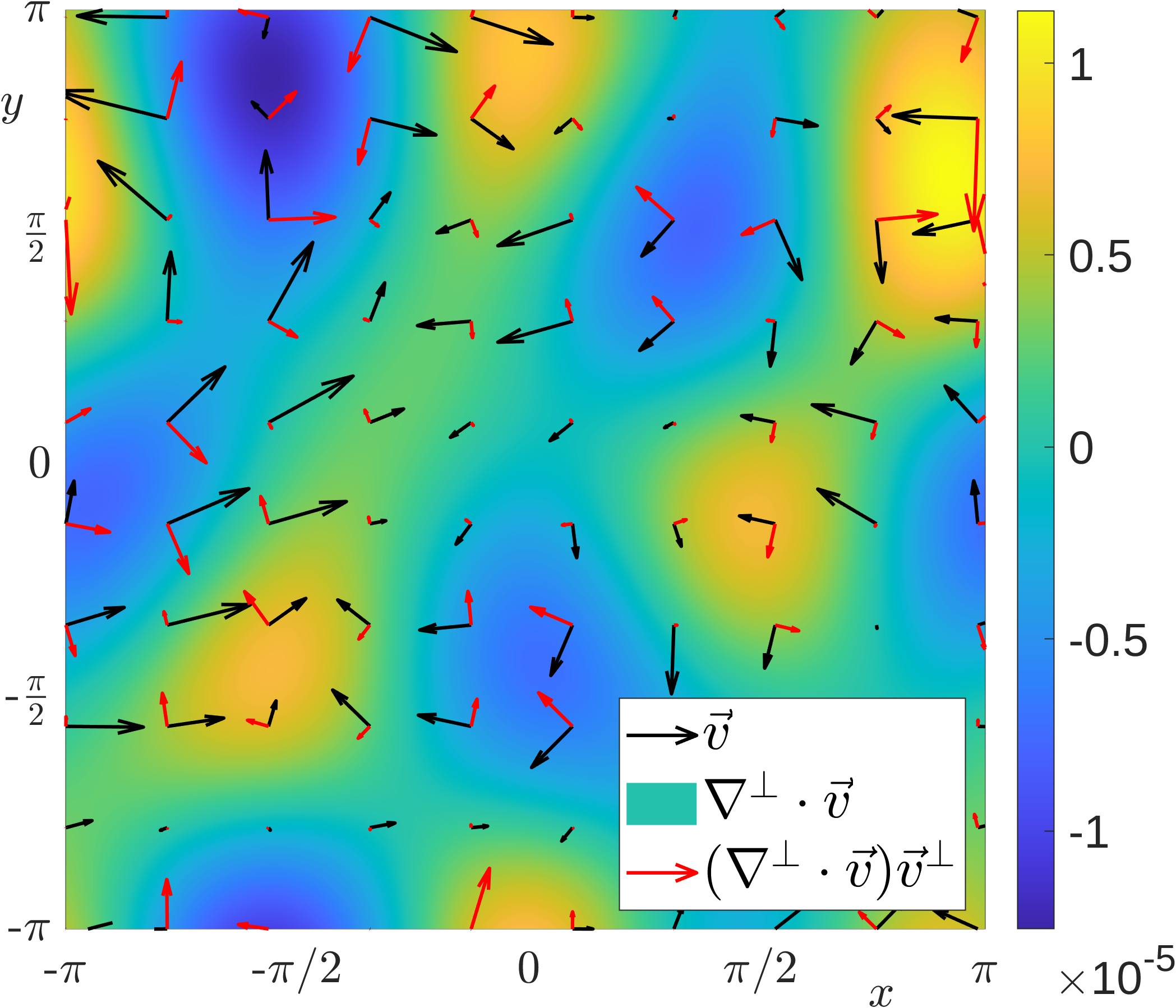}
    \caption{\label{fields_2D}\footnotesize 2D (curl shown as scalar quantity)}
\end{subfigure}
\caption{\label{figs:fields}\footnotesize 
(A): Visualization of a randomly-generated smooth 3D vector field $\vec{v}$, along with its curl $\nabla\times \vec{v}$, and the Lamb vector $(\nabla\times \vec{v})\times \vec{v}$. (B): 2D analogs of the fields in (A).  Vectors not to scale.
}
\end{figure}
However, this kind of rotational motion by itself can violate the divergence-free condition \eqref{NSE_vor_div}, so the $\nabla \pi$ term precisely acts to enforce this condition.  Moreover, since stretching or compression of individual vectors cannot arise from the rotational term, it must arise only from the $\nabla \pi$ term (of course, viscosity or external forcing may contribute as well). 
The above remarks raise questions about the dynamical effect of the rotational term alone.  To this end, let us first recall the Burgers equation, which has the multi-dimensional viscous form
\begin{align}\label{Burgers}
\pd{\bu}{t}+(\bu\cdot\nabla)\bu &= \nu \triangle\bu+\bbf.
\end{align}
The Burgers equation is often used as a test-bed for ideas, since heuristically speaking, it is in some sense ``Navier-Stokes without the pressure'' (where, of course, one must also drop the divergence-free constraint).  In the same light, we can propose the following initial value problem, which is in some sense ``Navier-Stokes without the \emph{Bernoulli pressure}'' (where we still drop the divergence-free constraint); namely,
\begin{subequations}
\label{Burgers_twist_all}
\begin{empheq}[left=\empheqlbrace]{align}
\label{Burgers_twist}
\pd{\bu}{t}+\bomega\times\bu &= \nu \triangle\bu+\bbf, \qquad \bomega:=\nabla\times\bu,
\\
\label{Burgers_twist_initial_data}
\bu(0) &= \bu_0.
\end{empheq}
\end{subequations}
Equation \eqref{Burgers_twist} is a sort-of ``Burgers equation with a twist'' or ``rotational Burgers equation.'' It is a major goal of the present work to study this equation\footnote{We note in passing that \eqref{Burgers_twist} bears a superficial resemblance to the electron-MHD equation $\bB_t + \nabla\times((\nabla\times \bB)\times \bB) = \eta\triangle \bB + \bbf$, see, e.g., \cite{Gordeev_Kingsep_Sergeevic_Rudakov_1994_electron_MHD,Dai_Krol_Lio_2022_electron_MHD,Jeong_Oh_2025_electron_MHD} and the references therein.  However, electron-MHD (E-MHD) seems to be of a fundamentally different character than that of the \eqref{Burgers_twist}.  For example, E-MHD conserves the divergence-free condition, while \eqref{Burgers_twist} does not.  Moreover, E-MHD is second-order quasilinear and allows for finite-time blow-up \cite{Dai_2025_blowup_forced_electron_MHD}, while equation \eqref{Burgers_twist}is semilinear and, as we show, is globally well-posed when $\nu>0$.  When $\nu=0$, \eqref{Burgers_twist} is first-order quasilinear and, as we show, allows for finite-time blow-up.}.
Denoting the velocity magnitude by $|\bu|:=\sqrt{u_1^2+u_2^2+u_3^2}$ and also $|\nabla\bu|:=\sqrt{\sum_{i,j}(\partial_{x_i}u_j)^2}$, and recalling the well-known vector identity,
\begin{align}
\label{id_u_dot_Laplace}
    \bu\cdot(\triangle\bu) &= \tfrac12\triangle|\bu|^2 - |\nabla\bu|^2,
\end{align}
and the standard cross-product orthogonality for vectors $\mathbf{A}$ and $\mathbf{B}$,
\begin{align}\label{cross_prod_ortho}
(\mathbf{A}\times\mathbf{B})\cdot\mathbf{B}=0,
\end{align}
one obtains the following local identity for smooth solutions to \eqref{Burgers_twist}:
\begin{align}\label{Burgers_vor_dot}
\frac12\frac{\partial}{\partial t}|\bu|^2 = \frac{\nu}{2}\triangle|\bu|^2 - \nu|\nabla\bu|^2 + \bbf\cdot\bu.
\end{align}
 Thus, we see that, formally, in the case when $\bbf\equiv\mathbf{0}$, if $|\bu(t,\bx)|^2$ is at a spatial maximum $\bx$, it must be non-increasing in time due to \eqref{Burgers_vor_dot}.  Hence, a maximum principle holds, so long as one can justify this formal argument rigorously, which we do later in this manuscript (although we will allow a non-zero force).  Such an argument was likely first made by O. Ladyzhenskaya et al. (see, e.g.,  \cite{Ladyzhenskaya_1968}) in the context of the multi-dimensional Burgers equation \eqref{Burgers}, although see \cite{Pooley_Robinson_2016} for an historical discussion and a modern proof.  In the work below, we rigorously establish a maximum principle, and use it to prove global well-posedness of \eqref{Burgers_twist} when $\nu>0$.
\subsection{Related equations and other considerations.}
Here, we highlight some differences between the standard Burgers equation \eqref{Burgers} and equation \eqref{Burgers_twist}.  Note that the multi-dimensional Burgers equation \eqref{Burgers} has the one-dimensional Burgers equation, 
\begin{align}\label{Burgers1D}
    u_t+uu_x=\nu u_{xx}+f,
\end{align} as a special case. 
If we try to do something similar for \eqref{Burgers_twist}, by examining the Lamb vector,
\begin{align}\label{lamb_vector_explicit}
\bomega\times\bu
&=
\begin{pmatrix}
u_3\partial_z u_1-u_3\partial_x u_3-u_2\partial_x u_2+u_2\partial_y u_1\\
u_1\partial_x u_2-u_1\partial_y u_1-u_3\partial_y u_3+u_3\partial_z u_2\\
u_2\partial_y u_3-u_2\partial_z u_2-u_1\partial_z u_1+u_1\partial_x u_3
\end{pmatrix},
\end{align}
we see that if, e.g., $u_1$ is independent of $y$ and $z$, and $u_2\equiv u_3\equiv0$, then the Lamb vector $\bomega\times\bu$ vanishes, and one obtain the heat equation $\partial_t u_1 = \nu \partial_{xx}u_1+f$, rather than \eqref{Burgers1D}.  However, see Section \ref{subsec_finite_time_blow_up} for a construction for which \eqref{Burgers1D} does arise.
Another difference between \eqref{Burgers_twist} and \eqref{Burgers} is that Burgers equation is Galilean invariant, whereas \eqref{Burgers_twist} is not.  Indeed, using \eqref{vec_identity}, we see that \eqref{Burgers_twist} is formally equivalent to the following system:
\begin{align}\label{Burgers_twist_adv_form}
\bu_t+\bu\cdot\nabla\bu &= \nu \triangle\bu + \tfrac12\nabla|\bu|^2+\bbf.
\end{align}
Note that \eqref{Burgers_twist_adv_form} is dimension-independent, as it does not involve the cross product.  
Next, given a constant vector $\bc$, let $\bu$ be a smooth function satisfying \eqref{Burgers_twist}, and denote $\bv(\by,t) = \bu(\by-\bc t,t)+\bc$. Straight-forward calculations show
\begin{align*}
\bv_t + \bv\cdot\nabla\bv =
\nu \triangle\bv + \tfrac12\nabla |\bv|^2 - \bc\cdot\nabla \bv+\bbf,
\end{align*}
and hence \eqref{Burgers_twist} does not enjoy Galilean invariance.
Equation \eqref{Burgers_twist} perhaps has more in common with the Navier-Stokes equations \eqref{NSE} than \eqref{Burgers}, in that smooth solutions for both systems satisfy the same energy balance relation (which does \textit{not} hold for Burgers equation \eqref{Burgers} in dimension $n\geq2$); namely,
\begin{align}\label{en_eq_heat}
\frac12\frac{d}{dt}\|\bu\|_{L^2}^2  + \nu\|\bu\|_{L^2}^2
= (\bbf,\bu).
\end{align}
This is because, in both cases the non-linear term is $L^2$-orthogonal to the flow.  However, it is worth noting that \eqref{Burgers_twist} also satisfies this property \textit{locally}; i.e.,
$(\bomega\times\bu)\cdot\bu=0$, whereas for smooth solutions to the NSE, one has only
\begin{align}\label{en_local_NSE}
    (\bu\cdot\nabla\bu + \nabla p)\cdot\bu
    =
    \nabla\cdot\pnt{(\tfrac12|\bu|^2+p)\bu}.
\end{align}
(Note again the appearance of the Bernoulli pressure.)  In the celebrated papers \cite{Caffarelli_Kohn_Nirenberg_1982,Duchon_Robert_2000}, the identity \eqref{en_local_NSE} played a central role in understanding local energy balance, and the definition of a \textit{suitable weak solution}\footnote{see the discussion in \cite{Guermond_2008_suitable} and the references therein for practical considerations regarding suitable weak solutions}.  It may be interesting to follow these in the context of \eqref{Burgers_twist} (or even \eqref{NSE_vor}).  However, note that for \eqref{Burgers_twist}, more regularity of the solution seems to be demanded, as there is no clear way to move all derivatives to test functions.
In Section \ref{sec_rotKSE}, we also propose a rotational modification of the Kuramoto-Sivashisnky (KSE) equations \eqref{KSE}.  Namely, we propose the following system:
\begin{align}\label{rotKSE_intro}
    \pd{\bu}{t}+\bomega\times\bu + \triangle\bu + \triangle^2\bu &= \mathbf{0}.
\end{align}
We think of this as a sort of ``Kuramoto-Sivashisnky with a twist.''
After describing this system in more detail in Section \ref{sec_rotKSE}, we prove its global well-posedness.  Showing the global well-posedness or finite-time blow-up of the original $n$-dimensional ($n\geq2$) KSE remains a long-standing open problem.
\section{Preliminaries}\label{secPre}
\noindent
In this section, we lay out some notation and preliminary notions that will be used later in the text.  We denote the $2\pi$-periodic box $\nT^n:=\nR^n/(2\pi\nZ)^n=[0,2\pi]^n$, and we denote its volume of the torus by $|\nT^n|=(2\pi)^n$.    We denote the set of  vector-valued real $L^2$ functions on $\nT^n$ by
\begin{align*}
L^2(\nT^n):=\set{
\bu\bigg|\bu(\bx)=\sum_{\bk\in\nZ^n}\widehat{\bu}_\bk e^{i\bk\cdot\bx},   \cnj{\widehat{\bu}_\bk} = \widehat{\bu}_{-\bk}, \text{ and }\sum_{\bk\in\nZ^n}|\widehat{\bu}_\bk|^2<\infty
}.
\end{align*}
Note that the equations discussed in this work typically do not conserve the mean of solutions, so there is no mean-free condition on these spaces. Indeed, formally integrating \eqref{Burgers_twist_adv_form}, we find only that
\[
\frac{d}{dt}\int_{\nT^3}\bu\,d\bx 
= 
\int_{\nT^3}(\nabla\cdot\bu)\bu\,d\bx 
+\int_{\nT^3}\bbf\,d\bx,
\]
indicating that, e.g., positive divergence may amplify the average (cf. \cite{Larios_Martinez_2024}).
We denote the (real) $L^2$ inner-product and $H^s$ Sobolev norm by
\[
 (\bu,\bv) := \sum_{i=1}^n\int_{\nT^n} u_i(\bx) v_i(\bx)\,d\bx,
\;\;
 \|\bu\|_{H^s} := \Big((2\pi)^n\!\!\sum_{\bk\in\nZ^n}(1+|\bk|^{2})^{s}|\widehat{\bu}_\bk|^{2}\Big)^{\frac12}\!\!,
\]
and the corresponding space $H^s\equiv H^s(\nT^n) = \set{\bu\in L^2(\nT^n)\big|\|\bu\|_{H^s}<\infty}$. We also denote the mean-free spaces
\[\textstyle
\dot{H}^s\equiv \dot{H}^s(\nT^n):=\set{\bu\in H^s(\nT^n)\Big|\frac{1}{|\nT^n|}\int_{\nT^n}\bbf\,d\bx=\hat{\bbf}_{\mathbf{0}}=\mathbf{0}}, \qquad \dot{L}^2:=\dot{H}^0.
\]
We denote the following action for negative spaces
\begin{align*}
\ip{\bbf}{\bu}\equiv\ip{\bbf}{\bu}_{H^{-1},H^1}&:=
(2\pi)^n\sum_{\bk\in\nZ^n}\hat{\bbf}_\bk\overline{\widehat{\bu}_\bk}.
\end{align*}
If $\bu\in H^1$, and $\bbf\in \dot{H}^{-1}$,
it holds from the mean-free property of $\bbf$ that
\begin{align}
  |\ip{\bbf}{\bu}| \notag
  &\leq (2\pi)^n\!\!\sum_{\substack{\bk\in\nZ^n\\\bk\neq \mathbf{0}}}\frac{|\hat{\bbf}_\bk|}{|\bk|}|\bk||\widehat{\bu}_\bk|
  \leq (2\pi)^n\Big(\sum_{\substack{\bk\in\nZ^n\\\bk\neq \mathbf{0}}}\frac{|\hat{\bbf}_\bk|^2}{|\bk|^2}\Big)^{\frac12}
\Big(\sum_{\substack{\bk\in\nZ^n\\\bk\neq \mathbf{0}}}|\bk|^2|\widehat{\bu}_\bk|^2\Big)^{\frac12}
\\&\leq \label{Hm1_bound}
   (2\pi)^n\Big(\sum_{\bk\in\nZ^n}\frac{2|\hat{\bbf}_\bk|^2}{1+|\bk|^2}\Big)^{\frac12}
\Big(\sum_{\bk\in\nZ^n}|\bk|^2|\widehat{\bu}_\bk|^2\Big)^{\frac12}
=
  \sqrt{2}\|\bbf\|_{H^{-1}}\|\nabla\bu\|_{L^2}.
\end{align}
For any $N\in\nN$ and $\bu\in H^s(\nT^n)$ given by $\bu(\bx)=\sum_{\bk\in\nZ^n}\widehat{\bu}_ke^{i\bk\cdot\bx}$, we denote the projection operator $P_N$ and its compliment $Q_N$ by
\[
 P_N \bu = \sum_{\substack{\bk\in\nZ^n\\|\bk|\leq N}}\widehat{\bu}_ke^{i\bk\cdot\bx},\qquad Q_N:=I-P_N.
\]
We recall the following well-known projection estimate for any $\bu\in H^{s}(\nT^n)$:
\begin{align}\label{PNproj}
 \|P_N\bu\|_{H^{s+r}}^2 
 \leq
 (2\pi)^n\sum_{\substack{\bk\in\nZ^n\\|\bk|\leq N}}(1+|\bk|^{2})^{s+r}|\widehat{\bu}_\bk|^{2}
 \leq
 (1+N^2)^{r}\|\bu\|_{H^{s}}^2.
\end{align}
and for any $\bu\in H^{s+r}(\nT^n)$:
\begin{align}\label{QNproj}
 \|Q_N\bu\|_{H^s}^2 
 \leq
 (2\pi)^n\sum_{\substack{\bk\in\nZ^n\\|\bk|> N}}\frac{|\bk|^{2r}}{N^{2r}}(1+|\bk|^{2})^{s}|\widehat{\bu}_\bk|^{2}
 \leq
 N^{-2r}\|\bu\|_{H^{s+r}}^2.
\end{align}
We denote the smallest positive eigenvalue of the negative Laplacian by $\lambda_1>0$.  Note that on $\nT^n$, $\lambda_1=1$, but it is a convenient notation as it carries units of $(\text{length})^{-2}$, so that $(\nu\lambda_1)^{-1}$ is a natural time scale.
We denote by $C$, $C_\nu$, $C_{\gamma,\nu}$, etc.  positive constants that may change from line to line.  
We recall Agmon's inequality for $0\leq s_1<\frac{n}{2}<s_2$, 
\begin{align}\label{agmon}
\|\bu\|_{L^\infty}\leq C\|\bu\|_{H^{s_1}}^{\theta}\| \bu \|_{H^{s_2}}^{1-\theta},
\end{align}
where $\frac{n}{2}=\theta s_1 + (1-\theta)s_2$, and Poincar\'e's inequality for mean-free functions, 
\begin{align}\label{poincare}
\int_{\nT^n}\bu\,d\bx=\mathbf{0}
\quad\Rightarrow\quad
\|\bu\|_{L^p}\leq C\|\nabla\bu\|_{L^p},\qquad 1\leq p\leq\infty.
\end{align}
However, as mentioned above, we cannot assume that solutions are mean-free, and hence we only apply Poincar\'e's inequality to derivatives of solutions.  We have the usual Gagliardo-Nirenberg-Sobolev inequalities:
\begin{align}\label{GNS_full}
\|\bu\|_{L^3(\nT^3)}\leq C\|\bu\|_{L^2(\nT^3)}^{1/2}\|\bu\|_{H^1(\nT^3)}^{1/2},
\quad
\text{and}
\quad
\|\bu\|_{L^6(\nT^3)}\leq C\|\bu\|_{H^1(\nT^3)},
\end{align}
but note carefully that the full $H^1$ norm appears on the right-hand side, since $\bu$ need not be mean-free.  However, the periodic boundary conditions imply that $\partial_\alpha\bu$ is mean-free whenever the multi-index $\alpha$ satisfies $|\alpha|\geq1$.  Hence, for $n=3$,
\begin{subequations}\label{GNS}
\begin{align}
\label{poincare_on_derivative}
|\alpha|\geq1\quad\Rightarrow\quad
 \|\partial_\alpha\bu\|_{L^p}
 &\leq 
 C\|\nabla\partial_\alpha\bu\|_{L^p}
 \quad(\text{for }1\leq p\leq\infty),
 \\
\label{GNS3}
|\alpha|\geq1\quad\Rightarrow\quad
 \|\partial_\alpha\bu\|_{L^3}
 &\leq 
 C\|\bu\|_{L^2}^{1/2}\|\nabla\partial_\alpha\bu\|_{L^2}^{1/2},
\\\label{GNS6}
|\alpha|\geq1\quad\Rightarrow\quad
\|\partial_\alpha\bu\|_{L^6}
&\leq 
C\|\nabla\partial_\alpha\bu\|_{L^2},
\end{align}
\end{subequations}
whenever the right-hand sides are finite.  We also note the following inequality holding in 3D, which is not new but recorded here for reference.
\begin{align}\label{H1_leq_H1_H2}
 \|uv\|_{H^1}
 &\leq 
 \|uv\|_{L^2} + \|u\nabla v\|_{L^2}  + \|(\nabla u) v\|_{L^2} 
 \\&\leq \notag
  \|u\|_{L^\infty}\|v\|_{L^2} + \|u\|_{L^\infty}\|\nabla v\|_{L^2}  + \|\nabla u\|_{L^6}  \|v\|_{L^3} 
\\&\leq \notag
  C\|u\|_{H^2}\|v\|_{H^1}
\end{align}
Replacing $\vect{B}$ by $\vect{B}+\vect{C}$ in  \eqref{cross_prod_ortho} yields the well-known triple product identity
\begin{align}\label{triple_product}
 (\vect{A}\times\vect{B})\cdot \vect{C}
=(\vect{C}\times\vect{A})\cdot\vect{B}
=(\vect{B}\times\vect{C})\cdot\vect{A}.
\end{align}
Identity \eqref{triple_product}, along with integration by parts, can be used to prove the following identity, which holds for smooth periodic vector fields $\bu,\bv,\bw$.
\begin{align}\label{ibp_Lamb}
    ((\nabla\times\bu)\times\bv,\bw)
    &=
    (\bv(\nabla\cdot\bw)-\bw (\nabla\cdot\bv)+(\bw\cdot\nabla)\bv- (\bv\cdot\nabla)\bw,\bu).
\end{align}
In particular, note that, unlike in the Navier-Stokes equations, there does not seem to be a way to move all the derivatives to, say, a test-function $\bw$.  This fact significantly complicates the analysis of \eqref{Burgers_twist} (see Remark \ref{remark_no_weak_Burgers_twist_sol}).
 \section{Global Well-Posedness for a Viscous Rotational Burgers Equation}\label{sec_rB_well_posedness}
  \noindent
In this section, we prove the global well-posedness of \eqref{Burgers_twist} with $\nu>0$.  
We take the boundary conditions to be periodic, and the initial data for this equation to be in $\bu_0\in H^1(\nT^n)$ ($n=2$ or $3$). 
The proof of local well-posedness for this model superficially follows similarly to that of the Navier-Stokes equations, but there are subtle, non-trivial differences due to the lack of the divergence-free condition.  
Namely, the lack of a divergence-free condition prevents one from passing the derivatives in the nonlinearity fully onto test functions, and as a result, the proof is more delicate than the standard proof of existence of solutions to the Navier-Stokes equations (see, e.g., Remark \ref{remark_no_weak_Burgers_twist_sol} below).  Moreover, solutions do not preserve the mean, and hence cannot be assumed to be mean-zero, meaning tools like the Poincar\'e inequality \eqref{poincare} are not available (however, \eqref{GNS_full} and \eqref{GNS} still hold).  Hence, we give the fully rigorous details of the proof.\footnote{The proof bears some similarity with that of proofs of existence of solutions to Burgers equation (see, e.g., \cite{Pooley_Robinson_2016}), but here, the vanishing of the nonlinear term can be exploited, leading to some differences in the proof.}  However, to avoid some technical difficulties, we assume $\bbf$ is mean-free so that \eqref{Hm1_bound} holds.
As we will see in the proof (see Remark \ref{remark_no_weak_Burgers_twist_sol}), a difficulty arises that does not appear in the Navier-Stokes equations, preventing the use of the standard methods for proving the existence of weak solutions.  Hence, we do not define or investigate a notion of weak solutions to \eqref{Burgers_twist}.  We begin with a definition of strong solutions.
\begin{definition}\label{def_strong_sol}
Let $\nu>0$, $T>0$, $\bu_0\in H^1(\nT^3)$, and $\bbf\in L^2(0,T;\dot{L}^2)$.  We say that $\bu$ is a \textit{strong solution} to \eqref{Burgers_twist} on the interval $[0,T]$ if $\bu\in C([0,T];H^1)\cap L^2(0,T;H^2)$, and  $\partial_t\bu\in L^2(0,T;L^2)$, and $\bu$ satisfies \eqref{Burgers_twist} in the sense of $L^2(0,T;L^2)$, with initial data satisfied in the sense of $C([0,T];H^1)$.
\end{definition}
\begin{theorem}[Local well-posedness: strong solutions]\label{thm_strong_local}
Given $\nu>0$, $T^*>0$, $\bu_0\in H^1(\nT^3)$, and  $\bbf\in L^2(0,T^*;\dot{L}^2)$, there exists a $T\in(0,T^*)$ and a unique strong solution to \eqref{Burgers_twist} on $[0,T]$.  Moreover, strong solutions depend continuously on the initial data in the sense of $C([0,T];H^1)\cap L^2(0,T;H^2)$, and the following energy balance holds for all $t\in[0,T]$:
\begin{align}\label{en_eq}
\|\bu(t)\|_{L^2}^2  + 2\nu\int_0^t\|\nabla\bu(s)\|_{L^2}^2\,ds
= \|\bu_0\|_{L^2}^2 + \int_0^t(\bbf(s),\bu(s))\,ds.
\end{align}
\end{theorem}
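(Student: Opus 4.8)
The plan is a Galerkin approximation argument: build finite-dimensional approximations $\bu_N$, derive uniform-in-$N$ \emph{a priori} bounds that are good on a short time interval, and pass to the limit by compactness. Concretely, consider the system $\partial_t\bu_N+P_N(\bomega_N\times\bu_N)=\nu\triangle\bu_N+P_N\bbf$, $\bomega_N=\nabla\times\bu_N$, with $\bu_N(0)=P_N\bu_0$ and $\bu_N(t)\in P_NL^2$. On $P_NL^2$ the nonlinearity is a quadratic polynomial, hence locally Lipschitz, so Picard--Lindel\"of gives a unique maximal $C^1$ solution. Pairing with $\bu_N$ and using \eqref{cross_prod_ortho} pointwise (so that $(P_N(\bomega_N\times\bu_N),\bu_N)=(\bomega_N\times\bu_N,\bu_N)=0$) gives the exact identity $\tfrac12\tfrac{d}{dt}\|\bu_N\|_{L^2}^2+\nu\|\nabla\bu_N\|_{L^2}^2=(\bbf,\bu_N)$; Young's inequality and Gr\"onwall then bound $\|\bu_N(t)\|_{L^2}$ by a constant $M$ depending only on $\|\bu_0\|_{L^2}$, $\|\bbf\|_{L^2(0,T^*;L^2)}$, $\nu$ uniformly on $[0,T^*]$ (so in particular $\bu_N$ exists on all of $[0,T^*]$), and also bound $\nu\int_0^{T^*}\|\nabla\bu_N\|_{L^2}^2$.

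The crux is the $H^1$ estimate. Pairing the Galerkin equation with $-\triangle\bu_N$ yields $\tfrac12\tfrac{d}{dt}\|\nabla\bu_N\|_{L^2}^2+\nu\|\triangle\bu_N\|_{L^2}^2=(\bomega_N\times\bu_N,\triangle\bu_N)-(\bbf,\triangle\bu_N)$. Since $\|D^2\bu_N\|_{L^2}=\|\triangle\bu_N\|_{L^2}$ on $\nT^3$ and each component of $\nabla\bu_N$ is mean-free, \eqref{GNS3} gives $\|\bomega_N\|_{L^3}\leq\|\nabla\bu_N\|_{L^3}\leq C\|\bu_N\|_{L^2}^{1/2}\|\triangle\bu_N\|_{L^2}^{1/2}$, while \eqref{GNS_full} gives $\|\bu_N\|_{L^6}\leq C\|\bu_N\|_{H^1}$ (the \emph{full} $H^1$ norm, as $\bu_N$ need not be mean-free). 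Hence $\|\bomega_N\times\bu_N\|_{L^2}\leq CM^{1/2}\|\triangle\bu_N\|_{L^2}^{1/2}\|\bu_N\|_{H^1}$, and after Young's inequality one obtains $\tfrac{d}{dt}\|\nabla\bu_N\|_{L^2}^2+\nu\|\triangle\bu_N\|_{L^2}^2\leq C_\nu M^2\|\bu_N\|_{H^1}^4+C_\nu\|\bbf\|_{L^2}^2$. Writing $y=\|\nabla\bu_N\|_{L^2}^2$ and $\|\bu_N\|_{H^1}^2\leq M^2+y$ produces a Riccati-type inequality $y'\leq C_\nu M^6+C_\nu M^2y^2+C_\nu\|\bbf\|_{L^2}^2$; since $y(0)\leq\|\nabla\bu_0\|_{L^2}^2$, this forces a uniform bound on $y$ on some interval $[0,T]$, $T\in(0,T^*)$, depending only on $\|\bu_0\|_{H^1}$, $\|\bbf\|$, $\nu$. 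Integrating in time then bounds $\bu_N$ in $L^\infty(0,T;H^1)\cap L^2(0,T;H^2)$ uniformly in $N$, and feeding this back into the equation (using $\|\bomega_N\times\bu_N\|_{L^2}^2\leq CM\|\triangle\bu_N\|_{L^2}\|\bu_N\|_{H^1}^2$, which is integrable on $[0,T]$) bounds $\partial_t\bu_N$ in $L^2(0,T;L^2)$.

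For passage to the limit, Banach--Alaoglu and the Aubin--Lions lemma ($H^2\hookrightarrow\hookrightarrow H^1\hookrightarrow L^2$) give a subsequence with $\bu_N\rightharpoonup\bu$ weakly-$*$ in $L^\infty(0,T;H^1)$ and weakly in $L^2(0,T;H^2)$, $\partial_t\bu_N\rightharpoonup\partial_t\bu$ in $L^2(0,T;L^2)$, and $\bu_N\to\bu$ strongly in $L^2(0,T;H^1)\cap C([0,T];L^2)$. Strong $L^2(0,T;H^1)$ convergence gives $\bomega_N\to\bomega:=\nabla\times\bu$ in $L^2(0,T;L^2)$ and $\bu_N\to\bu$ in $L^2(0,T;L^6)$, whence $\bomega_N\times\bu_N\to\bomega\times\bu$ in $L^1(0,T;L^{3/2})$, which is enough to pass to the limit in the equation distributionally; the bound $\|\bomega\times\bu\|_{L^2}^2\leq CM\|\triangle\bu\|_{L^2}\|\bu\|_{H^1}^2$ shows $\bomega\times\bu\in L^2(0,T;L^2)$, so \eqref{Burgers_twist} holds in $L^2(0,T;L^2)$. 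The Lions--Magenes lemma (using $\bu\in L^2(0,T;H^2)$, $\partial_t\bu\in L^2(0,T;L^2)$) upgrades $\bu$ to $C([0,T];H^1)$, and the initial condition is attained, so $\bu$ is a strong solution.

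Finally, uniqueness and continuous dependence follow from a difference estimate: for strong solutions $\bu,\bv$ with data $(\bu_0,\bbf)$, $(\bv_0,\bg)$, set $\bw=\bu-\bv$ and write $\bomega_\bu\times\bu-\bomega_\bv\times\bv=\bomega_\bu\times\bw+(\nabla\times\bw)\times\bv$; pairing with $\bw$ kills $(\bomega_\bu\times\bw,\bw)$ pointwise and leaves $\tfrac12\tfrac{d}{dt}\|\bw\|_{L^2}^2+\nu\|\nabla\bw\|_{L^2}^2=-((\nabla\times\bw)\times\bv,\bw)+(\bbf-\bg,\bw)$, with $|((\nabla\times\bw)\times\bv,\bw)|\leq\|\nabla\bw\|_{L^2}\|\bv\|_{L^\infty}\|\bw\|_{L^2}$ absorbed by Young, and $\|\bv\|_{L^\infty}^2\leq C\|\bv\|_{H^1}\|\bv\|_{H^2}\in L^2(0,T)$ by Agmon \eqref{agmon}; Gr\"onwall with this integrable coefficient gives $L^2$ continuous dependence and (with $\bw(0)=\mathbf 0$, $\bbf=\bg$) uniqueness. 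Running the same computation at the $H^1$ level (pairing $\bw$'s equation with $-\triangle\bw$, bounding the two nonlinear contributions by $\tfrac{\nu}{8}\|\triangle\bw\|_{L^2}^2$ plus $C_\nu\|\bu\|_{L^2}\|\triangle\bu\|_{L^2}\|\bw\|_{H^1}^2$ and $C_\nu\|\bv\|_{H^1}^4\|\bw\|_{L^2}^2$, both coefficients integrable in time) upgrades this to continuous dependence in $C([0,T];H^1)\cap L^2(0,T;H^2)$. The energy balance \eqref{en_eq} is obtained by pairing \eqref{Burgers_twist} with $\bu$ (legitimate since $\partial_t\bu,\bu\in L^2(0,T;L^2)$), using $(\partial_t\bu,\bu)=\tfrac12\tfrac{d}{dt}\|\bu\|_{L^2}^2$, $(\bomega\times\bu,\bu)=0$, $(\nu\triangle\bu,\bu)=-\nu\|\nabla\bu\|_{L^2}^2$, and integrating in time.

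I expect the $H^1$ \emph{a priori} estimate to be the main obstacle: because there is no divergence-free structure, the derivative in the nonlinearity cannot be moved onto test functions (\textit{cf.}\ Remark \ref{remark_no_weak_Burgers_twist_sol} and the discussion around \eqref{ibp_Lamb}), so the nonlinear term must be absorbed into the viscous $\|\triangle\bu_N\|_{L^2}^2$ dissipation, which only yields a Riccati inequality and hence local-in-time control; extra care is also needed throughout because $\bu_N$ is not mean-free, so Poincar\'e's inequality is available only for its derivatives.
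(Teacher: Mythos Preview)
Your proposal is correct and follows essentially the same Galerkin/energy-estimate/compactness strategy as the paper. The only notable organizational differences are that you pair with $-\triangle\bu_N$ directly for the $H^1$ estimate (whereas the paper differentiates first and uses $(\bomega^N\times\partial_\alpha\bu^N,\partial_\alpha\bu^N)=0$ to kill the top-order term before estimating), and you invoke Aubin--Lions once at the $H^2\hookrightarrow\hookrightarrow H^1$ level to obtain strong $L^2(0,T;H^1)$ convergence up front, which makes the limit passage in the nonlinearity and the energy equality slightly cleaner than the paper's two-stage argument; both routes yield the same Riccati-type local bound and the same conclusion.
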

\begin{proof} Assume the hypotheses.
For each $N\in\nN$, consider the following Galerkin ODE system based on \eqref{Burgers_twist}
\begin{subequations}
\begin{align}
\label{rotBurgers_visc_Gal}
\pd{\bu^N}{t}&= -P_N((\nabla\times\bu^N)\times\bu^N) + \nu\triangle\bu^N + P_N\bbf,
\\\label{rotBurgers_visc_Gal_IC}
\bu^N(0)&=P_N(\bu_0).
\end{align}
\end{subequations}
The coefficients of $\bu^N$ clearly form an ODE determined by \eqref{rotBurgers_visc_Gal} with quadratic (and hence locally-Lipschitz) right-hand side.  Therefore, for each $N\in\nN$, there exists a $T_N>0$ and a unique solution $\bu^N\in C^1([-T_N,T_N],C^\infty)$ to \eqref{rotBurgers_visc_Gal} by the Picard-Lindel\"of Theorem\footnote{Technically, since we only have $P_N\bbf\in L^2(0,T;C^\infty)$, one needs to apply the Carath\'eodory Theorem for existence and uniquness, which holds since $P_N\bbf(\bx,\cdot)\in L^1_{\text{loc}}(0,T)$ for each $\bx\in\nT^3$.}.  Taking an inner product of \eqref{rotBurgers_visc_Gal} and integrating by parts, we obtain
\begin{align}\label{rotBurgersL2}
&\quad
    \frac12\frac{d}{dt}\|\bu^N\|_{L^2}^2 + \nu\|\nabla\bu^N\|_{L^2}^2
    \\&= \notag
    -(P_N((\nabla\times\bu^N)\times\bu^N),\bu^N) + (P_N\bbf,\bu^N),
    \\&=\notag
    (-(\nabla\times\bu^N)\times\bu^N,\bu^N)+(\bbf,\bu^N)=(\bbf,\bu^N)
    \\&\notag
    \leq 
    \sqrt{2}\|\bbf\|_{H^{-1}}\|\nabla\bu^N\|_{L^2}
    \leq
    \tfrac{1}{\nu}\|\bbf\|_{H^{-1}}^2+\tfrac{\nu}{2}\|\nabla\bu^N\|^2,
\end{align}
where we used \eqref{cross_prod_ortho}, \eqref{Hm1_bound}, and the symmetry of $P_N$.  Hence, rearranging and integrating, we obtain
\begin{align}\label{rotBurgersL2integrated}
    \|\bu^N(t)\|_{L^2}^2 + \nu\int_0^t\|\nabla\bu^N(s)\|_{L^2}^2\,ds
    &\leq 
    \|\bu^N(0)\|_{L^2}^2 + 
    \tfrac{2}{\nu}\int_0^t\|\bbf(s)\|_{H^{-1}}^2\,ds
    \\&\leq \notag
    \|\bu_0\|_{L^2}^2 + 
    \tfrac{2}{\nu}\int_0^{T^*}\|\bbf(s)\|_{H^{-1}}^2\,ds.
\end{align}
Thus, $\|\bu^N(t)\|_{L^2}$ is bounded independently of time (and $N$), so the time of existence and uniqueness of the Galerkin solution can be extended to arbitrarily.  Moreover, for any $T\in(0,T^*)$, we obtain:
\begin{align}\label{Gal_bdd_L2_Burgers_visc}
    \set{\bu^N}_{N=1}^\infty\text{ is bounded in }L^\infty(0,T;L^2)\cap L^2(0,T;H^1).
\end{align}
By the Banach-Alaoglu Theorem, there exists $\bu\in L^\infty(0,T;L^2)\cap L^2(0,T;H^1)$ and a subsequence of $\set{\bu^N}_{N\in\nN}$ (which we relabel as $\set{\bu^N}_{N\in\nN}$ if necessary) such that
\begin{subequations}
\begin{align}\label{L2weak_conv_visc_Burgers}
 \bu^N \overset{*}{\rightharpoonup}\bu &\text{ in } L^\infty(0,T;L^2),
 \\\label{L2H1weak_conv_visc_Burgers}
 \bu^N\rightharpoonup \bu &\text{ in } L^2(0,T;H^1).
\end{align}
\end{subequations}
Moreover, for any $\bvphi\in L^4(0,T;H^1)$, using H\"older's inequality, \eqref{GNS}, and $\|P_N\bbf\|_{L^2}\leq \|\bbf\|_{L^2}$, we obtain
\begin{align}\label{time_der_est}
&\quad
    \ip{\partial_t\bu^N}{\bvphi}
    \\&=\notag
    -\ip{(\nabla\times\bu^N)\times\bu^N}{P_N\bvphi} -\nu(\nabla\bu^N,\nabla\bvphi) + (P_N\bbf,\bvphi)
    \\&\leq\notag
    \|\nabla\bu^N\|_{L^2}\|\bu^N\|_{L^3}\|P_N\bvphi\|_{L^6} +\nu\|\nabla\bu^N\|_{L^2}\|\nabla\bvphi\|_{L^2}
    +\|\bbf\|_{L^2}\|\bvphi\|_{L^2}
    \\&\leq\notag
    C\|\bu^N\|_{H^1}^{3/2}\|\bu^N\|_{L^2}^{1/2}\|\bvphi\|_{H^1} 
    +\nu\|\nabla\bu^N\|_{L^2}\|\bvphi\|_{H^1}
    +\|\bbf\|_{L^2}\|\bvphi\|_{L^2}
    \\&\leq\notag
    C\|\bu^N\|_{H^1}^{2}+\|\bu^N\|_{L^2}^{2}\|\bvphi\|_{H^1}^4
    +\nu\|\nabla\bu^N\|_{L^2}\|\bvphi\|_{H^1}
    +\|\bbf\|_{L^2}\|\bvphi\|_{L^2},
\end{align}
from which it follows easily by integrating \eqref{Gal_bdd_L2_Burgers_visc} on $[0,T]$ that 
\begin{align}\label{Gal_dt_bdd_Burgers_visc}
    \set{\partial_t\bu^N}_{N=1}^\infty\text{ is bounded in }L^{4/3}(0,T;H^{-1}).
\end{align}
The Banach-Alaoglu Theorem (and a standard ``uniqueness of limits'' argument, see, e.g., \cite{Constantin_Foias_1988,Temam_1995_Fun_Anal,Robinson_2001}) implies $\partial_t\bu\in L^{4/3}(0,T;H^{-1})$, and moreover,
\begin{align}\label{weak_conv_dt_Burgers_visc}
 \partial_t\bu^N \overset{*}{\rightharpoonup}\partial_t\bu &\text{ in } L^{4/3}(0,T;H^{-1}).
\end{align}
Thus, thanks to \eqref{Gal_bdd_L2_Burgers_visc} and \eqref{Gal_dt_bdd_Burgers_visc} and the Aubin-Lions Compactness Theorem, we obtain a subsequence (again relabeled as $\set{\bu^N}_{N\in\nN}$) such that
\begin{align}\label{strong_conv_visc_Burgers}
 \bu^N\rightarrow \bu &\text{ (strongly) in } C([0,T];L^2).
\end{align}
\begin{remark}\label{remark_no_weak_Burgers_twist_sol}
It might seem as though one should be able to pass to the limit in this case to obtain a weak solution, since we have all of the ``ingredients'' we need in the corresponding 3D Navier-Stokes case.  However, as discussed above, since our solutions are not required to be divergence-free, we lack the anti-symmetry of the nonlinear term that the Navier-Stokes equations posses, which allows one to integrate by parts and move derivatives entirely onto the test functions, so that strong convergence in $L^\infty([0,T],L^2)$ is all that is needed.  However, this does not seem to be the case for \eqref{Burgers_twist} (similar issues were also encountered in \cite{Pooley_Robinson_2016} in the context of  Burgers equation \eqref{Burgers} in dimension $n\geq2$). Hence, we must look for higher-order estimates before passing to the limit.  
\end{remark}
We now establish existence and uniqueness of strong solutions.  For simplicity of presentation, we take $\bbf\equiv\mathbf{0}$, but the results hold for non-zero $\bbf\in L^2(0,T;\dot{L}^2)$ as well by using integration by parts to move derivatives off of $\bbf$. 
Let $\alpha\in\nZ_{\geq0}^3$ be a multi-index. Let us also denote $\bomega^N:=\nabla\times\bu^N$.  
From Leibniz's product rule, we obtain
\begin{align}\label{Burgers_Liebniz}
\partial_t\partial_\alpha \bu^N
&=
-\sum_{\beta\leq\alpha}\binom{\alpha}{\beta}P_N((\partial_{\alpha-\beta}\bomega^N)\times\partial_{\beta}\bu^N) + \nu\triangle\partial_\alpha\bu^N.
\end{align}
We take the inner-product of \eqref{Burgers_Liebniz} with $\partial_\alpha\bu^N$ to obtain
\begin{align}
 \frac12\frac{d}{dt}\|\partial_\alpha \bu^N\|_{L^2}^2 + \nu\|\nabla\partial_\alpha\bu^N\|_{L^2}^2
 &= \notag
 -\sum_{\beta\leq\alpha}\binom{\alpha}{\beta}((\partial_{\alpha-\beta}\bomega^N)\times\partial_{\beta}\bu^N,\partial_\alpha \bu^N)
  \\&= \label{Burgers_alpha}
 -\sum_{\beta<\alpha}\binom{\alpha}{\beta}((\partial_{\alpha-\beta}\bomega^N)\times\partial_{\beta}\bu^N,\partial_\alpha \bu^N).
\end{align}
The second equality is due to the fact that $(\bomega^N\times\partial_{\alpha}\bu^N,\partial_\alpha \bu^N)=0$, which follows immediately from the cross-product orthogonality  \eqref{cross_prod_ortho}.  
For the case when $|\alpha|=1$ in \eqref{Burgers_alpha}, using \eqref{GNS_full}, \eqref{GNS3}, and the trivial fact that $|\nabla\times\bu|\leq |\nabla\bu|$, we estimate
\begin{align}
&\quad \label{H1_bound_twist}
 \frac12\frac{d}{dt}\|\partial_\alpha \bu^N\|_{L^2}^2
 + \nu\|\nabla\partial_\alpha\bu^N\|_{L^2}^2
\\ &= \notag
-((\partial_{\alpha}\bomega^N)\times\bu^N,\partial_\alpha \bu^N)
 \\&\leq \notag
 \|\partial_{\alpha}\bomega^N\|_{L^2}\|\bu^N\|_{L^6}\|\partial_\alpha\bu^N\|_{L^3}
   \\&\leq \notag
C\|\nabla\partial_\alpha\bu^N\|_{L^2}^{3/2}\|\bu^N\|_{H^1}\|\partial_\alpha\bu^N\|_{L^2}^{1/2}
  \\&\leq \notag
 \frac{\nu}{2}\|\nabla\partial_\alpha\bu^N\|_{L^2}^2+\frac{C}{\nu^3}
 \|\bu^N\|_{H^1}^4\|\partial_\alpha\bu^N\|_{L^2}^2.
\end{align}
Rearranging and then summing over all $\alpha$ with $|\alpha|=1$, and adding\footnote{Since we do not have the Poincar\'e inequality, this step is used to obtain the same quantity, namely $\|\bu^N\|_{H^1}$ on both sides of the inequality before applying Gr\"onwall's inequality.} the result to \eqref{rotBurgersL2} yields
\begin{align}\label{rotBurgersH1}
\frac{d}{dt}\|\bu^N\|_{H^1}^2
+ \nu\|\nabla\bu^N\|_{L^2}^2
 + \nu\|\nabla\bu^N\|_{H^1}^2\leq \frac{C}{\nu^3}
 \|\bu^N\|_{H^1}^6.
\end{align}
Dropping the viscous terms on the left, we obtain for arbitrary $\epsilon>0$,
\begin{align}\label{rotBurgersH1_drop}
\frac{d}{dt}(\|\bu^N\|_{H^1}^2+\epsilon)
 \leq \frac{C}{\nu^3}(\|\bu^N\|_{H^1}^2+\epsilon)^3.
\end{align}
Applying Gr\"onwall's inequality, it follows from the monotonicity of $x\mapsto(x^{-2}-K)^{-1/2}$ (for $0\leq K<x^{-2}$) and \eqref{rotBurgers_visc_Gal_IC} that
\begin{align}\label{Gronwall_with_eps}
\|\bu^N(t)\|_{H^1}^2 
&\leq 
\pnt{(\|\bu^N(0)\|_{H^1}^2+\epsilon)^{-2}-2C\nu^{-3} t}^{-1/2} - \epsilon
\\&\leq \notag 
\pnt{(\|\bu_0\|_{H^1}^2+\epsilon)^{-2}-2C\nu^{-3} t}^{-1/2} - \epsilon.
\end{align}
Note that the right-hand side is positive and monotonically increasing and continuous in $\epsilon>0$ over its domain of definition; 
hence we may take the limit as $\epsilon\rightarrow0^+$.  If $\|\bu_0\|_{H^1}=0$, the right-hand side of \eqref{Gronwall_with_eps} approaches zero for all $t\geq0$, hence $\bu^N\equiv\mathbf{0}$, $T^*>0$ can be chosen arbitrarily.  Otherwise, we obtain
\begin{align}\label{Gronwall_short_bound}
\|\bu^N(t)\|_{H^1}^2 
&\leq 
\pnt{\|\bu_0\|_{H^1}^{-4}-2C\nu^{-3} t}^{-1/2}.
\end{align}
Letting, e.g.,  \[T^*:=\nu^{3}/(4C\|\bu_0\|_{H^1}^{4}),\] we obtain a uniform bound on $\bu^N$ in $L^\infty(0,T^*;H^1)$.  
Using this after integrating \eqref{rotBurgersH1} on $[0,T^*]$, we obtain
\begin{align}\label{Gal_bdd_H1_Burgers_visc}
    \set{\bu^N}_{N=1}^\infty\text{ is bounded in }L^\infty(0,T^*;H^1)\cap L^2(0,T^*;H^2).
\end{align}
and hence by the Banach-Alaoglu Theorem,
\begin{subequations}\label{H1weak_conv_visc_Burgers}
   \begin{align}
 \bu^N \overset{*}{\rightharpoonup}\bu &\text{ in } L^\infty(0,T^*;H^1),
 \\
 \bu^N\rightharpoonup \bu &\text{ in } L^2(0,T^*;H^2).
\end{align} 
\end{subequations}
In order to show convergence to a strong solution of \eqref{Burgers_twist}, we note that, for any $\bvphi\in L^4(0,T^*;H^1)$, one may write
\begin{align}\label{nonlinear_conv_Burgers_visc}
    &\quad
    \ip{P_N(\bomega^N\times\bu^N)}{\bvphi} - \ip{\bomega\times\bu}{\bvphi}
    \\\notag&=
    \ip{\bomega^N\times(\bu^N-\bu)}{P_N\bvphi}
    +\ip{\bomega^N\times\bu}{P_N\bvphi-\bvphi}
    \\&\notag\quad
    +\ip{(\bomega^N-\bomega)\times\bu}{\bvphi}
    := I + II + III.\notag
\end{align}
For the first and second terms, we estimate
\begin{align}\label{nonlinear_conv_Burgers_visc_I_III}
    &\quad
    |I|+|II| = |\ip{\bomega^N\times(\bu^N-\bu)}{P_N\bvphi}|+|\ip{\bomega^N\times\bu}{P_N\bvphi-\bvphi}|
    \\&\leq\notag
    \|\bomega^N\|_{L^2}\|\bu^N-\bu\|_{L^3}\|P_N\bvphi\|_{L^6}
    +\|\bomega^N\|_{L^2}\|\bu\|_{L^3}\|P_N\bvphi-\bvphi\|_{L^6}
    \\\notag&\leq
    C\|\bu^N\|_{H^1}\|\bu^N-\bu\|_{L^2}^{1/2}
    \|\bu^N-\bu\|_{H^1}^{1/2}\|P_N\bvphi\|_{H^1}
    \\&\notag\quad+C\|\bu^N\|_{H^1}\|\bu\|_{L^2}^{1/2}\|\bu\|_{H^1}^{1/2}\|P_N\bvphi-\bvphi\|_{H^1}
    \\\notag&\leq
    C\|\bu^N\|_{H^1}\|\bu^N-\bu\|_{L^2}^{1/2}
    (\|\bu^N\|_{H^1}+\|\bu\|_{H^1})^{1/2}\|\bvphi\|_{H^1}
    \\&\notag\quad+C\|\bu^N\|_{H^1}\|\bu\|_{L^2}^{1/2}\|\bu\|_{H^1}^{1/2}\|P_N\bvphi-\bvphi\|_{H^1}.
\end{align}
Due to the bounds in \eqref{Gal_bdd_H1_Burgers_visc}, the strong convergence in \eqref{strong_conv_visc_Burgers}, and the trivial strong convergence of $P_N\bvphi$ to $\bvphi$ in $L^2$, $I$ and $II$ converge to zero in $L^\infty(0,T^*)$.  We view the third term $III$ as a type of product of $\bu$ and $\bvphi$ acting on $\bomega^N-\bomega$.  Denoting $(\bu\otimes\bvphi)_{i,j}=u_i\vphi_j$, we notice that in 3D,
\begin{align*}
    \|\bu\otimes\bvphi\|_{L^2}\leq \|\bu\|_{L^\infty}\|\bvphi\|_{L^2}\leq C\|\bu\|_{H^1}^{1/2}\|\bu\|_{H^2}^{1/2}\|\bvphi\|_{L^2}.
\end{align*}
Hence, \eqref{Gal_bdd_H1_Burgers_visc} implies that the product of any components of $\bu$ and $\bvphi$ satisfy $u_i \vphi_j\in L^4(0,T^*;L^2)\subset L^2(0,T^*;L^2)$ for each $i,j\in\set{1,2,3}$.  Thus, (after passing to a subsequence and relabeling as needed) the remaining term of \eqref{nonlinear_conv_Burgers_visc}; namely $III:=\ip{(\bomega^N-\bomega)\times\bu}{\bvphi}$ converges weakly in $L^2(0,T^*)$ due to the convergence in \eqref{L2weak_conv_visc_Burgers}.  Finally, the weak convergence in \eqref{L2H1weak_conv_visc_Burgers} implies that $\triangle\bu^N\rightharpoonup\triangle\bu$ weakly in $L^2(0,T^*;H^{-1})$.  Hence, passing to a weak-$*$ limit of \eqref{rotBurgers_visc_Gal} in $L^{4/3}(0,T^*;H^{-1})$, we see that $\bu$ satisfies \eqref{Burgers_twist} in the sense of $L^{4/3}(0,T;H^{-1})$.  However, for uniqueness, we need more regularity on the time derivative.  We re-estimate \eqref{time_der_est}, this time\footnote{We could have assumed $\bvphi\in L^2(0,T^*;L^2)$ from the start, but this is not needed for existence, so the estimate is carried out in this weaker form for the sake of generality.} assuming only $\bvphi\in L^2(0,T^*;L^2)$, to obtain
\begin{align}\label{time_der_est2}
    \ip{\partial_t\bu^N}{\bvphi}
    &=
    -\ip{(\nabla\times\bu^N)\times\bu^N}{P_N\bvphi} +\nu(\triangle\bu^N,\bvphi)
    \\&\leq\notag
    \|\nabla\times\bu^N\|_{L^3}\|\bu^N\|_{L^6}\|P_N\bvphi\|_{L^2} +\nu\|\triangle\bu^N\|_{L^2}\|\bvphi\|_{L^2}
    \\&\leq\notag
    C\|\bu^N\|_{H^1}^{3/2}\|\bu^N\|_{H^2}^{1/2}\|\bvphi
    \|_{L^2} +\nu\|\triangle\bu^N\|_{L^2}\|\bvphi\|_{L^2}.
\end{align}
Hence, from \eqref{H1weak_conv_visc_Burgers}, it follows that 
\begin{align}\label{Gal_dt_bdd_Burgers_visc_L2}
    \set{\partial_t\bu^N}_{N=1}^\infty\text{ is bounded in }L^{2}(0,T^*;L^2),
\end{align}
and thus 
\begin{align}\label{dudt_bdd_L2L2}
    \partial_t\bu\in L^2(0,T^*;L^2),
\end{align} and \eqref{Burgers_twist} is actually satisfied in the stronger sense of $L^2(0,T^*;L^2)$.  Moreover, the Aubin-Lions Theorem together with  \eqref{Gal_bdd_H1_Burgers_visc} and \eqref{Gal_dt_bdd_Burgers_visc_L2} imply that
\begin{align}
\bu\in C([0,T^*];H^1)\cap L^2(0,T^*;H^2).
\end{align}
Integrating \eqref{rotBurgers_visc_Gal} on $[0,t]$ for $t<T^*$ and passing to the limit as $N\rightarrow\infty$ shows that $\bu(0)=\bu_0$.
Hence, we have established the existence of strong solutions as in Definition \ref{def_strong_sol} with $T=T^*$.
To show that strong solutions are unique, suppose that $\bu,\bv$ are strong solutions with initial data $\bu_0,\bv_0\in H^1$ respectively.  Denoting $\bw =  \bu-\bv$,
\begin{align}\label{Burgers_twist_visc_diff}
 \bw_t + (\nabla\times\bu)\times\bw + (\nabla\times\bw)\times\bv &= \nu\triangle\bw.
\end{align}
We take the inner-product of \eqref{Burgers_twist_visc_diff} with $\bw\in C(0,T^*;H^1)\cap L^2(0,T^*;H^2)$.  Using 
\eqref{cross_prod_ortho} and the Lions-Magenes Lemma (which is justified thanks to \eqref{dudt_bdd_L2L2}), we obtain a.e. on $[0,T^*]$, 
\begin{align*}
&\quad
  \frac12\frac{d}{dt}\|\bw\|_{L^2}^2 + \nu\|\nabla\bw\|_{L^2}^2 
  =  
 - ((\nabla\times\bw)\times\bv,\bw)
  \\&\leq
  \|\nabla\times\bw\|_{L^2}\|\bv\|_{L^6}\|\bw\|_{L^3}
  \leq
  C\|\nabla\bw\|_{L^2}\|\bv\|_{H^1}\|\bw\|_{L^2}^{1/2}\|\bw\|_{H^1}^{1/2}
  \\&\leq
   C\|\nabla\bw\|_{L^2}\|\bv\|_{H^1}\|\bw\|_{L^2}
  +C\|\nabla\bw\|_{L^2}^{3/2}\|\bv\|_{H^1}\|\bw\|_{L^2}^{1/2}
    \\&\leq
  \tfrac{\nu}{2}\|\nabla\bw\|_{L^2}^2 + C_\nu\|\bv\|_{H^1}^{2}\|\bw\|_{L^2}^{2} + C_\nu\|\bv\|_{H^1}^{4}\|\bw\|_{L^2}^{2}.
\end{align*}
using \eqref{GNS} and the simple inequality $\|\bw\|_{H^1}^{1/2}\leq \|\bw\|_{L^2}^{1/2}+\|\nabla\bw\|_{L^2}^{1/2}$.  
Thus,
\begin{align}\label{Burgers_twist_visc_diff_L2}
 \frac{d}{dt}\|\bw\|_{L^2}^2 + \nu\|\nabla\bw\|_{L^2}^2
 \leq
   C_\nu(\|\bv\|_{H^1}^{2}+\|\bv\|_{H^1}^{4})\|\bw\|_{L^2}^{2}.
\end{align}
Since $\bv\in L^\infty(0,T;H^1)$, Gr\"onwall's inequality implies that
\begin{align}\label{Burgers_twist_unique_Gronwall_L2}
 \|\bw(t)\|_{L^2}^2 + \nu\int_0^t\|\nabla\bw(s)\|_{L^2}^2e^{C_\nu\int_\tau^t(\|\bv(s)\|_{H^1}^{2}+\|\bv(s)\|_{H^1}^{4})\,ds}\,d\tau
 \leq
 \|\bw(0)\|_{L^2}^2.
\end{align}
Setting $\bu_0=\bv_0$, it follows that strong solutions are unique.\footnote{If one could establish the existence of weak solutions, by taking $\bv$ to be a strong solution, \eqref{Burgers_twist_unique_Gronwall_L2} would be enough to show that strong solutions are unique among the class of weak solutions.}  Continuous dependence on initial data in the sense of $C([0,T^*];L^2)\cap L^2(0,T^*;H^1)$ also follows from \eqref{Burgers_twist_unique_Gronwall_L2}.  To show higher-order continuous dependence on initial data, we take the action of \eqref{Burgers_twist_visc_diff} on $-\triangle\bw\in L^\infty(0,T^*;H^{-1})\cap L^2(0,T^*;L^2)$ and again use the Lions-Magenes Lemma (which is justified, since $\nabla\bu_t\in L^2(0,T^*;H^{-1})$ and $\nabla\bu\in L^2(0,T^*;H^{1})$) to obtain a.e. on $[0,T^*]$, 
\begin{align*}
&\quad
  \frac12\frac{d}{dt}\|\nabla\bw\|_{L^2}^2 + \nu\|\triangle\bw\|_{L^2}^2 
  =  
 ((\nabla\times\bu)\times\bw,\triangle\bw) + ((\nabla\times\bw)\times\bv,\triangle\bw)
  \\&\leq
  \|\nabla\bu\|_{L^6}\|\bw\|_{L^3}\|\triangle\bw\|_{L^2}
  +
  \|\nabla\bw\|_{L^2}\|\bv\|_{L^\infty}\|\triangle\bw\|_{L^2}
  \\&\leq
  C\|\bu\|_{H^2}\|\bw\|_{H^1}\|\triangle\bw\|_{L^2}
  +
  C\|\nabla\bw\|_{L^2}\|\bv\|_{H^2}\|\triangle\bw\|_{L^2}
  \\&\leq
  C_\nu(\|\bu\|_{H^2}^2 + \|\bv\|_{H^2}^2)\|\bw\|_{H^1}^2
  +
    \frac{\nu}{2}\|\triangle\bw\|_{L^2}^2.
\end{align*}
Rearranging this and adding the result to \eqref{Burgers_twist_visc_diff_L2} yields
\begin{align}\label{Burgers_twist_visc_diff_H1}
 \frac{d}{dt}\|\bw\|_{H^1}^2 + \nu\|\nabla\bw\|_{H^1}^2
 \leq
   C_\nu(\|\bu\|_{H^2}^2 + \|\bv\|_{H^2}^2+\|\bv\|_{H^1}^{4})\|\bw\|_{L^2}^{2}.
\end{align}
Since $\bu,\bv\in C([0,T^*];H^1)\cap L^2(0,T^*;H^2)$ and $\bu_0,\bv_0\in H^1$, Gr\"onwall's inequality yields continuous dependence on the initial data in the sense of $C([0,T^*];H^1)\cap L^2(0,T^*;H^2)$ of strong solutions, as desired.  Finally, taking the inner-product of  \eqref{rotBurgers_visc_Gal} with $\bu^N$, integrating in time, we obtain 
\begin{align}\label{en_eq_Gal}
\|\bu^N(t)\|_{L^2}^2  + 2\nu\int_0^t\|\nabla\bu^N(s)\|_{L^2}^2\,ds
= \|P_N\bu_0\|_{L^2}^2 + \int_0^t(\bbf(s),\bu^N(s))\,ds.
\end{align}
and noting that \eqref{time_der_est2} and \eqref{Gal_bdd_H1_Burgers_visc} together with the Aubin-Lions Theorem imply that (passing to a subsequence if necessary)
\begin{align*}
    \bu^N\rightarrow\bu \text{ in }L^2(0,T^*;H^1).
\end{align*}
This strong convergence is enough to pass to the limit as $N\rightarrow\infty$ in \eqref{en_eq_Gal} for a.e. $t\in[0,T^*]$, and $\bu\in C([0,T^*];H^1)$ implies \eqref{en_eq} for all $t\in[0,T^*]$.
\end{proof}
 \subsection{Higher-order estimates}\label{sec_higher_order}
In this section, we show that strong solutions enjoy some high-order regularity properties.  These will be necessary later for establishing global well-posedness.
\begin{theorem}\label{thm_Hm_bounds}
 Given $\bu_0\in H^1$, $m\in\nN$, $m\geq1$, and $\bbf\in L^2(0,T^*;\dot{H}^{m-1})$, let $\bu\in C(0,T^*;H^1)\cap L^2(0,T^*;H^2)$ be the unique strong solution given by Theorem \ref{thm_strong_local}, for some $T^*>0$.  Then, for any $\epsilon\in(0,T^*)$, 
\begin{align}
    \bu\in L^\infty(\epsilon,T^*;H^m)\cap L^2(\epsilon,T^*;H^{m+1}).
\end{align}
and moreover, in the case that $m\geq 5$, if we assume in addition that $\pd{\bbf}{t}\in L^2(0,T; \dot{H}^2)$, then
\begin{align}
    \bu\in H^1(\epsilon,T^*;H^4)\cap H^2(\epsilon,T^*;H^{2});
\end{align}
in particular, $\bu$ is a classical solution on $[\epsilon,T^*]$; that is, 
\begin{align}
    \bu\in C^0([\epsilon,T^*];C^2)\cap C^1([\epsilon,T^*];C^0).
\end{align}
\end{theorem}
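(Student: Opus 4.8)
The plan is a parabolic bootstrap: climb the Sobolev ladder one derivative at a time via energy estimates on spatial derivatives of \eqref{Burgers_twist}, then convert spatial regularity into temporal regularity by differentiating the equation in $t$. As in the proof of Theorem~\ref{thm_strong_local}, the formal derivative estimates will be carried out on the Galerkin approximations $\bu^N$ of \eqref{rotBurgers_visc_Gal} (which are smooth in $\bx$, so all spatial differentiations are legitimate), derived uniformly in $N$, and transferred to $\bu$ by lower semicontinuity of the norms under the weak-$*$ limits already extracted in Theorem~\ref{thm_strong_local}. Since only $\bu_0\in H^1$ is available, the estimates near $t=0$ will be closed either by inserting a power of $t$ or, equivalently, by running Gr\"onwall from a time $t_0>0$ at which $\bu(t_0)$ already has the regularity being propagated. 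As usual, one may take $\bbf\equiv\mathbf 0$ and restore the force at the end by integrating the derivatives off of $\bbf$ (legitimate since $\partial_\alpha\bbf$ is mean-free for $|\alpha|\ge 1$), which costs precisely the $\dot H^{m-1}$-regularity assumed.

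For the induction, say that \emph{level $k$} holds if $\bu\in L^\infty(\epsilon,T^*;H^k)\cap L^2(\epsilon,T^*;H^{k+1})$ for every $\epsilon\in(0,T^*)$; level $1$ is the content of Theorem~\ref{thm_strong_local} (and moreover $M_1:=\sup_{[0,T^*]}\|\bu\|_{H^1}<\infty$). Assuming level $k$, I fix a multi-index $\alpha$ with $|\alpha|=k+1$, apply $\partial_\alpha$ to \eqref{rotBurgers_visc_Gal}, pair with $\partial_\alpha\bu^N$, and expand the nonlinearity by Leibniz's rule. The decisive point is the same cancellation that drives Theorem~\ref{thm_strong_local}: the top-order term $(\bomega^N\times\partial_\alpha\bu^N,\partial_\alpha\bu^N)$ vanishes pointwise by \eqref{cross_prod_ortho} (exactly as in \eqref{Burgers_alpha}), so only the terms $((\partial_{\alpha-\beta}\bomega^N)\times\partial_\beta\bu^N,\partial_\alpha\bu^N)$ with $\mathbf 0\le\beta<\alpha$ remain; the single genuinely top-order one, $\beta=\mathbf 0$, carries $k+2$ derivatives on a factor, so I integrate by parts once (or invoke \eqref{ibp_Lamb}), after which the offending derivative either disappears or lands on $\nabla\partial_\alpha\bu^N$ and is absorbed into the dissipation. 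Estimating each resulting trilinear term with H\"older together with the Gagliardo--Nirenberg, product, and Agmon inequalities \eqref{GNS_full},\eqref{GNS},\eqref{H1_leq_H1_H2},\eqref{agmon}, then Young's inequality, and summing over $|\alpha|=k+1$, yields a differential inequality of the schematic form
\[
\frac{d}{dt}\|\bu^N\|_{H^{k+1}}^2+\nu\|\bu^N\|_{H^{k+2}}^2 \;\le\; g_k(t)\,\|\bu^N\|_{H^{k+1}}^2 + h_k(t),
\]
where $g_k$ is a polynomial in $\|\bu\|_{H^k}$ times at most one factor of $\|\bu\|_{H^{k+1}}$ (the extra factor being forced only when $k\le2$, since $H^k(\nT^3)\not\hookrightarrow W^{1,\infty}$ there), and $h_k$ is a polynomial in $\|\bu\|_{H^k}$ plus a constant multiple of $\|\bbf\|_{H^k}^2$; by level $k$ and $\bbf\in L^2(0,T^*;\dot H^{m-1})$ with $k\le m-1$, both $g_k$ and $h_k$ belong to $L^1(\epsilon',T^*)$ for every $\epsilon'>0$. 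Level $k$ also gives $\bu(t_0)\in H^{k+1}$ for a.e.\ $t_0$; choosing such a $t_0\in(\epsilon/2,\epsilon)$, a local-existence argument at the $H^{k+1}$ level (verbatim the argument of Theorem~\ref{thm_strong_local} with more derivatives) together with a continuation argument based on the displayed inequality shows that the resulting bound propagates to $T^*$, giving $\bu\in L^\infty(\epsilon,T^*;H^{k+1})\cap L^2(\epsilon,T^*;H^{k+2})$ and hence level $k+1$. Iterating to $k=m-1$ proves the first assertion.

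For the second assertion, since $m\ge 5$ the first assertion gives in particular $\bu\in L^\infty(\epsilon,T^*;H^5)\cap L^2(\epsilon,T^*;H^6)$ for every $\epsilon$, and $\bbf\in L^2(0,T^*;\dot H^4)$. From $\partial_t\bu=\nu\triangle\bu-\bomega\times\bu+\bbf$ one reads off $\partial_t\bu\in L^2(\epsilon,T^*;H^4)$ (the Laplacian term from $\bu\in L^2(\epsilon,T^*;H^6)$, the nonlinearity from the product estimates with $\bomega\in L^\infty(\epsilon,T^*;H^4)$ and $\bu\in L^\infty(\epsilon,T^*;H^5)$), i.e.\ $\bu\in H^1(\epsilon,T^*;H^4)$. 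Differentiating once more, $\partial_t^2\bu=\nu\triangle\partial_t\bu-(\nabla\times\partial_t\bu)\times\bu-\bomega\times\partial_t\bu+\partial_t\bbf$, and using $\partial_t\bu\in L^2(\epsilon,T^*;H^4)$, $\bu\in L^\infty(\epsilon,T^*;H^5)$, the product estimates, and the hypothesis $\partial_t\bbf\in L^2(0,T;\dot H^2)$, every term lies in $L^2(\epsilon,T^*;H^2)$, so $\bu\in H^2(\epsilon,T^*;H^2)$. Finally, the Aubin--Lions and Lions--Magenes theorems upgrade $\bu\in L^\infty(\epsilon,T^*;H^5)$ with $\partial_t\bu\in L^2(\epsilon,T^*;H^4)$ to $\bu\in C([\epsilon,T^*];H^4)$, and $\partial_t\bu\in L^2(\epsilon,T^*;H^4)$ with $\partial_t^2\bu\in L^2(\epsilon,T^*;H^2)$ to $\partial_t\bu\in C([\epsilon,T^*];H^2)$; since $H^4(\nT^3)\hookrightarrow C^2$ and $H^2(\nT^3)\hookrightarrow C^0$, this gives the stated classical regularity.

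I expect the main obstacle to be the interpolation bookkeeping at the low levels $k=1,2$: in three dimensions $H^1(\nT^3)\not\hookrightarrow L^\infty$ and $H^2(\nT^3)\not\hookrightarrow W^{1,\infty}$, so the $\beta=\mathbf 0$ term cannot be dispatched by a crude $L^2\times L^\infty\times L^2$ split, and one must tune the Gagliardo--Nirenberg exponents so that the Gr\"onwall coefficient $g_k$ comes out time-integrable on $(\epsilon',T^*)$ --- using precisely that $\|\bu\|_{H^1}$ is bounded on $[0,T^*]$ while $\|\bu\|_{H^2}$ and $\|\bu\|_{H^3}$ are square-integrable in time there. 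The levels $k\ge3$ are routine, since then $\|\bu\|_{W^{1,\infty}}\lesssim\|\bu\|_{H^k}$ and the nonlinear estimate decouples cleanly as a product $\|\bu\|_{H^{k+1}}\cdot\|\bu\|_{W^{1,\infty}}\cdot\|\bu\|_{H^{k+1}}$ plus strictly lower-order terms. The only other point requiring care is the (standard) justification that these formal derivative estimates survive the Galerkin limit and the insertion of the time weights, which proceeds as in Theorem~\ref{thm_strong_local}.
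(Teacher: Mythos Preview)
Your proposal is correct and follows essentially the same route as the paper: Galerkin approximations, Leibniz expansion of $\partial_\alpha(\bomega^N\times\bu^N)$, the cross-product cancellation \eqref{cross_prod_ortho} to kill the $\beta=\alpha$ term, Gagliardo--Nirenberg estimates on the remaining commutator terms, a Gr\"onwall argument started at a positive time $t_0$ where $\bu(t_0)\in H^{k+1}$ (supplied by the $L^2(\epsilon',T^*;H^{k+1})$ bound at level $k$), induction on $k$, and finally reading $\partial_t\bu$ and $\partial_t^2\bu$ directly from the equation once $m\ge 5$ spatial regularity is in hand. One small cosmetic difference: for the $\beta=\mathbf 0$ term the paper does not integrate by parts at all but simply uses $\|\partial_\alpha\bomega^N\|_{L^2}\le\|\nabla\partial_\alpha\bu^N\|_{L^2}$ and an $L^2\times L^\infty\times L^2$ split (with Agmon \eqref{agmon} for the $L^\infty$ factor when $k\le 2$), absorbing directly into the dissipation; your route via \eqref{ibp_Lamb} works equally well and leads to the same differential inequality.
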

\begin{proof}
We again take $\bbf\equiv0$ for simplicity of presentation, but the general results hold by using a suitable integration by parts for general $\bbf$ satisfying the hypotheses.  Let $\epsilon\in(0,T^*)$ be given. 
We rewrite \eqref{Burgers_alpha} in the form
\begin{align}\label{alpha3est}
&\quad
 \frac12\frac{d}{dt}\|\partial_\alpha \bu^N\|_{L^2}^2
 + \nu\|\nabla\partial_\alpha\bu^N\|_{L^2}^2
 \\&\notag=
-((\partial_\alpha\bomega^N)\times\bu^N,\partial_\alpha \bu^N)
\\&\notag\qquad
-\sum_{0<\beta<\alpha}\binom{\alpha}{\beta}
((\partial_{\alpha-\beta}\bomega^N)\times\partial_{\beta}\bu^N,\partial_\alpha \bu^N).
\end{align}
The cases $|\alpha|\leq1$ were handled in \eqref{H1_bound_twist}.  In the case $|\alpha|=2$, we estimate
\begin{align*}
&\quad
 \frac12\frac{d}{dt}\|\partial_\alpha \bu^N\|_{L^2}^2
 + \nu\|\nabla\partial_\alpha\bu^N\|_{L^2}^2
 \\&\leq
 \|\partial_\alpha\bomega^N\|_{L^2} \|\bu^N\|_{L^\infty}\|\partial_\alpha\bu^N\|_{L^2}
 \\&\qquad+
 \sum_{0<\beta<\alpha}\binom{\alpha}{\beta}
 \|\partial_{\alpha-\beta}\bomega^N\|_{L^2} \|\partial_\beta\bu^N\|_{L^3}\|\partial_\alpha\bu^N\|_{L^6}
  \\&\leq
 C\|\nabla\partial_\alpha\bu^N\|_{L^2}\|\bu^N\|_{H^1}^{1/2} 
 \|\bu^N\|_{H^2}^{3/2}
 +
 C\|\bu^N\|_{H^2}^{3/2} 
 \|\bu^N\|_{H^1}^{1/2}\|\nabla\partial_\alpha\bu^N\|_{L^2}
   \\&\leq
   \frac{\nu}{2}\|\nabla\partial_\alpha\bu^N\|_{L^2}^2
   +
 C_\nu \|\bu^N\|_{H^1}\|\bu^N\|_{H^2}^3,
\end{align*}
where we used \eqref{GNS}.  Rearranging, we obtain
\begin{align}
 \frac{d}{dt}\|\partial_\alpha \bu^N\|_{L^2}^2
 + \nu\|\nabla\partial_\alpha\bu^N\|_{L^2}^2
   \leq
 C_\nu \|\bu^N\|_{H^1}\|\bu^N\|_{H^2}^3.
\end{align}
Summing over all $\alpha$ with $|\alpha|=2$, and then adding the result to \eqref{rotBurgersH1}, and dropping some lower-order viscous terms on the left, we obtain
\begin{align}\label{Gal_H2_twist}
 \frac{d}{dt}\|\bu^N\|_{H^2}^2
 + \nu\|\nabla\bu^N\|_{H^2}^2
   \leq
 C_\nu (\|\bu^N\|_{H^1}\|\bu^N\|_{H^2})\|\bu^N\|_{H^2}^2
 + 
 C_\nu\|\bu^N\|_{H^1}^6.
\end{align}
By the bounds in \eqref{Gal_bdd_H1_Burgers_visc}, the second term on the right is bounded, and for the first term, the coefficient $\|\bu^N\|_{H^1}\|\bu^N\|_{H^2}$ is integrable, hence it is tempting to apply Gr\"onwall's inequality on $[0,T^*]$, but this would require $\bu_0\in H^2$. 
However, due to \eqref{Gal_bdd_H1_Burgers_visc}, the set $\bigcup_{N\in\nN}\set{t\in(0,T^*):\|\bu^N_t(t)\|_{H^2}=\infty}$ has measure zero.  Hence, for any $\delta>0$, there exists a point $t_*\in(0,\delta)$ such that $\|\bu^N(t_*)\|_{H^2}<\infty$.   
Applying Gr\"onwall's inequality to \eqref{Gal_H2_twist} on the interval $[t_*,T^*]$ and using \eqref{H1weak_conv_visc_Burgers} implies 
\begin{align}\label{statement_H2_bdd_twist}
    \set{\bu^N}_{N=1}^\infty\text{ is bounded in }L^\infty(\epsilon,T^*;H^2)\cap L^2(\epsilon,T^*;H^3),
\end{align}
since we can choose $t_*<\epsilon$.  
Returning to \eqref{alpha3est}, we estimate  for $|\alpha|\geq3$,
\begin{align*}
|((\partial_\alpha\bomega^N)\times\bu^N,\partial_\alpha \bu^N)|
&\leq
\|\partial_\alpha\bomega^N\|_{L^2}\|\bu^N\|_{L^\infty}\|\partial_\alpha \bu^N\|_{L^2}
\\&\leq
C\|\nabla\partial_\alpha\bu^N\|_{L^2}\|\bu^N\|_{H^2}\|\partial_\alpha\bu^N\|_{L^2}
\\&\leq
\frac{\nu}{4}\|\nabla\partial_\alpha\bu^N\|_{L^2}^{2}
+
C_\nu\|\bu^N\|_{H^2}^2\|\partial_\alpha\bu^N\|_{L^2}^2.
\end{align*}
We estimate the second part of \eqref{alpha3est} (using $1\leq|\alpha-\beta|,|\beta|\leq m-1$) by
\begin{align}
&\quad
    \sum_{0<\beta<\alpha}\binom{\alpha}{\beta}
|((\partial_{\alpha-\beta}\bomega^N)\times\partial_{\beta}\bu^N,\partial_\alpha \bu^N)|
\\\notag&\leq
   \sum_{0<\beta<\alpha}\binom{\alpha}{\beta}
\|\partial_{\alpha-\beta}\bomega^N\|_{L^2}\|\partial_{\beta}\bu^N\|_{L^3}\|\partial_\alpha \bu^N\|_{L^6}
\\\notag&\leq
   C\sum_{0<\beta<\alpha}\binom{\alpha}{\beta}
\|\partial_{\alpha-\beta}\nabla\bu^N\|_{L^2}\|\partial_{\beta}\bu^N\|_{L^2}^{1/2}\|\nabla\partial_{\beta}\bu^N\|_{L^2}^{1/2}\|\nabla\partial_\alpha \bu^N\|_{L^2}
\\\notag&\leq
C\|\bu^N\|_{H^{m-1}}^{1/2}\|\bu^N\|_{H^m}^{3/2}\|\nabla\partial_\alpha \bu^N\|_{L^2}
\\\notag&\leq
\frac{\nu}{4}\|\nabla\partial_\alpha \bu^N\|_{L^2}^2
+C_\nu\|\bu^N\|_{H^{m-1}}\|\bu^N\|_{H^m}^{3}.
\end{align}
Thus, rearranging and then summing all the above estimates 
over $\alpha$, $|\alpha|\leq m$, we obtain for $m\geq3$,
\begin{align}
\frac{d}{dt}\|\bu^N\|_{H^m}^2
+\nu\|\nabla\bu^N\|_{H^{m}}^2
\leq C_\nu(\|\bu^N\|_{H^2} + \|\bu^N\|_{H^{m-1}})\|\bu^N\|_{H^m}^3.
\end{align}
Following similar steps to the derivation of statement \eqref{statement_H2_bdd_twist}, and then using induction on $m\in\nN$ (and in particular, the integrability of the factor $(\|\bu^N\|_{H^2} + \|\bu^N\|_{H^{m-1}})\|\bu^N\|_{H^m}$ on $(\epsilon,T^*)$), we obtain for and $\epsilon\in(0,T^*)$,
\begin{align}\label{Gal_bdd_Hm_Burgers_visc}
    \set{\bu^N}_{N=1}^\infty\text{ is bounded in }L^\infty(\epsilon,T^*;H^{m})\cap L^2(\epsilon,T^*;H^{m+1}).
\end{align}
Next, we follow the approach of \cite{Pooley_Robinson_2016} for establishing classical solutions. Recall that $H^m(\nT^n)$ is an algebra whenever $m>n/2$.  Thus, \eqref{rotBurgers_visc_Gal} yields
\begin{align}\label{twist_ut_H4}
\|\bu_t^N\|_{H^4}
\leq 
\|\nabla\times\bu\|_{H^4}\|\bu\|_{H^4} + \nu\|\bu\|_{H^6}.
\end{align}
Squaring and integrating \eqref{twist_ut_H4} on $(\epsilon,T^*)$, and letting $m=5$ in \eqref{Gal_bdd_Hm_Burgers_visc}, we obtain for any $\epsilon\in(0,T^*)$,
\begin{align}
    \set{\bu^N_t}_{N=1}^\infty\text{ is bounded in }L^2(\epsilon,T^*;H^4).
\end{align}
Next, taking the derivative in $t$ with \eqref{rotBurgers_visc_Gal}, we obtain
\begin{align}\label{Galerkin_tt}
\bu^N_{tt}&= -P_N((\nabla\times\bu^N_t)\times\bu^N)
-P_N((\nabla\times\bu^N)\times\bu^N_t)+ \nu\triangle\bu^N_t.
\end{align}
Hence, since $n\in\set{2,3}$, so that $H^2$ is an algebra, we obtain
\begin{align}\label{twist_utt_H2}
\|\bu_{tt}^N\|_{H^2}
\leq 
\|\nabla\times\bu^N_t\|_{H^2}\|\bu^N\|_{H^2} 
+\|\nabla\times\bu^N\|_{H^2}\|\bu^N_t\|_{H^2} 
+ \nu\|\bu_t\|_{H^4}.
\end{align}
Letting $m=3$ in \eqref{Gal_bdd_Hm_Burgers_visc} and using the above bounds, we obtain for any $\epsilon\in(0,T^*)$,
\begin{align}
    \set{\bu^N_{tt}}_{N=1}^\infty\text{ is bounded in }L^2(\epsilon,T^*;H^2).
\end{align}
Hence, 
$\bu\in 
H^1(\epsilon,T^*;H^4)\cap H^2(\epsilon,T^*;H^2)
\subset
C^0([\epsilon,T^*];C^2)\cap C^1([\epsilon,T^*];C^0)
$; 
i.e., $\bu$ is a classical solution on $[\epsilon,T^*]$ for any $\epsilon\in(0,T^*)$.  
\end{proof}
\subsection{A Maximum Principle}
In this subsection, we use the existence of classical solutions to establish a maximum principle\footnote{Note that the maximum principle does not apply at the Galerkin level, and hence, we could not apply it before establishing the existence of classical solutions.}, and use this in turn to prove global well-posedness.  
In the absence of a forcing term, i.e., $\bbf\equiv\mathbf{0}$, one can use a nearly identical approach to \cite{Pooley_Robinson_2016} (in fact, it is slightly easier in our case, since the nonlinear term vanishes pointwise), which in turn follows ideas from \cite{Kiselev_Ladyzhenskaya_1957}\footnote{although see the discussion in \cite{Pooley_Robinson_2016} of a possibly fatal flaw in \cite{Kiselev_Ladyzhenskaya_1957}.}.  However, for a non-zero forcing term, a different argument seems necessary, which we provide below.  
\begin{lemma}[Maximum principle]\label{lemma_maximum_principle_viscous}
Let $a,b\in\nR$ with $a>b$ be given, and also let\footnote{This level of regularity on $\bbf$ is only to ensure the existence of a classical solution.  This part of the proof only requires that $\bbf\in L^1(a,b;L^\infty)$.} $\bbf\in L^2(a,b;\dot{H}^4)$, and $\bbf_t\in L^2(a,b;\dot{H}^2)$.  Suppose $\bu$ is classical solution to \eqref{Burgers_twist} on $[a,b]$; i.e., $\bu$ is a strong solution that also satisfies $\bu\in C^1([a,b],C^0(\nT^3))\cap C^0([a,b],C^2(\nT^3))$.  Then 
\begin{align}\label{sup_max_twist}
    \sup_{t\in[a,b]}\|\bu(t)\|_{L^\infty}\leq \|\bu(a)\|_{L^\infty} + \int_a^t\|\bbf(s)\|_{L^\infty}\,ds.
\end{align}
\end{lemma}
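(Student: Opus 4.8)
The plan is to bootstrap from the pointwise energy identity \eqref{Burgers_vor_dot} --- which is available precisely because $\bu$ is a classical solution --- into a scalar differential inequality for $\psi(t):=\|\bu(t)\|_{L^\infty}^2=\max_{\bx\in\nT^3}|\bu(\bx,t)|^2$. First I would observe that, since $\bu$ is classical, $\partial_t\bu\in C^0([a,b];C^0(\nT^3))$ is bounded, so $|\bu(\bx,t)|^2=|\bu(\bx,a)|^2+\int_a^t 2\,\bu\cdot\partial_t\bu\,ds$ is Lipschitz in $t$ uniformly in $\bx$; taking the maximum over the compact torus, $\psi$ is Lipschitz on $[a,b]$, hence absolutely continuous and differentiable for a.e.\ $t$.

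Next, fix a time $t$ at which $\psi'(t)$ exists and at which \eqref{Burgers_vor_dot} holds pointwise in $\bx$ (a.e.\ $t$ has both properties), and choose a maximizer $\bx_t\in\nT^3$ of $\bx\mapsto|\bu(\bx,t)|^2$. A one-sided difference-quotient (Danskin-type) argument, comparing $\psi(t\pm h)\geq|\bu(\bx_t,t\pm h)|^2$ and letting $h\to0^\pm$, gives $\psi'(t)=\partial_t|\bu|^2(\bx_t,t)$. Since $\bx_t$ is an interior maximum of the $C^2$ function $\bx\mapsto|\bu(\bx,t)|^2$ on the torus, the Hessian there is negative semidefinite, so $\triangle|\bu|^2(\bx_t,t)\leq 0$. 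Feeding this and $|\nabla\bu|^2\geq0$ into \eqref{Burgers_vor_dot} at $(\bx_t,t)$, and using $|\bu(\bx_t,t)|=\sqrt{\psi(t)}$ together with Cauchy--Schwarz $\bbf\cdot\bu\leq|\bbf|\,|\bu|\leq\|\bbf(t)\|_{L^\infty}\sqrt{\psi(t)}$, yields
\begin{align*}
\psi'(t)\leq 2\|\bbf(t)\|_{L^\infty}\sqrt{\psi(t)}\qquad\text{for a.e.\ }t\in[a,b].
\end{align*}

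Finally, to convert this into the claimed bound I would remove the degeneracy of the square root by regularization: for each $\delta>0$ the function $h_\delta(t):=\sqrt{\psi(t)+\delta^2}$ is Lipschitz on $[a,b]$ (since $s\mapsto\sqrt{s+\delta^2}$ has derivative $\leq\tfrac1{2\delta}$ on $[0,\infty)$), hence absolutely continuous with $h_\delta'(t)=\psi'(t)/(2\sqrt{\psi(t)+\delta^2})\leq\|\bbf(t)\|_{L^\infty}$ for a.e.\ $t$. Integrating and using $\bbf\in L^1(a,b;L^\infty)$ gives $\sqrt{\psi(t)+\delta^2}\leq\sqrt{\psi(a)+\delta^2}+\int_a^t\|\bbf(s)\|_{L^\infty}\,ds$, and sending $\delta\to0^+$ produces $\|\bu(t)\|_{L^\infty}=\sqrt{\psi(t)}\leq\|\bu(a)\|_{L^\infty}+\int_a^t\|\bbf(s)\|_{L^\infty}\,ds$; taking the supremum over $t\in[a,b]$ (the right-hand side being nondecreasing) gives \eqref{sup_max_twist}. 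The main obstacle is the differentiation step: rigorously justifying that $\psi(t)=\max_{\bx}|\bu(\bx,t)|^2$ is differentiable a.e.\ with derivative equal to $\partial_t|\bu|^2$ evaluated at a spatial maximizer --- i.e.\ that one may legitimately ``evaluate the equation at the point where the maximum is attained.'' The Lipschitz continuity of $\psi$ plus the envelope argument handles this, while the ensuing degeneracy at zeros of $\bu$, where $\sqrt{\psi}$ fails to be Lipschitz, is dispatched by the $\delta$-regularization above.
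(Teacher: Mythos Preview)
Your argument is correct, but it follows a genuinely different route from the paper's. The paper regularizes at the PDE level: it sets $w(\bx,t):=\sqrt{\epsilon+|\bu(\bx,t)|^2}$, uses the identity $\bu\cdot\triangle\bu=\tfrac12\triangle|\bu|^2-|\nabla\bu|^2=\tfrac12\triangle w^2-|\nabla\bu|^2\leq w\triangle w$ to derive the scalar subsolution inequality $w_t-\nu\triangle w\leq\|\bbf\|_{L^\infty}$, subtracts off $\int_a^t\|\bbf(s)\|_{L^\infty}\,ds$, and then invokes the classical parabolic maximum principle on the resulting heat subsolution before sending $\epsilon\to0$. By contrast, you reduce directly to an ODE: you track $\psi(t)=\max_\bx|\bu(\bx,t)|^2$, use Rademacher plus a Danskin-type envelope argument to differentiate it, evaluate \eqref{Burgers_vor_dot} at the spatial maximizer (where $\triangle|\bu|^2\leq0$), and only then regularize via $\sqrt{\psi+\delta^2}$. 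The paper's approach is shorter and treats the parabolic maximum principle as a black box, at the price of importing that result; your approach is more self-contained (needing only Rademacher and elementary integration) and makes the ``evaluate at the maximum'' heuristic from \eqref{Burgers_vor_dot} fully explicit, at the price of the envelope-differentiation step, which you correctly identify as the place requiring care. Both regularizations serve the same purpose---circumventing the non-Lipschitz behavior of $\sqrt{\cdot}$ at zero---but at different stages of the argument.
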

\begin{proof} 
Assume the hypotheses, and let $\epsilon>0$ be given.  Denote
$w=w(\bx,t):=\sqrt{\epsilon+|\bu(\bx,t)|^2}$.  Clearly, $0\leq|\bu|<w$, and $w\in C^1([a,b],C^0)\cap C^0([a,b],C^2)$.  Note that $|\bu|^2=w^2-\epsilon$, so that $\triangle|\bu|^2=\triangle w^2$.  Moreover, note that $\nabla w= \frac{\bu}{w}\cdot\nabla u$, so that $|\nabla w|\leq\frac{|\bu|}{w}|\nabla\bu|\leq|\nabla\bu|$. Hence, using the well-known identity $\bv\cdot\triangle\bv = \frac12\triangle|\bv|^2-|\nabla\bv|^2$ twice, we estimate 
\begin{align}\notag
\bu\cdot\triangle\bu 
&= \tfrac12\triangle|\bu|^2-|\nabla\bu|^2
= \tfrac12\triangle w^2-|\nabla\bu|^2
=w\triangle w+|\nabla w|^2-|\nabla\bu|^2
\\&\leq \notag
w\triangle w.
\end{align}
Note that $\bbf\in L^2(a,b;\dot{H}^4)\subset L^1(a,b;L^\infty)$.  We now estimate for $t\in[a,b]$,
\begin{align}\label{w_estimate}
w_t 
&= 
\frac{\bu}{w}\cdot\bu_t
=
\frac{\bu}{w}\cdot(-\bomega\times\bu + \nu\triangle\bu + \bbf)
=
\frac{\nu}{w}\bu\cdot\triangle\bu + \frac{\bu}{w}\cdot\bbf
\\&\leq \notag
\nu \triangle w + \|\bbf\|_{L^\infty}.
\end{align}
Denoting $v=w-\int_a^t\|\bbf(s)\|_{L^\infty}\,ds$, inequality \eqref{w_estimate} yields $v_t-\nu\triangle v\leq0$, so by the usual parabolic maximum principle, $\|v(t)\|_{L^\infty}\leq \|v(a)\|_{L^\infty}$, from which \eqref{sup_max_twist} easily follows by sending $\epsilon\rightarrow0$.
\end{proof}
\subsection{Global Well-Posedness}
Thanks to the maximum principle, we can now prove global well-posedness in the viscous case under a slightly more restrictive assumption on $\bbf$.
\begin{theorem}\label{thm_Burgers_twist_GWP}
Given $\nu>0$, $\bu_0\in H^1(\nT^3)$, an arbitrary $T\in(0,\infty)$, 
and $\bbf\in L^2(0,T;\dot{H}^4)$ with $\bbf_t\in L^2(0,T;\dot{H}^2)$, the unique strong solution to \eqref{Burgers_twist}, guaranteed by Theorem \ref{thm_strong_local}, can be extended to a solution $\bu \in C^1((0,T],C^0(\nT^3))\cap C^0((0,T],C^2(\nT^3))$.
\end{theorem}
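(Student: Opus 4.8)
The plan is to prove global existence by a continuation argument, using the maximum principle of Lemma~\ref{lemma_maximum_principle_viscous} to promote the purely local-in-time $H^1$ bound from Theorem~\ref{thm_strong_local} to a bound uniform on $[0,T)$. By Theorem~\ref{thm_strong_local} and uniqueness of strong solutions there is a well-defined maximal strong solution on some interval $[0,T_{\max})$, and it suffices to show $T_{\max}\geq T$. Note first that the hypotheses $\bbf\in L^2(0,T;\dot H^4)$ and $\bbf_t\in L^2(0,T;\dot H^2)$ are precisely those needed to invoke Theorem~\ref{thm_Hm_bounds} with $m=5$, so for every $0<\epsilon<b<T_{\max}$ the solution is in fact classical on $[\epsilon,b]$; in particular $\bu(t)\in C^2(\nT^3)\subset L^\infty(\nT^3)$ for each $t\in(0,T_{\max})$.

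Suppose, for contradiction, that $T_{\max}<T$, and fix $\epsilon\in(0,T_{\max})$. On each $[\epsilon,b]$ with $b<T_{\max}$ the solution is classical, so Lemma~\ref{lemma_maximum_principle_viscous} applies and yields $\|\bu(t)\|_{L^\infty}\leq\|\bu(\epsilon)\|_{L^\infty}+\int_\epsilon^{T_{\max}}\|\bbf(s)\|_{L^\infty}\,ds=:M<\infty$ for all $t\in[\epsilon,T_{\max})$, where finiteness uses $\bbf\in L^2(0,T;\dot H^4)\subset L^1(0,T;L^\infty)$. I would then re-run the $H^1$ estimate directly on the classical solution: differentiating \eqref{Burgers_twist} by $\partial_\alpha$ for $|\alpha|\leq1$ and pairing with $\partial_\alpha\bu$, the pointwise orthogonality \eqref{cross_prod_ortho} removes the $\beta=\alpha$ term exactly as in \eqref{Burgers_alpha}, and the only surviving nonlinear term (when $|\alpha|=1$) is now controlled, using $|\nabla\times\partial_\alpha\bu|\leq|\nabla\partial_\alpha\bu|$ together with the maximum-principle bound, by
\[
|((\partial_\alpha\bomega)\times\bu,\partial_\alpha\bu)|\leq\|\nabla\partial_\alpha\bu\|_{L^2}\|\bu\|_{L^\infty}\|\partial_\alpha\bu\|_{L^2}\leq\tfrac{\nu}{2}\|\nabla\partial_\alpha\bu\|_{L^2}^2+\tfrac{C}{\nu}M^2\|\partial_\alpha\bu\|_{L^2}^2,
\]
rather than by the cubic right-hand side $\|\bu\|_{H^1}^4\|\partial_\alpha\bu\|_{L^2}^2$ of \eqref{H1_bound_twist}. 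Treating the forcing by \eqref{Hm1_bound} and Cauchy--Schwarz (using $\bbf\in L^2(0,T;\dot H^1)$) and summing over $|\alpha|\leq1$ gives, on $[\epsilon,T_{\max})$,
\[
\frac{d}{dt}\|\bu\|_{H^1}^2+\nu\|\nabla\bu\|_{H^1}^2\leq C_\nu(1+M^2)\|\bu\|_{H^1}^2+C_\nu\|\bbf\|_{H^1}^2.
\]
Since the coefficient is a bounded constant and the source lies in $L^1(0,T)$, Gr\"onwall's inequality gives $\sup_{[\epsilon,T_{\max})}\|\bu(t)\|_{H^1}<\infty$; combined with $\sup_{[0,\epsilon]}\|\bu(t)\|_{H^1}<\infty$ from Theorem~\ref{thm_strong_local}, this yields $R:=\sup_{t\in[0,T_{\max})}\|\bu(t)\|_{H^1}<\infty$.

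To close the contradiction, I would use that the local existence time in Theorem~\ref{thm_strong_local} can be taken as $T^{*}=\nu^{3}/(4C\|\bu_0\|_{H^1}^4)$, depending on the data only through $\|\bu_0\|_{H^1}$. Picking $t_0\in(0,T_{\max})$ with $T_{\max}-t_0<\tfrac12\nu^3/(4CR^4)$ and noting $\bu(t_0)\in H^1$ with $\|\bu(t_0)\|_{H^1}\leq R$, Theorem~\ref{thm_strong_local} with initial time $t_0$ produces a strong solution on $[t_0,t_0+\nu^3/(4CR^4)]$ which, by uniqueness, agrees with $\bu$ on $[t_0,T_{\max})$ and continues past $T_{\max}$ --- contradicting maximality. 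Hence $T_{\max}\geq T$, so $\bu$ is a strong solution on $[0,T]$; applying Theorem~\ref{thm_Hm_bounds} (now with $T^{*}=T$) on $[\epsilon,T]$ for each $\epsilon\in(0,T)$ gives $\bu\in C^1([\epsilon,T];C^0)\cap C^0([\epsilon,T];C^2)$, hence $\bu\in C^1((0,T];C^0(\nT^3))\cap C^0((0,T];C^2(\nT^3))$.

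The main obstacle is the global-in-time $H^1$ estimate: without the $L^\infty$ bound the $H^1$ inequality only closes locally (the cubic right-hand side of \eqref{rotBurgersH1_drop}), so a genuine new ingredient is needed, and the maximum principle supplies exactly the required $L^\infty$ control. The delicate point is that the maximum principle presupposes a classical solution, which is only available after the parabolic smoothing of Theorem~\ref{thm_Hm_bounds}, and that smoothing regularizes only on $[\epsilon,\cdot]$ and not up to $t=0$ (since $H^1(\nT^3)\not\hookrightarrow L^\infty$ in three dimensions); this forces the split into the interval $[0,\epsilon]$, handled by the local theory, and $[\epsilon,T_{\max})$, where the maximum principle and the improved $H^1$ estimate operate.
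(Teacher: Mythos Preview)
Your proof is correct and follows essentially the same strategy as the paper's: continuation argument, parabolic smoothing to classical regularity on $[\epsilon,\cdot]$, maximum principle for an $L^\infty$ bound, and then an improved $H^1$ estimate using that bound. The only minor technical difference is that the paper pairs \eqref{Burgers_twist} directly with $-\triangle\bu$ and, after integrating, controls $\int_\epsilon^t\|\nabla\bu\|_{L^2}^2\|\bu\|_{L^\infty}^2\,ds$ via the energy bound $\int_0^{T^{**}}\|\nabla\bu\|_{L^2}^2\,ds<\infty$ from \eqref{en_eq} rather than Gr\"onwall, whereas you differentiate first and apply Gr\"onwall with constant coefficient; both routes close the $H^1$ estimate equally well.
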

\begin{proof}
 Let $\bu$ be a strong solution defined on $[0,T^{**})$, where $T^{**}>0$ is its maximum time interval of existence and uniqueness.  By Theorems \ref{thm_strong_local} and \ref{thm_Burgers_twist_GWP}, $\bu$ is a classical solution for positive times; i.e., for any $\epsilon\in(0,T^{**})$, $\bu\in C^1([\epsilon,T^{**}),C^0)\cap C^0([\epsilon,T^{**}),C^2)$.  By Lemma \ref{lemma_maximum_principle_viscous}, 
 \begin{align}
    \sup_{t\in[\epsilon,T^{**}]}\|\bu(t)\|_{L^\infty}\leq \|\bu(\epsilon)\|_{L^\infty} + \int_\epsilon^{T^{**}}\|\bbf(s)\|_{L^\infty}\,ds.
\end{align}
Taking the inner-product of $\eqref{Burgers_twist}$ with $-\triangle\bu$, we obtain after integration by parts
\begin{align}
 \label{H1_bound_twist_global}
\frac12\frac{d}{dt}\|\nabla \bu\|_{L^2}^2
 + \nu\|\triangle\bu\|_{L^2}^2 
&= 
((\nabla\times\bu)\times\bu,\triangle \bu)
- (\bbf,\triangle\bu)
\\&\leq\notag
 \|\nabla\bu\|_{L^2}\|\bu\|_{L^\infty}\|\triangle\bu\|_{L^2}
 +\|\bbf\|_{L^2}\|\triangle\bu\|_{L^2}
\\&\leq \notag
\frac{1}{\nu}\|\nabla\bu\|_{L^2}^2\|\bu\|_{L^\infty}^2+\frac{\nu}{2}\|\triangle\bu\|_{L^2}^2
+\frac{1}{\nu}\|\bbf\|_{L^2}^2.
\end{align}
Let $0<\epsilon< t< T^{**}$.  Rearranging \eqref{H1_bound_twist_global} and integrating on $[\epsilon,t]$, we obtain
\begin{align*}
&\quad
\|\nabla \bu(t)\|_{L^2}^2
 + \nu\int_\epsilon^t\|\triangle\bu(s)\|_{L^2}^2\,ds
\\&\leq\notag
\|\nabla \bu(\epsilon)\|_{L^2}^2
+ \frac{2}{\nu}\int_\epsilon^t\|\nabla\bu(s)\|_{L^2}^2\|\bu(s)\|_{L^\infty}^2\,ds
+ \int_\epsilon^{t}\|\bbf(s)\|_{L^\infty}\,ds
\\&\leq\notag
\|\nabla \bu(\epsilon)\|_{L^2}^2
+ \frac{2}{\nu}\|\bu(\epsilon)\|_{L^\infty}^2\int_0^{T^{**}}\|\nabla\bu(s)\|_{L^2}^2\,ds
+ \int_0^{T^{**}}\|\bbf(s)\|_{L^\infty}\,ds.
\end{align*}
If $T^{**}<\infty$, then the above inequality implies $\lim_{t\nearrow T^{**}}\|\nabla \bu(t)\|_{L^2}^2<\infty$, since $\bbf\in L^2(0,T;\dot{H}^4)\subset L^1(0,T;L^\infty)$. Hence, owing to Theorem \ref{thm_strong_local}, the solution can be continued past $T^{**}$, contradicting the maximality of $T^{**}$.
\end{proof}
\begin{remark}\label{remark_attractor_needs_poincare}
It may seem that, now that we know that solutions are global, one could prove the existence of a global attractor using the energy equality \eqref{en_eq} to establish time-independent bounds, along with higher-order estimates, which can be controlled by the maximum principle.
If one had such estimates for the 3D Navier-Stokes equations, one could use such estimates to prove the existence of a global attractor (see, e.g., \cite{Robinson_2001,Enlow_Larios_Wu_2023_NSE}) by using the Poincar\'e inequality to get time-independent bounds.  However, due to the lack of conservation of mean in \eqref{Burgers_twist}, the Poincar\'e inequality is not available.  Another option would be to consider \eqref{Burgers_twist} with no-slip (i.e., homogeneous Dirichlet) boundary conditions.  The analysis for establishing local well-posedness goes through, and is even slightly simpler in some cases due to the availability of the Poincar\'e inequality.  However, proving global well-posedness seems to require the use of a maximum principle, which in turn seems to require the establishment of classical solutions.  However, repeating the steps of the proof of Theorem \ref{thm_Hm_bounds} would not work in the case of a bounded domain, as it would require higher-order boundary conditions, and one must use instead, e.g., elliptic regularity as in, e.g., \cite{Galdi_2000_NSE_book_chapter,Temam_2001_Th_Num}, or semigroup methods.
However, for the sake of simplicity, we do not pursue such approaches here.  Instead, in the next section, we simply add a damping term, which allows for time-independent bounds.
\end{remark}
\section{The Global Attractor (Damped-Driven Case)}\label{sec_global_attractor}
 \noindent
In this section, we prove the existence of a global attractor in $L^2$ for the semigroup on $H^1$, but for the reasons discussed in Remark \ref{remark_attractor_needs_poincare}, we add a linear damping term to  \eqref{Burgers_twist_all}, as shown in \eqref{Burgers_twist_damping_all}. To avoid the trivial case, we also drive the solution by a time-independent force $\bbf$.  Moreover, for similar reasons to those discussed in Remark \ref{remark_no_weak_Burgers_twist_sol}, we are not able to pass all derivatives to test functions, which prevents us from obtaining a time-independent $H^1$ bound, unless we assume that the initial data is in $L^\infty$.  However, the $L^\infty$ norm is of course not uniformly controlled by the $H^1$ norm of the initial data, and thus we are only able to prove the existence of a global attractor in $L^2$ for initial data in $H^1$, or, restricting the phase space, we can prove the existence of a global attractor in $L^\infty\cap H^1$.  We consider both possibilities here.
Given  damping coefficient $\gamma>0$, consider 
\begin{subequations}
\label{Burgers_twist_damping_all}
\begin{empheq}[left=\empheqlbrace]{align}
\label{Burgers_twist_damping}
\pd{\bu}{t}+\bomega\times\bu +\gamma\bu&= \nu \triangle\bu+\bbf, \qquad \bomega:=\nabla\times\bu,
\\
\label{Burgers_twist_damping_initial_data}
\bu(0) &= \bu_0.
\end{empheq}
\end{subequations}
Note that all the theorems of Section \ref{sec_rB_well_posedness} hold for \eqref{Burgers_twist_damping_all} \textit{mutatis mutandis} for $\gamma>0$, and with nearly identical proofs.  In particular, for $\bu_0\in H^1(\nT^3)$, by the $\gamma>0$ analogue of Theorem \ref{thm_Burgers_twist_GWP}, there exists a unique global strong solution $\bu$ to \eqref{Burgers_twist_damping_all} which is a classical solution for $t>0$.  We note the following slight change in the maximum principle, which will be important.
\begin{lemma}[Maximum principle, damped case]\label{lemma_maximum_principle_viscous_damped}
Assume the same hypotheses as in Lemma \ref{lemma_maximum_principle_viscous}, except that $\bu$ solves \eqref{Burgers_twist_damping} instead of \eqref{Burgers_twist}.  Let $\gamma\geq0$.  Then 
\begin{align}\label{sup_max_twist_damped}
    \sup_{t\in[a,b]}\|\bu(t)\|_{L^\infty}
    \leq 
    e^{-\gamma(t-a)}\|\bu(a)\|_{L^\infty} + \int_a^te^{-\gamma(t-s)}\|\bbf(s)\|_{L^\infty}\,ds.
\end{align}
\end{lemma}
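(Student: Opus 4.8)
The plan is to follow the proof of Lemma~\ref{lemma_maximum_principle_viscous} almost verbatim, tracking how the extra linear damping $\gamma\bu$ propagates through the regularized quantity $w=w(\bx,t):=\sqrt{\epsilon+|\bu(\bx,t)|^2}$. Exactly as in the undamped case, one has $0\le|\bu|<w$, $w\in C^1([a,b],C^0)\cap C^0([a,b],C^2)$, $\triangle|\bu|^2=\triangle w^2$, $|\nabla w|\le|\nabla\bu|$, and therefore $\bu\cdot\triangle\bu\le w\triangle w$.

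The only new input is the evolution equation for $w$. Using \eqref{Burgers_twist_damping}, the cross-product orthogonality \eqref{cross_prod_ortho}, and the elementary identity $|\bu|^2/w=w-\epsilon/w\ge w-\sqrt{\epsilon}$, I expect to obtain
\begin{align*}
w_t=\frac{\bu}{w}\cdot\bu_t
=\frac{\nu}{w}\,\bu\cdot\triangle\bu-\gamma\frac{|\bu|^2}{w}+\frac{\bu}{w}\cdot\bbf
\le \nu\triangle w-\gamma w+\gamma\sqrt{\epsilon}+\|\bbf\|_{L^\infty},
\end{align*}
so that, up to an $O(\sqrt{\epsilon})$ error that will be harmless in the limit, the damping shows up as exactly the linear sink $-\gamma w$.

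Next I would remove the damping with an integrating factor: $z:=e^{\gamma(t-a)}w$ satisfies $z_t-\nu\triangle z\le e^{\gamma(t-a)}\big(\gamma\sqrt{\epsilon}+\|\bbf\|_{L^\infty}\big)$, and subtracting the spatially-constant antiderivative of the right-hand side, i.e. setting $v:=z-\int_a^t e^{\gamma(s-a)}\big(\gamma\sqrt{\epsilon}+\|\bbf(s)\|_{L^\infty}\big)\,ds$, gives $v_t-\nu\triangle v\le 0$ on $\nT^3\times[a,b]$. The standard parabolic maximum principle on the torus then yields $\|v(t)\|_{L^\infty}\le\|v(a)\|_{L^\infty}=\|w(a)\|_{L^\infty}$, and unwinding the definitions gives
\begin{align*}
\|w(t)\|_{L^\infty}\le e^{-\gamma(t-a)}\|w(a)\|_{L^\infty}
+\int_a^t e^{-\gamma(t-s)}\big(\gamma\sqrt{\epsilon}+\|\bbf(s)\|_{L^\infty}\big)\,ds.
\end{align*}
Since $|\bu(t)|<w(t)$ pointwise, $\|w(a)\|_{L^\infty}\to\|\bu(a)\|_{L^\infty}$ as $\epsilon\to 0^+$, and $\int_a^t e^{-\gamma(t-s)}\gamma\sqrt{\epsilon}\,ds\le\sqrt{\epsilon}\to 0$, sending $\epsilon\to 0^+$ produces \eqref{sup_max_twist_damped}; the case $\gamma=0$ recovers Lemma~\ref{lemma_maximum_principle_viscous}.

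I do not anticipate a genuine obstacle here. The only points that need a little care are keeping the regularization parameter $\epsilon$ in play until the very last step (so that the spurious $\gamma\sqrt{\epsilon}$ term can be discarded cleanly) and the bookkeeping with the integrating factor, so that the weights $e^{-\gamma(t-s)}$ land in the right places; everything else is identical to the undamped argument.
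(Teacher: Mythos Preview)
Your proposal is correct and follows essentially the same approach as the paper: the paper also regularizes via $w=\sqrt{\epsilon+|\bu|^2}$, uses the same inequality $|\bu|^2/w\ge w-\sqrt{\epsilon}$ to obtain $w_t+\gamma w\le\nu\triangle w+\|\bbf\|_{L^\infty}+\gamma\sqrt{\epsilon}$, multiplies by the integrating factor $e^{\gamma t}$ (you use $e^{\gamma(t-a)}$, which is equivalent), subtracts the time-integrated source, applies the scalar parabolic maximum principle, and sends $\epsilon\to0^+$. The only cosmetic difference is your choice of exponent normalization; the substance is identical.
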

\begin{proof} 
Assume the hypotheses. 
The $\gamma=0$ case is covered by Lemma \ref{lemma_maximum_principle_viscous}, so assume $\gamma>0$. 
 Consider again $w=w(\bx,t):=\sqrt{\epsilon+|\bu(\bx,t)|^2}$ for $\epsilon>0$.  Arguing as in the proof of Lemma \ref{lemma_maximum_principle_viscous}, we obtain instead of \eqref{w_estimate},
\begin{align}\label{w_estimate_damped}
w_t +\gamma \tfrac{|\bu|^2}{w}&\leq 
\nu \triangle w + \|\bbf\|_{L^\infty}.
\end{align}
Since $\frac{|\bu|^2}{w}=w-\frac{\epsilon}{w}\geq w-\sqrt{\epsilon}$, denoting $W(t):=e^{\gamma t}w(t)$, we obtain
\begin{align}\label{w_estimate_damped2}
W_t&\leq 
\nu \triangle W + e^{\gamma t}\pnt{\|\bbf\|_{L^\infty} + \gamma\sqrt{\epsilon}}.
\end{align}
Denoting $V(t)=W(t)-\int_a^t e^{\gamma s}(\|\bbf(s)\|_{L^\infty}+\gamma\sqrt{\epsilon})\,ds$, inequality \eqref{w_estimate_damped2} yields $V_t-\nu\triangle V\leq0$, so by the usual parabolic maximum principle, $\|V(t)\|_{L^\infty}\leq \|V(a)\|_{L^\infty}$, from which \eqref{sup_max_twist_damped} easily follows by sending $\epsilon\rightarrow0$.  
\end{proof}
\subsection{\texorpdfstring{$L^2$}{} bounds in \texorpdfstring{$H^1$}{}}
We are now in a position to prove the existence of a global attractor.  From \eqref{en_eq}, we obtain as in \eqref{rotBurgersL2},
\begin{align}\label{rotBurgersL2_rigourous}
    \frac{d}{dt}\|\bu\|_{L^2}^2 + 2\gamma\|\bu\|_{L^2}^2 + \nu\|\nabla\bu\|_{L^2}^2
    &\leq
    \tfrac{2}{\nu}\|\bbf\|_{H^{-1}}^2,
\end{align}
Gr\"onwall's inequality then yields
\begin{align*}
    \|\bu(t)\|_{L^2}^2 + \nu\int_0^t e^{-2\gamma(t-s)}\|\nabla\bu(s)\|_{L^2}^2\,ds
    &\leq 
    e^{-2\gamma t}\|\bu_0\|_{L^2}^2+
    2\tfrac{1-e^{-2\gamma t}}{\gamma\nu}\|\bbf\|_{H^{-1}}^2.
\end{align*}
Denoting by $t_2(R):=\frac{-1}{2\gamma}\log\pnt{\max\set{1,\frac{\|\bbf\|_{H^{-1}}^2}{2\nu\gamma R^2}}}$ the absorption time of the $L^2$-ball of radius $R$, it holds that for all $t\geq t_2(\|\bu_0\|_{L^2})$, 
\begin{align}\label{rho_0}
    \|\bu(t)\|_{L^2}^2\leq \frac{5\|\bbf\|_{H^{-1}}^2}{2\nu\gamma}=:\rho_0.
\end{align}
Hence, the set
\[
B_0:=\set{\bv\in H^1(\nT^3)\big| \|\bv\|_{L^2}\leq\rho_0}.
\] 
is a bounded absorbing set in the $L^2(\nT^3)$ topology.  To show the existence of an attractor, we need compactness. 
Toward this end, denote $\tau:=1/\gamma$.  Integrating \eqref{rotBurgersL2_rigourous} on $[t,t+\tau]$ for $t\geq t_2(\|\bu_0\|_{L^2})$ yields
\begin{align}\label{rotBurgersL2_window}
    \nu\int_t^{t+\tau}\|\nabla\bu(s)\|_{L^2}^2\,ds
    &\leq
    \frac95\rho_0.
\end{align}
Next, we estimate the time derivative, but since we do not have a uniform-in-time estimate for $H^1$, we proceed somewhat differently from \eqref{time_der_est2}; however, the estimate is very similar to that of the 3D Navier-Stokes case.  Let $\bvphi\in H^1(\nT^3)$.  Then,
\begin{align}\label{time_der_est_43}
    &\quad
    \ip{\partial_t\bu}{\bvphi}
    \\&=\notag
    -\ip{(\nabla\times\bu)\times\bu}{\bvphi}-\gamma(\bu,\bvphi) -\nu\ip{\nabla\bu}{\nabla\bvphi} + \ip{\bbf}{\bvphi}
    \\&\leq\notag
    \|\nabla\times\bu\|_{L^2}\|\bu\|_{L^3}\|\bvphi\|_{L^6} 
    +(\gamma+\nu)\|\bu\|_{H^1}\|\bvphi\|_{H^1}
    +\sqrt{2}\|\bbf\|_{H^{-1}}\|\bvphi\|_{H^1}
    \\&\leq\notag
    C\|\bu\|_{H^1}^{3/2}\|\bu\|_{L^2}^{1/2}\|\bvphi\|_{H^1} 
    +(\gamma+\nu)\|\bu\|_{H^1}\|\bvphi\|_{H^1}
    +\sqrt{2}\|\bbf\|_{H^{-1}}\|\bvphi\|_{H^1}
\end{align}
Taking the supremum over $\bvphi\in H^1$ with $\|\bvphi\|_{H^1}=1$ and using \eqref{rho_0} and \eqref{rotBurgersL2_window} yields for any $t\geq t_2$,
\[
\partial_t u\text{ is bounded in }L^{4/3}(t,t+\tau;H^{-1}(\nT^3)),
\]
where the bound depends only on $\rho_0$, $\nu$, $\gamma$, and $\|\bbf\|_{H^{-1}}$.
Take any sequence of points $\bu^n_0\in B_0$ for initial data, and denote $\bu^n(t):=S(t)\bu^n_0$ where $S(t)$ is the semigroup operator on $H^1$ mapping initial data to the solution of \eqref{Burgers_twist_damping_all} at time $t$, which is defined for all $t\geq0$ by (the $\gamma>0$ analogue of) Theorem \ref{thm_Burgers_twist_GWP}.  For $s\in[0,\tau]$, denote $\bv^n(s):=\bu^n(t_2+s)$.  By the above bounds, the the Aubin-Lions Compactness Theorem implies that there exists a subsequence $\bv^{n_j}$ such that $\bv^{n_j}$ converges in $L^2(0,\tau;L^2)$.  By choosing a Lebesgue point $s_*\in(0,\tau)$ for the sequence, it follows that the set $\set{\bv^n(s_*)}_{n\in\nN}=\set{\bu^n(t_2+s_*)}_{n\in\nN}$ is relatively compact in $L^2(\nT^3)$.  Since  the initial sequence was arbitrary, this implies that $S(t_2+s_*)B_0$ is relatively compact in $L^2(\nT^3)$.  Thus, for any $t\geq t_2+s_*$, $S(t)B_0=S(t-(t_2+s_*))S(t_2+s_*)B_0$ is relatively compact in $L^2(\nT^3)$ by the continuity of $S(\cdot):H^1\rightarrow L^2$, which follows from (the $\gamma>0$ analogue of) Theorem \ref{thm_strong_local}.  Hence,
\begin{align}\label{def_attractor_L2}
\mathcal{A}:=\bigcap_{t\geq t_2+s_*}\overline{\bigcup_{s\geq t}S(s)B_0}^{\,L^2}
\end{align}
is a non-empty $L^2$-compact invariant set that attracts (in the $L^2$-sense) every bounded subset of $H^1$; i.e., $\mathcal{A}$ is the desired global attractor for system \eqref{Burgers_twist_damping_all}.
\subsection{\texorpdfstring{$L^\infty\cap H^1$}{} bounds}
In this section, we obtain a strong form of attraction, but on a restricted phase space.  To achieve this, we exploit the maximum  principle (which is not available in the 3D Navier-Stokes equations \eqref{NSE}) and the vanishing of the nonlinear term in $L^2$ estimates (which is not available in the 3D Burgers equation \eqref{Burgers}). 
From Theorem \ref{thm_Burgers_twist_GWP}, we know that if $\bu_0\in H^1$, then the solution to \eqref{Burgers_twist} $ \bu$ is a classical solution for $t>0$, and hence its $L^\infty$ norm is bounded.  However, it is not clear that it can be bounded uniformly in time in terms of $\|\bu_0\|_{H^1}$, so our strategy is to restrict the phase space.
 Hence, we work in the Banach space
\begin{align}\label{X_Banach_space}
    X:=H^1(\nT^3)\cap L^\infty(\nT^3), \quad\|\cdot\|_X:= \|\cdot\|_{H^1} + \|\cdot\|_{L^\infty}.
\end{align}
Note that $H^2(\nT^3)=H^2(\nT^3)\cap L^\infty(\nT^3)$ is compactly contained in $X$, which follows from the Rellich–Kondrachov Theorem and \eqref{agmon} with $s_1=1$, $s_2=2$.
Let $\bu_0\in X$ and $\bbf\in\dot{H}^4(\nT^3)$ be given (in particular, we assume $\bbf$ is time-independent).  Let $\bu$ be the classical solution of \eqref{Burgers_twist_damping_all}. From \eqref{sup_max_twist_damped}, we obtain
\begin{align}\label{sup_max_twist_damped_time_indep}
    \sup_{t\in[0,T]}\|\bu(t)\|_{L^\infty}
    \leq 
    e^{-\gamma t}\|\bu_0\|_{L^\infty} + 
    \frac{1-e^{-\gamma t}}{\gamma}\|\bbf\|_{L^\infty}
\end{align}
Denoting by $t_\infty(R) :=\frac{-1}{\gamma}\log\pnt{\max\set{1,\frac{\|\bbf\|_{L^\infty}}{2\gamma R}}}$ the $L^\infty$ the absorption time of the $L^\infty$-ball of radius $R$, it holds that for any $t\geq t_\infty(\|\bu_0\|_{L^\infty})$,
\begin{align*}
    \sup_{s\geq t}\|\bu(s)\|_{L^\infty}
    \leq 
    \frac{3}{2\gamma}\|\bbf\|_{L^\infty}=:\rho_\infty.
\end{align*}
We established an $H^1$ bound in \eqref{H1_bound_twist}, but here we estimate differently since we may now use the maximum principle.  
Denote
\[t_X:=\max\set{t_2(\|\bu_0\|_{L^2}),t_\infty(\|\bu_0\|_{L^\infty})}\in(0,\infty).\]
Taking the inner-product of \eqref{Burgers_twist_damping} with $-\triangle\bu$, for $t\geq t_X$, we obtain 
\begin{align}
&\quad
\frac12\frac{d}{dt}\|\nabla\bu\|_{L^2}^2 + \gamma\|\nabla\bu\|_{L^2}^2 + \nu\|\triangle\bu\|_{L^2}^2
\\&= \notag
((\nabla\times\bu)\times\bu,\triangle\bu)
-(\bbf,\triangle\bu)
\\&\leq \notag
\rho_\infty\|\nabla\bu\|_{L^2}\|\triangle\bu\|_{L^2}
+
\|\bbf\|_{L^2}\|\triangle\bu\|_{L^2}
\\&\leq \notag
\frac{\rho_\infty^2}{\nu}\|\nabla\bu\|_{L^2}^2 +\frac{1}{\nu}\|\bbf\|_{L^2}^2 + \frac{\nu}{2}\|\triangle\bu\|_{L^2}^2.
\end{align}
Hence,
\begin{align}\label{H1_bound_damped_Linfty}
\frac{d}{dt}\|\nabla\bu\|_{L^2}^2 + 2\gamma\|\nabla\bu\|_{L^2}^2 + \nu\|\triangle\bu\|_{L^2}^2
\leq
\frac{2\rho_\infty^2}{\nu}\|\nabla\bu\|_{L^2}^2 +\frac{2}{\nu}\|\bbf\|_{L^2}^2.
\end{align}
Taking $t\geq t_X+\tau$ and integrating \eqref{H1_bound_damped_Linfty} on $[\sigma,t]$ with $\sigma\in[t-\tau,t)$, we obtain (since that $0<t-\sigma\leq 1/\gamma$)
\begin{align}
&\quad \notag
\|\nabla\bu(t)\|_{L^2}^2 + 2\gamma\int_{\sigma}^{t}\|\nabla\bu(s)\|_{L^2}^2\,ds + \nu\int_{\sigma}^{t}\|\triangle\bu(s)\|_{L^2}^2\,ds
\\&\leq \notag
\|\nabla\bu(\sigma)\|_{L^2}^2
+\frac{2\rho_\infty^2}{\nu}\int_{t-\tau}^{t}\|\nabla\bu(s)\|_{L^2}^2\,ds +\frac{2}{\gamma\nu}\|\bbf\|_{L^2}^2
\leq
\|\nabla\bu(\sigma)\|_{L^2}^2
+\widetilde{\rho}_1,
\end{align}
where $\widetilde{\rho}_1:=\frac{18\rho_\infty^2\rho_0}{5\nu}
+\frac{2}{\gamma\nu}\|\bbf\|_{L^2}^2$, where $\rho_0$ is defined by \eqref{rho_0}.  
Integrating this bound with respect to $\sigma\in [t-\tau,t]$, dropping some terms on the left-hand side, and multiplying by $\tau^{-1}=\gamma$, we obtain, for all $t\geq t_X + \tau$,
\begin{align}
\|\nabla\bu(t)\|_{L^2}^2 
\leq
\gamma\int_{t-\tau}^t\|\nabla\bu(\sigma)\|_{L^2}^2\,d\sigma
+\widetilde{\rho}_1
\leq
\frac{9\gamma\rho_0}{5}+\widetilde{\rho}_1=:\rho_1.
\end{align}
Hence, we have shown that the set 
\[
B_X:=\set{\bv\in X\big| \|\bv\|_{L^\infty}\leq\rho_\infty,\quad \|\bv\|_{H^1}\leq \rho_1}.
\]
is a bounded absorbing set in $X$. 
Next, for compactness, we need uniform-in-time estimates in $H^2$. Semi-group estimates are perhaps the most convenient for this (see, e.g., \cite{Pazy_2012_semigroup_book}), but to keep things self-contained, we briefly rederive the necessary estimates in the periodic setting. 
Setting $A_{\gamma,\nu}:=\gamma-\nu\triangle$, we rewrite \eqref{Burgers_twist_damping} on $[\sigma,t]$ as:
\begin{align}\label{Burgers_twist_mild}
    \bu(t) = e^{-A_{\gamma,\nu}(t-\sigma)}\bu(\sigma)
    + \int_\sigma^t e^{-(t-s)A_{\gamma,\nu}}[(\bu(s)\times(\nabla\times\bu(s))+\bbf]\,ds,
\end{align}
where $\bu(0)=\bu_0\in H^1(\nT^3)$ and $\bu$ is the unique strong solution to \eqref{Burgers_twist_damping_all}.
Since for any $r>0$ and $\bv\in H^{2r}(\nT^d)$, $\min\set{\gamma,\nu}^r\|\bv\|_{H^{2r}(\nT^d)}\leq \|A_{\gamma,\nu}^r\bv\|_{L^2}\leq \max\set{\gamma,\nu}^r\|\bv\|_{H^{2r}(\nT^d)}$, and for any $\bv\in L^2(\nT^d)$, since $x^{r}e^{-tx}\leq \pnt{\frac{r}{et}}^r$ for $r>0$, $x\geq0$, and $t>0$,
\begin{align*}
    \|A_{\gamma,\nu}^r e^{-t A_{\gamma,\nu}}\bv\|_{L^2}^2
    &=
    \sum_{\bk\in\nZ^d}(\gamma+\nu|\bk|^2)^{2r}e^{-2t(\gamma+\nu|\bk|^2)}|\widehat{\bv}_\bk|^2
    \leq
    \pnt{\frac{r}{et}}^{2r}\|\bv\|_{L^2}^2.
\end{align*}
Using this first with $r=1$, then with $r=1/2$, we obtain from \eqref{Burgers_twist_mild} applied on the interval $[t_X,t]$, with $t\geq t_X+\tau$ (so that $(t-t_X)^{-1}\leq\gamma$),
\begin{align}
\|\bu(t)\|_{H^2}
    &\leq \notag
    C_{\gamma,\nu} \|A_{\gamma,\nu} \bu(t)\|_{L^2}
    \\&\leq \notag
    C_{\gamma,\nu}\|A_{\gamma,\nu} e^{-A_{\gamma,\nu}(t-t_X)}\bu(t_X)\|_{L^2}
    \\&\qquad+ \notag
    C_{\gamma,\nu} \Big\|\int_{t_X}^t A_{\gamma,\nu}e^{-(t-s)A_{\gamma,\nu}}[\bu(s)\times(\nabla\times\bu(s))+\bbf]\,ds\Big\|_{L^2}
    \\&\leq \notag
    C_{\gamma,\nu}\pnt{(t-t_X)^{-1}\|\bu(t_X)\|_{L^2}
    +\|\bbf\|_{L^2}}
    \\&\qquad+ \notag
    C_{\gamma,\nu}\int_{t_X}^t\pnt{\frac{1/2}{e(t-s)}}^{1/2}\|\bu(s)\times(\nabla\times\bu(s))\|_{H^1}\,ds
    \\&\leq \notag
    C_{\gamma,\nu}\pnt{\gamma\sqrt{\rho_0}
    +\|\bbf\|_{L^2}}
    +
    C_{\gamma,\nu}\rho_1^{1/2}\int_{t_X}^t(t-s)^{-1/2}\|\bu(s)\|_{H^2}\,ds,
\end{align}
where we used \eqref{H1_leq_H1_H2} and $\int_\sigma^t e^{-(t-s)A_{\gamma,\nu}}\bbf\,ds = A_{\gamma,\nu}^{-1}(I-e^{-(t-\sigma)A_{\gamma,\nu}})\bbf$ so that 
\[\Big\|\int_\sigma^t A_{\gamma,\nu}e^{-(t-s)A_{\gamma,\nu}}\bbf\,ds\Big\|_{L^2}\leq \|(I-e^{-(t-\sigma)A_{\gamma,\nu}})\bbf\|_{L^2}\leq\|\bbf\|_{L^2}.\]
By Gr\"onwall's inequality in integral form, we obtain for any $t\geq t_X+\tau$,
\begin{align}\label{attractor_H2_est}
\|\bu(t)\|_{H^2}
&\leq
    C_{\gamma,\nu}\pnt{\gamma\sqrt{\rho_0}
    +\|\bbf\|_{L^2}}
    \exp\pnt{
    2C_{\gamma,\nu}\rho_1^{1/2}\sqrt{t-t_X}}=:\rho_2(t).
\end{align}
Note that for any $\bu_0\in B_X$, we may take the absorption time to be $t_X=0$.  
Thus, \eqref{attractor_H2_est} implies that  $\sup_{\bu_0\in B_X}\|S(\tau)\bu_0\|_{H^2}\leq\rho_2(\tau)<\infty$.  It follows that $S(\tau)B_X\subset H^2\subset\subset X$.  Hence, the set 
\begin{align}\label{def_attractor_X}
\mathcal{A}_X:=\bigcap_{t\geq 0}\overline{\bigcup_{s\geq t}S(s)B_X}^{\,X}
\end{align}
is a non-empty compact invariant set in $X$ that attracts every bounded set in $X$ uniformly.  Hence, we have proved the following theorem.
\begin{theorem}\label{thm_global_sttractor} Let $\gamma>0$, $\nu>0$, $\bbf\in\dot{H}^4(\nT^3)$.  Then the system \eqref{Burgers_twist_damping_all} has a global attractor defined by \eqref{def_attractor_L2} in the phase space $H^1(\nT^3)$ equipped with the $L^2(\nT^3)$ topology. Restricting to the space $X$  (see \eqref{X_Banach_space}) with the stronger norm $\|\cdot\|_{X}$, there exists a global attractor in the $X$ topology, given by \eqref{def_attractor_X}.
\end{theorem}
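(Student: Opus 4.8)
The plan is to establish the three standard ingredients of the theory of global attractors (see, e.g., \cite{Robinson_2001,Temam_1995_Fun_Anal}) separately in each of the two phase spaces: (i) $S(t)$, the map sending initial data to the solution of \eqref{Burgers_twist_damping_all} at time $t$, is a well-defined continuous semigroup; (ii) there is a bounded absorbing set; and (iii) the semigroup is asymptotically compact in the relevant topology. Granting these, $\mathcal{A}$ (resp.\ $\mathcal{A}_X$) arises as the $\omega$-limit set of the absorbing set, and the intersection-of-closures expressions \eqref{def_attractor_L2} and \eqref{def_attractor_X} are its usual representation.

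For the attractor in $H^1$ equipped with the $L^2$ topology: the semigroup $S(t)\colon H^1\to H^1$ is global by the $\gamma>0$ analogue of Theorem~\ref{thm_Burgers_twist_GWP}, and Theorem~\ref{thm_strong_local} (continuous dependence on the data) makes $S(t)\colon H^1\to L^2$ continuous. For the absorbing set I would add the damping to the $L^2$ energy identity \eqref{en_eq}; because $(\bomega\times\bu)\cdot\bu\equiv0$ the nonlinearity drops out, and Gr\"onwall's inequality on the resulting differential inequality $\frac{d}{dt}\|\bu\|_{L^2}^2+2\gamma\|\bu\|_{L^2}^2+\nu\|\nabla\bu\|_{L^2}^2\le\frac{2}{\nu}\|\bbf\|_{H^{-1}}^2$ yields an $L^2$-ball $B_0\subset H^1$ absorbing every bounded subset of $H^1$, with an explicit absorption time. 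Asymptotic compactness is the delicate point: as explained in Remark~\ref{remark_attractor_needs_poincare}, no uniform-in-time $H^1$ bound is available (the mean is not conserved, so Poincar\'e is unavailable, and the nonlinearity's derivatives cannot be transferred onto test functions). Instead I would integrate the energy inequality over a window $[t,t+1/\gamma]$ to bound $\int\|\nabla\bu\|_{L^2}^2$ there, estimate $\partial_t\bu$ in $L^{4/3}(t,t+1/\gamma;H^{-1})$ exactly as for the 3D Navier--Stokes equations, and invoke the Aubin--Lions compactness theorem; evaluating at a Lebesgue point of a convergent subsequence and pushing forward by the continuity of $S(\cdot)\colon H^1\to L^2$ shows $S(t)B_0$ is relatively $L^2$-compact for all large $t$, which produces $\mathcal{A}$.

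For the attractor in $X=H^1\cap L^\infty$, restricting the phase space unlocks the maximum principle. Lemma~\ref{lemma_maximum_principle_viscous_damped} gives $\|\bu(t)\|_{L^\infty}\le e^{-\gamma t}\|\bu_0\|_{L^\infty}+\gamma^{-1}\|\bbf\|_{L^\infty}$, hence an $L^\infty$ absorbing ball of radius $\rho_\infty$. With that bound in hand, testing \eqref{Burgers_twist_damping} against $-\triangle\bu$ and estimating the nonlinear term by $\rho_\infty\|\nabla\bu\|_{L^2}\|\triangle\bu\|_{L^2}$ gives a differential inequality for $\|\nabla\bu\|_{L^2}^2$ whose forcing coefficient may fail to beat $2\gamma$; I would therefore use the uniform-Gr\"onwall / time-averaging device — integrate over windows of length $1/\gamma$, insert the window bound on $\int\|\nabla\bu\|_{L^2}^2$, and average over the window's left endpoint — to extract a time-independent $H^1$ bound, and thus a bounded absorbing set $B_X$ in $X$. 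For asymptotic compactness I would derive a uniform-in-time $H^2$ bound from the variation-of-constants formula with $A_{\gamma,\nu}:=\gamma-\nu\triangle$, using the smoothing estimates $\|A_{\gamma,\nu}^r e^{-tA_{\gamma,\nu}}\|_{L^2\to L^2}\le(r/et)^r$, controlling the nonlinearity in $H^1$ via \eqref{H1_leq_H1_H2}, and closing with Gr\"onwall's inequality in integral form; since $H^2(\nT^3)\subset\subset X$ by Rellich--Kondrachov together with Agmon's inequality \eqref{agmon} (with $s_1=1$, $s_2=2$), this makes $S(1/\gamma)B_X$ precompact in $X$ and delivers $\mathcal{A}_X$.

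The main obstacle throughout is the asymptotic compactness, and beneath it the absence of a uniform higher-order bound obtainable from $H^1$ data alone. This is precisely what forces the two-tier statement: in the $H^1$ phase space one can only extract $L^2$-asymptotic compactness via Aubin--Lions evaluated at a Lebesgue point, so only $L^2$-attraction is claimed there; and to obtain genuine $X$-attraction one must shrink the phase space to $X$, where the maximum principle supplies the $L^\infty$ control needed both to close the $H^1$ estimate and to bootstrap to the compactness-giving $H^2$ bound. The remaining pieces — the energy and maximum-principle computations, the semigroup smoothing bounds, and the appeal to the abstract attractor theorem — are routine.
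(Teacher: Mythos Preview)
Your proposal is correct and follows the paper's own argument essentially step for step: the $L^2$ absorbing ball from the damped energy identity, the window estimate plus $L^{4/3}(H^{-1})$ bound on $\partial_t\bu$ and Aubin--Lions evaluated at a Lebesgue point for the first attractor; and for $\mathcal{A}_X$, the damped maximum principle, the $-\triangle\bu$ test with time-averaging over windows of length $1/\gamma$, and the mild-solution/$A_{\gamma,\nu}$ smoothing estimate closed by integral Gr\"onwall to reach $H^2\subset\subset X$. Even the specific tools you cite (\eqref{H1_leq_H1_H2}, Agmon with $s_1=1$, $s_2=2$, the $(r/et)^r$ bound) coincide with those the paper uses.
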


\section{The Inviscid Case}\label{sec_inviscid_case}
 \noindent
In this section, we consider the inviscid unforced version of \eqref{Burgers_twist}; i.e., 
\begin{align}\label{Burgers_twist_inviscid}
 \pd{\bu}{t}+\bomega\times\bu &= \mathbf{0},\qquad 
 \bomega:=\nabla\times\bu,\qquad
 \bu(0)=\bu_0.
\end{align}
We show that there exist solutions to this equation which blow up in finite time.  However, the question of local well-posedness for arbitrary smooth initial data is less clear, as we now discuss.
\subsection{Local well-posedness for the inviscid case}\label{sec_well_posedness_inviscid}
The inviscid equation \eqref{Burgers_twist_inviscid} has several analytical difficulties associated with it.  We discuss them here, as \eqref{Burgers_twist_inviscid} seems to provide a simple example for which many textbook approaches fail, and one must use more powerful theorems.  At first glance, \eqref{Burgers_twist_inviscid} may seem like a very nice equation, as the analogue of \eqref{Burgers_vor_dot} is simply $\frac12\pd{}{t}|\bu|^2=0$, and hence the equation is length-preserving, and in particular, smooth solutions conserve every $L^p$ norm, $1\leq p\leq\infty$, which may allow one to extract nice properties.  However, such an argument assumes that solutions exist.  How might one establish the existence of solutions?  If one tries Galerkin methods (as we did in the viscous case), an immediate obstacle is that the length-preserving property does not hold for the Galerkin system.  Moreover, the lack of an inviscid term prevents the use of Aubin-Lions compactness, but one can try to show that the sequence of Galerkin approximations forms a Cauchy sequence, as is commonly done for the Euler equations (i.e., \eqref{NSE} with $\nu=0$).  Roughly speaking, this works for the Euler equations, because one has the vanishing of higher-order derivatives in energy estimates, i.e., 
$(\bu\cdot\nabla\partial_\alpha\bu,\partial_\alpha \bu)=0$, since $\nabla\cdot\bu=0$.  However, 
for \eqref{Burgers_twist_inviscid}, one instead has only 
$((\nabla\times\bu)\times\partial_\alpha\bu,\partial_\alpha \bu)=0$, but
$((\nabla\times\partial_\alpha\bu)\times\bu,\partial_\alpha \bu)\neq0$; hence one must deal directly with the higher-order terms, as there is evidently no Kato-Ponce-type inequality to close the estimates on derivatives.  This fact causes difficulties with other approaches to \eqref{Burgers_twist_inviscid}, such as fixed-point methods, Picard iteration, and vanishing viscosity approaches.  Note that the inviscid Burgers equation (\eqref{Burgers} with $\nu=0$), suffers from the same difficulties, but it also has an advective structure, and hence is amenable to the method of characteristics.  Therefore, one might be tempted to exploit the geometric structure of \eqref{Burgers_twist_inviscid} directly.  In the 2D case, one might take the representation \eqref{inviscid_implicit_2D} in Section \ref{subsec_rot_num} as a starting place, but we do not do this here.  One can establish local-in-time analytic solutions for analytic initial data immediately by a direct application of the Cauchy-Kovalevskaya Theorem, yielding the following.
\begin{theorem}
    Let $\bu_0\in C^\omega(\nT^3)$ (i.e., it is real-analytic).  Then there exists a time $T>0$ such that the Cauchy problem \eqref{Burgers_twist_inviscid} has a unique solution $\bu\in C^\omega([-T,T]\times\nT^3))$.
\end{theorem}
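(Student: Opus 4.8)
The plan is to view \eqref{Burgers_twist_inviscid} as an abstract Cauchy problem $\partial_t\bu=N(\bu)$, $\bu(0)=\bu_0$, with the quadratic nonlinearity $N(\bu):=-(\nabla\times\bu)\times\bu$, and to invoke the abstract Cauchy--Kovalevskaya theorem (in the Ovsyannikov--Treves--Nirenberg--Nishida form) on a scale of Banach spaces of real-analytic $2\pi$-periodic functions. For $\rho\in(0,\rho_0]$, I would introduce
\begin{align*}
X_\rho:=\Big\{\bv=\sum_{\bk\in\nZ^3}\widehat{\bv}_\bk e^{i\bk\cdot\bx}\ \Big|\ \overline{\widehat{\bv}_\bk}=\widehat{\bv}_{-\bk},\ \ \|\bv\|_\rho:=\sum_{\bk\in\nZ^3}|\widehat{\bv}_\bk|\,e^{\rho|\bk|}<\infty\Big\}.
\end{align*}
Each $X_\rho$ is a Banach algebra, $\|\bv\bw\|_\rho\leq\|\bv\|_\rho\|\bw\|_\rho$ (from $|\bk|\leq|\bj|+|\bk-\bj|$ and the triangle inequality for the convolution), and the scale is nested with $\|\cdot\|_\rho\leq\|\cdot\|_{\rho'}$ for $\rho\leq\rho'$. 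Since $\bu_0$ is real-analytic on $\nT^3$, its Fourier coefficients decay exponentially, so $\bu_0\in X_{\rho_0}$ for some $\rho_0>0$.

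The key estimate — the one that makes the abstract theorem applicable — is the \emph{loss of one derivative} in the analytic scale: since $\sup_{x\geq0}x\,e^{-ax}=1/(ea)$, one has $\|\nabla\bv\|_\rho\leq\frac{C}{\rho'-\rho}\|\bv\|_{\rho'}$ for $0<\rho<\rho'\leq\rho_0$. Combining this with the algebra property, and writing $N(\bu)-N(\bv)=-(\nabla\times(\bu-\bv))\times\bu-(\nabla\times\bv)\times(\bu-\bv)$, I obtain
\begin{align*}
\|N(\bu)-N(\bv)\|_\rho\leq\frac{C}{\rho'-\rho}\big(\|\bu\|_{\rho'}+\|\bv\|_{\rho'}\big)\,\|\bu-\bv\|_{\rho'},\qquad 0<\rho<\rho'\leq\rho_0,
\end{align*}
and in particular $\|N(\bu)\|_\rho\leq\frac{C}{\rho'-\rho}\|\bu\|_{\rho'}^2$. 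These are exactly the hypotheses of the abstract Cauchy--Kovalevskaya theorem for quadratic nonlinearities, which then yields constants $a>0$ and $T>0$ (depending only on $\|\bu_0\|_{\rho_0}$, $\rho_0$, $C$) and a unique solution $\bu$ of $\partial_t\bu=N(\bu)$, $\bu(0)=\bu_0$, with $\bu(t)\in X_{\rho_0-a|t|}$ for $|t|<T$, the map $t\mapsto\bu(t)$ being real-analytic into $X_{\rho_0-a|t|-\delta}$ for every small $\delta>0$. Running the equation backward ($s=-t$, which only flips the sign of $N$) covers $t\in[-T,0]$, so after possibly shrinking $T$ one has $\bu$ on all of $[-T,T]$.

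It then remains to promote the separate analyticity in $t$ and in $\bx$ to \emph{joint} real-analyticity on $[-T,T]\times\nT^3$. Differentiating $\partial_t\bu=N(\bu)$ repeatedly in $t$ and estimating in the scale $\{X_\rho\}$ (using the algebra property and the derivative-loss bound, with radii shrinking geometrically toward a fixed sub-scale $X_{\rho_*}$) gives, by induction, local bounds $\|\partial_t^m\bu(t)\|_{\rho_*}\leq M^{m+1}m!$; together with the elementary inequality $|\partial_\bx^\beta\bv(\bx)|\leq\beta!\,(C/\rho_*)^{|\beta|}\|\bv\|_{\rho_*/2}$ valid for $\bv\in X_{\rho_*/2}$, this produces a joint estimate $|\partial_t^m\partial_\bx^\beta\bu(t,\bx)|\leq m!\,\beta!\,M^{m+1}(C/\rho_*)^{|\beta|}$ on a neighbourhood of each point, hence joint analyticity. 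Alternatively, one may simply note that the abstract solution is automatically jointly analytic, or run the classical Cauchy--Kovalevskaya theorem in $(t,\bx)$-coordinates near each point of $\{0\}\times\nT^3$ — the system is already in Kovalevskaya normal form with polynomial, hence analytic, right-hand side — and patch the local solutions using compactness of $\nT^3$ and local uniqueness.

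Uniqueness in the stated class is then immediate: any $\bu\in C^\omega([-T,T]\times\nT^3)$ lies in some scale $X_\rho$ with $\rho>0$, uniformly in $t$ by compactness, and the Lipschitz bound above combined with a Gr\"onwall argument for $\|\bu(t)-\bv(t)\|_{\rho-a|t|}$ forces the difference of two such solutions to vanish. I do not expect a genuine obstacle here; the one point requiring care is the bookkeeping of the shrinking radius of analyticity so that all estimates live on a common nontrivial sub-scale — which is precisely why the result can be called an immediate application.
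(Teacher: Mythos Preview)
Your proposal is correct and follows essentially the same approach as the paper: the paper simply states that the result follows ``immediately by a direct application of the Cauchy-Kovalevskaya Theorem'' and gives no further details. Your write-up is a careful fleshing-out of exactly that application (via the abstract Ovsyannikov--Nirenberg--Nishida scale-of-Banach-spaces formulation), so there is nothing to compare---you have supplied the details the paper omits.
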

For rougher initial data (for example, $\bu_0\in H^s(\nT^n)$, $s>5/2$), system \eqref{Burgers_twist_inviscid} is significantly more challenging that the standard inviscid Burgers equation (\eqref{Burgers} with $\nu=0$), as it exhibits unavoidable derivative loss\footnote{One can avoid derivative loss in Burgers equation, thanks to manipulations such as  $((\bu\cdot\nabla)\partial_{\alpha}\bu,\partial_{\alpha}\bu)=\frac12\int ((\bu\cdot\nabla)|\partial_{\alpha}\bu|^2=-\frac12\int (\nabla\cdot\bu)|\partial_{\alpha}\bu|^2$, so that the highest-order derivative is order $|\alpha|$, but for corresponding rotational quantity $((\nabla\times\partial_\alpha\bu)\times\bu, \partial_\alpha\bu)$, this does not seem possible, and the highest-order derivative is order $|\alpha|+1$.}, indicating that the Nash-Moser framework may be the only hope, but the linearized problem still suffers derivative loss, and the 
A maximum principle then still holds for these smooth solution on a short time interval, but (as we will see in the next section), it is not enough to prevent finite-time blow-up of solutions.  

We next show that there exist solutions for certain initial data which blow up in finite time.
\subsection{A finite-time singularity}\label{subsec_finite_time_blow_up} We seek a smooth solution to \eqref{Burgers_twist_inviscid} in the form $\bu(\bx,t) = (v,v,w)$, where $v = v(x,t)$, $w = w(x,t)$, and we have denoted the scalar $x=x_1$.  With this notation, system \eqref{Burgers_twist_inviscid} reduces to the following system (cf. \eqref{lamb_vector_explicit}), with given smooth initial data $v(0)=v_0$ and $w(0)=w_0$.
\begin{empheq}[left=\empheqlbrace]{align}
\begin{split}\label{inviscid_vw}
     v_t =\partial_t u_1&= w\partial_x w+v\partial_x v=\tfrac12\partial_x (w^2+v^2),\\
    v_t =\partial_t u_2&=-v\partial_x v=-\tfrac12\partial_x v^2,\\
    w_t =\partial_t u_3&= -v\partial_x w.   
\end{split}
\end{empheq}
Note that the second equation is exactly the form of the usual 1D inviscid Burgers equation, but it is possibly constrained by the first and third equations.  We check that $w$ can be chosen so as not to further constrain $v$. Subtracting the second equation from the first and rearranging, we obtain
\begin{align}\label{inviscid_conserved_vw}
    \partial_x(2v^2 + w^2) = 0.
\end{align}
Hence, $2v^2 + w^2$ is constant in space, say $c(t)=2v^2 + w^2$.  Moreover, 
\begin{align*}
    c'(t) 
    &= 
    4vv_t + 2ww_t
    = 
    2v\partial_x (w^2+v^2) - 2wv\partial_x w
    =
    v\partial_x (2v^2 + w^2) = 0
\end{align*}
by \eqref{inviscid_conserved_vw}.  Hence, $c(t)\equiv c$ is constant in time as well, so its value is fixed by the initial data; i.e., $c=2v_0^2 + w_0^2$.  It follows that we must have
\begin{align}\label{inviscid_conserved_vw_w_form}
    w = \pm\sqrt{2(v_0^2-v^2) + w_0^2}.
\end{align}
It is straight-forward to check that this choice of $w$ satisfies the first and third equation, so long as $v$ satisfies the second equation.  
We now construct a smooth solution to \eqref{inviscid_vw} which blows up in finite time as follows.  Let $v_0=v_0(x)$ be a non-constant smooth periodic function (e.g., $v_0(x)=\sin(x)$).  Let $v$ be the solution to the Burgers equation $v_t=-vv_x$ with initial data $v_0$.  It is well-known that $v$ is smooth for a short time, but that it develops a shock in finite time; i.e., $\|v_x(t)\|_{L^\infty}$ become infinite in finite time.   
Define $w$ by \eqref{inviscid_conserved_vw_w_form} (choosing, say, the sign to be positive), and choose $w_0=w_0(x)$ to be smooth, periodic, and bounded away from zero, so that $w$ is periodic and is defined and smooth for at least a short time.  Then either $v$ or $w$ becomes singular in finite time, as desired.  Hence, letting with solution $\bu=(v,v,w)$, we have proved the 3D case of the following result.
\begin{theorem}\label{thm_inviscid_blow_up}
There exists a smooth solution to \eqref{Burgers_twist_inviscid} in the 2D or 3D case which becomes singular in finite time.
\end{theorem}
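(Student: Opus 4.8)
The three-dimensional case is already established above, so the plan is to treat the two-dimensional case by an analogous reduction adapted to two components. First I would look for a solution of \eqref{Burgers_twist_inviscid} of the special form $\bu(\bx,t)=(v(x,t),w(x,t))$ depending only on the single variable $x:=x_1$. Since $u_1$ is then independent of $y$, the scalar vorticity is $\bomega=\partial_x w$, and a direct computation from \eqref{lamb_vector_explicit} gives $\bomega\times\bu=(-w\,\partial_x w,\ v\,\partial_x w)$, so that \eqref{Burgers_twist_inviscid} collapses to the pair of scalar equations
\begin{align*}
    v_t = w\,\partial_x w,\qquad w_t = -v\,\partial_x w .
\end{align*}
Multiplying the first equation by $v$ and the second by $w$ and adding yields $\partial_t(v^2+w^2)=0$, so $v^2+w^2$ is constant in time; choosing initial data with $v_0^2+w_0^2\equiv R^2$ for a constant $R>0$ (for instance $\bu_0=(R\cos x,\,R\sin x)$) forces $v^2+w^2\equiv R^2$ at all times, consistently with the length-preserving identity $\tfrac12\partial_t|\bu|^2=0$ for \eqref{Burgers_twist_inviscid}.

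Next I would pass to polar coordinates on the target, writing $v=R\cos\theta$ and $w=R\sin\theta$ for a phase function $\theta=\theta(x,t)$. Substituting into either scalar equation above (the two being consistent) shows that they are equivalent to the single scalar conservation law
\begin{align*}
    \theta_t + \partial_x\pnt{R\sin\theta} = 0 ,
\end{align*}
whose flux $g(\theta)=R\sin\theta$ satisfies $g''(\theta)=-R\sin\theta$, i.e.\ is genuinely nonlinear wherever $\sin\theta\neq0$. I would then invoke the classical method of characteristics for scalar conservation laws: the characteristics are the straight lines $t\mapsto x_0 + t\,R\cos\theta_0(x_0)$, along which $\theta$ is constant, and $\theta$ remains as smooth as its datum until two characteristics first cross, which happens at the finite time $T^*=\bigl(-\min_{x}\tfrac{d}{dx}[R\cos\theta_0(x)]\bigr)^{-1}$ whenever that minimum is negative. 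Taking, e.g., $\theta_0(x)=x$, the characteristic speed is $R\cos x$, strictly decreasing on $(0,\pi)$, so $T^*=1/R<\infty$ and $\theta_x\to\infty$ at $T^*$ while $\theta$ stays bounded; moreover the invariance $\theta\mapsto\theta+2\pi$ of the conservation law together with uniqueness of the $C^1$ solution forces $\theta(x+2\pi,t)=\theta(x,t)+2\pi$, so $\cos\theta$ and $\sin\theta$ are $2\pi$-periodic in $x$. Reconstructing $\bu=(R\cos\theta,R\sin\theta)$ then yields a solution of \eqref{Burgers_twist_inviscid} that is smooth and $2\pi$-periodic on $[0,T^*)$.

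The one point that needs care is to confirm that the gradient catastrophe of the scalar conservation law is genuinely a singularity of $\nabla\bu$ and not an artifact of the polar substitution; this is immediate from $\partial_x u_1=-R\sin\theta\,\theta_x$ and $\partial_x u_2=R\cos\theta\,\theta_x$, which give $|\partial_x u_1|^2+|\partial_x u_2|^2=R^2\theta_x^2$, so $\|\nabla\bu(t)\|_{L^\infty}\to\infty$ precisely as $\|\theta_x(t)\|_{L^\infty}\to\infty$ (even though $|\bu|\equiv R$ throughout). The remaining steps — the equivalence of the two scalar equations with the conservation law, smoothness of $\theta$ up to $T^*$, and periodicity — are routine, so there is no single ``hard'' step: the crux is simply spotting the right reduction, namely that a genuinely two-dimensional rotational solution of prescribed constant magnitude is governed by a scalar conservation law for its phase. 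If one prefers to bypass even the mild subtlety of the periodic characteristic argument, one can instead feed the analytic datum $\bu_0=(R\cos x,R\sin x)$ into the Cauchy--Kovalevskaya theorem stated above and use uniqueness to identify that analytic solution with the one built by characteristics, with blow-up following as before. Since this $\bu$ is also a ($z$-independent) solution of the 3D equation \eqref{Burgers_twist_inviscid}, the same construction simultaneously reproves the 3D case.
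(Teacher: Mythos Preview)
Your proof is correct and follows essentially the same route as the paper: the same one-variable reduction $\bu=(v(x,t),w(x,t))$, the same conserved quantity $v^2+w^2$, and the same initial data $(R\cos x,R\sin x)$. The only difference is cosmetic: the paper uses the differentiated constraint $vv_x+ww_x=0$ to rewrite the first equation directly as the inviscid Burgers equation $v_t+vv_x=0$, whereas you introduce the phase $\theta$ and obtain the equivalent scalar conservation law $\theta_t+\partial_x(R\sin\theta)=0$ --- these are the same equation under $v=R\cos\theta$.
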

The 2D case of course implies the 3D case, but it can proceed along slightly different lines. We present it separately, as it also allows us to exhibit a slightly different exact solution.  
In 2D, system \eqref{Burgers_twist_inviscid} becomes
\begin{align}\label{Burgers_twist_inviscid_2D}
\pd{\bu}{t}+\omega\bu^\perp=\mathbf{0}, \quad \omega := \partial_xu_2-\partial_y u_1,\quad \bu^\perp:=\binom{-u_2}{u_1}.
\end{align}
Let us look for a 1D solution of the form $u_1(\bx,t)=u(x,t)$, $u_1(\bx,t)=v(x,t)$.  System \eqref{Burgers_twist_inviscid_2D} becomes
\begin{subequations}\label{Burgers_twist_inviscid_2D_1D}
\begin{empheq}[left=\empheqlbrace]{align}
\label{Burgers_twist_inviscid_2D_1Du}
u_t- vv_x&=0,\\
\label{Burgers_twist_inviscid_2D_1Dv}
v_t+uv_x&=0.
\end{empheq}
\end{subequations}
Consider this system with smooth periodic initial data, $u(x,0)=u_0(x)=\cos(x)$, $v(x,0)=v_0(x)=\sin(x)$.  Then $u_0^2+v_0^2=1$. From \eqref{Burgers_twist_inviscid_2D_1D}, a quick calculation shows that $\pd{}{t}(u^2+v^2)=0$; hence, $(u(x,t))^2+(v(x,t))^2 = (u_0(x))^2+(v_0(x))^2=1$.  Differentiating in $x$ yields $uu_x+vv_x=0$.  Hence, \eqref{Burgers_twist_inviscid_2D_1Du} becomes the standard Burgers equation $u_t+uu_x=0$, which develops a singularity in finite time with the given initial data.  The function $v$ is then determined by \eqref{Burgers_twist_inviscid_2D_1Dv}, noting that the constraint $u^2+v^2=1$ is preserved by system \eqref{Burgers_twist_inviscid_2D_1D}, and hence not an additional constraint.  Hence, we have also shown the 2D case of Theorem \ref{thm_inviscid_blow_up}.
\section{Kuramoto-Sivashinsky with a twist}\label{sec_rotKSE}
\noindent
The Kuramoto-Sivashinsky equation (KSE) is a well-known model for many physical phenomena, including instabilities is laminar flame fronts \cite{Sivashinsky_1977}, flow of fluids down inclined planes \cite{sivashinsky1980vertical}, plasmas \cite{cohen1976non,laquey1975nonlinear}, reaction-diffusion systems \cite{Kuramoto_Tsuzuki_1975,Kuramoto_Tsuzuki_1976}, and a wide variety of generic instabilities after the underlying system undergoes a critical bifurcation \cite{misbah1994secondary}.  However, in dimensions higher than one, fundamental questions about its solutions remain open, such as whether solutions are unique, and whether they exist globally in time.  Even in one dimension, major questions remain open about the large-time dynamics of the system.  Note that physically, the KSE are 2D equations, as they model the evolution of a surface.  Hence, we confine our study to the 2D case, but the results here could surely be extended to the 3D case.
Besides the many physical applications of the KSE, the KSE is often studied by researchers interested in the incompressible Navier-Stokes equations (NSE) in hopes of gaining insight into the NSE, due in part to the similar-looking non-linear term.  However, strong differences exist between these equations.  For instance, while obtaining $L^2$ bounds on solutions of the NSE is trivial as the nonlinear term vanishes due to the incompressiblity, obtaining an $L^2$ bound on solutions to the multi-dimensional KSE is the main barrier preventing progress, as the nonlinear term does not vanish in energy estimates \cite{Ambrose_Mazzucato_2018,Biswas_Swanson_2007_KSE_Rn}.  Moreover, the 1D KSE is highly diffusive at small scales, and the stability of the equation relies on a strong forward cascade of energy due to the nonlinear term; hence, approaches to constructing better-behaved approximate equations that work for the NSE, such as adding higher-order diffusion or weakening the nonlinearity via filtering or smoothing, fail for the KSE (see, e.g.,  \cite{Kostianko_Titi_Zelik_2018}, and the discussions in \cite{Enlow_Larios_Wu_2023_NSE}).  (On the other hand, note that linear damping has a stabilizing effect on the 1D KSE, as investigated in \cite{misbah1994secondary}.)   Thus, construction of a well-posed approximate or phenomenological model of the KSE requires new approaches.  
Some modified versions of the KSE have shown to be ammeniable to analysis.  For example, in  \cite{Larios_Yamazaki_2020_rKSE}, a linear, anisotropic modification of the 2D KSE was proposed and shown to be globally well-posed.  This model modified one component of the vector form of the 2D KSE (breaking the gradient-structure of the equation), allowing for a maximum principle to be established for that component.  The authors were able to exploit this fact, along with the structure of the nonlinearity, obtain their global well-posedness results, although the attractor theory for this modified system remains open.  In \cite{Enlow_Larios_Wu_2023_NSE}, the authors proposed and studied the first known globally-well-posed PDE model which approximates solutions to the 2D KSE with arbitrary precision as the ``calming'' parameter $\epsilon\rightarrow0^+$. This was done by introducing a smooth truncation operator to the nonlinear term.  However, again the global attractor theory remains elusive for this modified model.
\subsection{A Rotational Version of the KSE}
  In the present work, a different modification of the 2D and 3D KSE (in vector form) is proposed.  Rather than modifying the linear term, the nonlinear term is modified to the Lamb vector.  While this modification also breaks the nonlinear structure, the new nonlinear term retains many of the qualities of the nonlinear term for the 2D KSE: it is quadratic,  and it is given by a product of the solution and a spatial derivative. However, as opposed to the 2D KSE itself, we are able to prove that this model is globally well-posed, and that the resulting system has a global attractor.  Hence, it is hoped that this new model may yield insight into the multi-dimensional KSE.
The $n$-dimensional Kuramoto-Sivashinsky system in dimensionless vector form is given by
\begin{align}
\label{KSE}
\pd{\bu}{t}+(\bu\cdot\nabla)\bu + \lambda\triangle\bu + \triangle^2\bu &= \mathbf{0},
\end{align}
with appropriate boundary and initial conditions, where $\lambda\geq0$ can be thought of as a dimensionless number\footnote{The notation ``$\lambda$'' goes back to at least \cite{Liu_Krstic_1999_lambda_parameter_BC_KSE}, where the authors refer to it as an ``anti-diffusion'' parameter. Some papers, such as \cite{Frisch_She_Thual_1986_JFM_viscoelastic}, use a dimensionless formulation, but with a dimensionless hyperviscosity parameter on the fourth-order term.  Many authors take $\lambda=1$ and instead consider the $L$-periodic domain, with unstable modes occurring for $|\bk|\geq \frac{L}{2\pi}$, but the dimensionless $\lambda$ notation has the advantage of acting more like a Reynolds number, fixing the domain, and scaling the coefficients, with larger values of $\lambda$ corresponding to greater instability.} controlling the stability of the linear term; namely, for wave number $\bk$,  $|\bk|>\sqrt{\lambda}$ implies linear stability, whereas $|\bk|<\sqrt{\lambda}$ correspond to linearly stable modes.  Unstable modes are a source of energy for the equation, and hence the equation is often considered without forcing.  The scalar form of the equations is given by
\begin{align}
\label{KSE_scalar}
\pd{\phi}{t}+\tfrac{1}{2}|\nabla\phi|^2 + \lambda\triangle\phi + \triangle^2\phi &= 0,
\end{align}
By setting $\bu=\nabla\phi$ in \eqref{KSE_scalar}, one formally obtains \eqref{KSE}.  
The KSE was first derived in \cite{Kuramoto_Tsuzuki_1976,Sivashinsky_1977} (see also \cite{Kuramoto_1978,Sivashinsky_1980_stoichiometry,sivashinsky1980vertical}).  
As discussed above, this system is notoriously difficult to handle in dimension $n>1$, and very little is known about its well-posedness.  Motivated by the identity \eqref{vec_identity}, we propose the following modification of system \eqref{KSE} on the periodic domain $\nT^2$.  Namely,
\begin{subequations}\label{vKSE}
\begin{empheq}[left=\empheqlbrace]{align}
\label{vKSE_mo}
\pd{\bu}{t}+\bomega\times\bu + \lambda\triangle\bu + \triangle^2\bu &= \mathbf{0},
\qquad
\bomega:=\nabla\times\bu,
\\
\bu(\bx,0)&=\bu_0(\bx).
\end{empheq}
\end{subequations}
This formulation has several points of interest.  Comparing with \eqref{Burgers_twist}, the hyperdiffusion from the term $\triangle^2\bu$ will allow us to prove global well-posedness without the need for a maximum principle, thanks to the vanishing of the nonlinear term in energy estimates (a property not enjoyed by the usual 2D or 3D KSE, for which global well-posedness remains a challenging open problem).  We will also be able to show the existence of a compact global attractor in $L^2(\Omega)$ for sufficiently small $\lambda>0$ (a property shared with the 1D KSE).  Notice also that \eqref{vKSE} is not in general to be the vectorial form of any scalar equation such as \eqref{KSE_scalar}, and hence it should be thought of as at most a phenomenological cousin of \eqref{KSE}, not a direct analogue.  The higher-order boundary conditions, along with the hyper-diffusion imply that we do not need especially high-order estimates, unlike in the case of \eqref{Burgers_twist}. 
Note that thanks to identity \eqref{vec_identity}, the original KSE system \eqref{KSE} can be written in the form 
\begin{align*}
 \pd{\bu}{t}+\bomega\times\bu + \tfrac12\nabla|\bu|^2 + \lambda\triangle\bu + \triangle^2\bu &= \bbf,
\end{align*}
Thus, the new system \eqref{vKSE} can be thought of as the original KSE system \eqref{KSE} minus the gradient term $\tfrac12\nabla|\bu|^2$.  If instead we were to throw away the term $\bomega\times\bu$, the resulting system would simply be a vectorial version of equation \eqref{KSE_scalar}; hence, our interest lies in examining system \eqref{vKSE}.
\begin{definition}
Let $\bu_0\in L^2(\nT^2)$ and let $T>0$.  We say that $\bu$ is a \textit{solution} to \eqref{vKSE} on the interval $[0,T]$ if $\bu\in L^\infty([0,T];H^1(\nT^2))\cap L^2(0,T;H^3(\nT^2))$, $\partial_t\bu\in L^2(0,T;H^{-1}(\nT^2))$, and $\bu$ satisfies \eqref{vKSE_mo} in the sense of $L^2(0,T;H^{-1}(\nT^2))$, with initial data satisfied in the sense of $C([0,T];L^2(\nT^2))$.
\end{definition}
\begin{theorem}\label{thm_GWP_vKSE}
 Let $\bu_0\in L^2(\nT^2)$ and let $T>0$. Then  solutions to \eqref{vKSE} on $[0,T]$ exist and are unique.
\end{theorem}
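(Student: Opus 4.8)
\emph{Overall approach.} The plan is to construct a solution by a Galerkin scheme, in the spirit of the proof of Theorem~\ref{thm_strong_local}, the whole argument being powered by two structural facts: the pointwise cross-product orthogonality $(\bomega\times\bu)\cdot\bu=0$ from \eqref{cross_prod_ortho}, which renders the nonlinearity invisible to $L^2$-type estimates, and the fourth-order hyperdiffusion $\triangle^2\bu$, which --- unlike the second-order diffusion in \eqref{Burgers_twist} --- is strong enough to swallow the nonlinearity already at the $H^1$ level, so that no maximum principle is needed. First I would introduce, for each $N\in\nN$, the finite-dimensional system $\partial_t\bu^N=-P_N(\bomega^N\times\bu^N)-\lambda\triangle\bu^N-\triangle^2\bu^N$ with $\bu^N(0)=P_N\bu_0$ and $\bomega^N:=\nabla\times\bu^N$; the right-hand side is quadratic, hence locally Lipschitz, so Picard--Lindel\"of gives a unique local-in-time solution.

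\emph{A priori estimates.} Pairing the Galerkin equation with $\bu^N$ and using $(\bomega^N\times\bu^N)\cdot\bu^N=0$ gives $\tfrac12\tfrac{d}{dt}\|\bu^N\|_{L^2}^2+\|\triangle\bu^N\|_{L^2}^2=\lambda\|\nabla\bu^N\|_{L^2}^2\le\tfrac12\|\triangle\bu^N\|_{L^2}^2+\tfrac{\lambda^2}{2}\|\bu^N\|_{L^2}^2$, whence $\|\bu^N(t)\|_{L^2}^2\le e^{\lambda^2t}\|\bu_0\|_{L^2}^2$ and, after integrating, a uniform bound on $\{\bu^N\}$ in $L^\infty(0,T;L^2)\cap L^2(0,T;H^2)$ (so the Galerkin solutions are global). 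Pairing with $-\triangle\bu^N$ gives $\tfrac12\tfrac{d}{dt}\|\nabla\bu^N\|_{L^2}^2+\|\nabla\triangle\bu^N\|_{L^2}^2=((\nabla\times\bu^N)\times\bu^N,\triangle\bu^N)+\lambda\|\triangle\bu^N\|_{L^2}^2$. Here I would integrate by parts once, write $((\nabla\times\bu^N)\times\bu^N,\triangle\bu^N)=-\sum_j(\partial_j\bomega^N\times\bu^N,\partial_j\bu^N)-\sum_j(\bomega^N\times\partial_j\bu^N,\partial_j\bu^N)$, discard the second sum by \eqref{cross_prod_ortho}, and estimate the first by H\"older together with Gagliardo--Nirenberg interpolation, distributing the three derivatives present among the three factors so that each factor is bounded purely in terms of $\|\bu^N\|_{L^2}$ and $\|\bu^N\|_{\dot H^3}=\|\nabla\triangle\bu^N\|_{L^2}$; in 2D the H\"older constraint forces the total $\dot H^3$-exponent to equal $\tfrac43$, compatible with the admissibility of the individual interpolation exponents since $1<\tfrac43<2$, so that $|((\nabla\times\bu^N)\times\bu^N,\triangle\bu^N)|\le C\|\nabla\triangle\bu^N\|_{L^2}^{4/3}\|\bu^N\|_{L^2}^{5/3}$. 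By Young this is $\le\tfrac14\|\nabla\triangle\bu^N\|_{L^2}^2+C\|\bu^N\|_{L^2}^{5}$, and $\lambda\|\triangle\bu^N\|_{L^2}^2\le\lambda\|\nabla\bu^N\|_{L^2}\|\nabla\triangle\bu^N\|_{L^2}\le\tfrac14\|\nabla\triangle\bu^N\|_{L^2}^2+\lambda^2\|\nabla\bu^N\|_{L^2}^2$; hence $\tfrac{d}{dt}\|\nabla\bu^N\|_{L^2}^2+\|\nabla\triangle\bu^N\|_{L^2}^2\le2\lambda^2\|\nabla\bu^N\|_{L^2}^2+C\|\bu^N\|_{L^2}^5$, whose right-hand side is already globally controlled by the $L^2$ bound. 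Gr\"onwall then yields a uniform bound on $\{\bu^N\}$ in $L^\infty(0,T;H^1)\cap L^2(0,T;H^3)$ when $\bu_0\in H^1$; for merely $\bu_0\in L^2$ one first multiplies the differential inequality by $t$ and invokes the instantaneous smoothing of the equation to reach the same class. Reading $\partial_t\bu^N$ off the equation, these bounds give $\{\partial_t\bu^N\}$ bounded in $L^2(0,T;H^{-1})$.

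\emph{Passage to the limit.} Banach--Alaoglu produces $\bu^N\overset{*}{\rightharpoonup}\bu$ in $L^\infty(0,T;H^1)$, $\bu^N\rightharpoonup\bu$ in $L^2(0,T;H^3)$, $\partial_t\bu^N\rightharpoonup\partial_t\bu$ in $L^2(0,T;H^{-1})$, and Aubin--Lions (with the bounds $\{\bu^N\}\subset L^2(0,T;H^2)$, $\{\partial_t\bu^N\}\subset L^2(0,T;H^{-1})$ and the compact embedding of $H^2$ into $H^1$) gives, along a subsequence, strong convergence in $L^2(0,T;H^1)$; hence $\bu^N\to\bu$ and $\bomega^N\to\bomega$ strongly in $L^2(0,T;L^2)$. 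The linear terms pass to the limit at once; for the nonlinearity I would write $\bomega^N\times\bu^N-\bomega\times\bu=(\bomega^N-\bomega)\times\bu^N+\bomega\times(\bu^N-\bu)$ and bound both summands in $L^1(0,T;L^1)$ by $\|\bomega^N-\bomega\|_{L^2(0,T;L^2)}\|\bu^N\|_{L^2(0,T;L^2)}$ and $\|\bomega\|_{L^2(0,T;L^2)}\|\bu^N-\bu\|_{L^2(0,T;L^2)}$, both $\to0$; removing $P_N$ is the usual duality argument since $P_N\bvphi\to\bvphi$. Thus $\bu$ satisfies \eqref{vKSE_mo} in $L^2(0,T;H^{-1})$, and since $\bu\in L^2(0,T;H^1)$ with $\partial_t\bu\in L^2(0,T;H^{-1})$, the Lions--Magenes lemma gives $\bu\in C([0,T];L^2)$, so $\bu(0)=\bu_0$ is meaningful; hence $\bu$ is a solution in the stated sense.

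\emph{Uniqueness, and the main obstacle.} If $\bu,\bv$ are two solutions, $\bw:=\bu-\bv$ solves $\partial_t\bw+(\nabla\times\bw)\times\bu+(\nabla\times\bv)\times\bw+\lambda\triangle\bw+\triangle^2\bw=\mathbf0$. Pairing with $\bw$ (legitimate since $\bw\in L^2(0,T;H^2)$ and $\partial_t\bw\in L^2(0,T;H^{-1})$) and using $((\nabla\times\bv)\times\bw)\cdot\bw=0$ gives $\tfrac12\tfrac{d}{dt}\|\bw\|_{L^2}^2+\|\triangle\bw\|_{L^2}^2=-((\nabla\times\bw)\times\bu,\bw)+\lambda\|\nabla\bw\|_{L^2}^2$; estimating $((\nabla\times\bw)\times\bu,\bw)\le\|\nabla\bw\|_{L^2}\|\bu\|_{L^4}\|\bw\|_{L^4}$ with the 2D inequalities $\|\bu\|_{L^4}\le C\|\bu\|_{H^1}$ (as in \eqref{GNS_full}), $\|\bw\|_{L^4}^2\le C\|\bw\|_{L^2}\|\bw\|_{H^1}$, and $\|\nabla\bw\|_{L^2}^2\le\|\bw\|_{L^2}\|\triangle\bw\|_{L^2}$, and then applying Young, one bounds the right-hand side by $\tfrac12\|\triangle\bw\|_{L^2}^2+C(1+\|\bu\|_{H^1}^{8/5})\|\bw\|_{L^2}^2$ (again the hyperdiffusion term $\|\triangle\bw\|_{L^2}^2$ is what makes the absorption work); since $\bu\in L^\infty(0,T;H^1)$, Gr\"onwall and $\bw(0)=\mathbf0$ force $\bw\equiv\mathbf0$. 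The crux of the whole proof is the $H^1$ a priori bound: without a maximum principle one must control $((\nabla\times\bu)\times\bu,\triangle\bu)$ using only the globally bounded $\|\bu\|_{L^2}$ and the dissipation $\|\bu\|_{\dot H^3}^2$, with no uncontrolled power of $\|\nabla\bu\|_{L^2}$; it is precisely the one integration by parts plus the pointwise cross-product cancellation, combined with the favourable 2D interpolation budget, that makes this possible, and this is exactly where $\triangle^2\bu$ earns its keep --- the reason \eqref{vKSE} is globally well-posed whereas \eqref{Burgers_twist} required the detour through classical solutions and a maximum principle.
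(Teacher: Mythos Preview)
Your proof is correct and follows the same overall Galerkin scheme as the paper: an $L^2$ estimate via cross-product orthogonality, an $H^1$ estimate, Aubin--Lions compactness, then uniqueness by Gr\"onwall. The main technical difference lies in how you handle the nonlinear term in the $H^1$ estimate. The paper bounds $((\nabla\times\bu^N)\times\bu^N,\triangle\bu^N)$ directly via H\"older $L^2\times L^4\times L^4$ and 2D Ladyzhenskaya, obtaining a Gr\"onwall coefficient proportional to $\|\triangle\bu^N\|_{L^2}^{2/3}$ (merely integrable in time), and then uses a double-integration smoothing trick to absorb $L^2$ initial data. Your extra integration by parts, which exposes and then kills the term $\sum_j(\bomega^N\times\partial_j\bu^N,\partial_j\bu^N)$ via \eqref{cross_prod_ortho}, leaves a trilinear form that interpolates cleanly between $L^2$ and $\dot H^3$ with total $\dot H^3$-exponent $4/3<2$; after Young this yields a differential inequality with \emph{constant} Gr\"onwall coefficient $2\lambda^2$ and a source $C\|\bu^N\|_{L^2}^5$ already controlled by the $L^2$ bound --- a tidier route to the same $L^\infty(0,T;H^1)\cap L^2(0,T;H^3)$ control. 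The remaining differences (you use Aubin--Lions to get strong $L^2(0,T;H^1)$ rather than the paper's strong $L^\infty(0,T;L^2)$; you split the difference nonlinearity as $(\nabla\times\bw)\times\bu+(\nabla\times\bv)\times\bw$ rather than the paper's $(\nabla\times\bu)\times\bw+(\nabla\times\bw)\times\bv$) are cosmetic, since either term carrying no curl of $\bw$ vanishes against $\bw$ by \eqref{cross_prod_ortho}.
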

 \subsection{Well-Posedness for Rotational KSE}\label{sec_vKSE_well_posedness}
Part of the interest in studying the higher-dimensional Kuramoto-Sivashinsky equations is the (superficial) resemblance of the nonlinear term $\bu\cdot\nabla\bu$ to the nonlinear term in \eqref{NSE}.
\noindent
In this section, we prove that system \eqref{vKSE} is globally well-posed.  
For each $N\in\nN$, consider the following Galerkin ODE system based on \eqref{vKSE}
\begin{subequations}
\begin{align}
\label{vKSE_Gal}
\pd{\bu^N}{t}&= -P_N((\nabla\times\bu^N)\times\bu^N) -\lambda\triangle\bu^N - \triangle^2\bu^N,
\\
\bu^N(\bx,0)&=P_N(\bu_0(\bx)),
\end{align}
\end{subequations}
For any fixed $N$, this system represents a finite-dimensional ODE with quadratic right-hand side (technically, it is the coefficients of $\bu^N$ that form the ODE).  Hence, by the Picard-Lindel\"of it has a unique solution $\bu_N(t)\in C^1([-T_N,T_N];C^\infty(\nT^3))$ for some $T_N>0$.  Taking a justified inner-product of both sides of \eqref{vKSE_Gal} with $\bu^N$, and using
\begin{align}\label{nonlinear_term_vanishes}
 (P_N((\nabla\times\bu^N)\times\bu^N),\bu^N) 
 &= ((\nabla\times\bu^N)\times\bu^N,\bu^N) 
 \\&= \notag
 \int_\Omega ((\nabla\times\bu^N)\times\bu^N)\cdot\bu^N\,d\bx
 = 0,
\end{align}
we find that
\begin{align*}
  \frac12\frac{d}{dt}\|\bu^N\|_{L^2}^2 + \|\triangle\bu^N\|_{L^2}^2
  &= -\lambda(\triangle\bu^N,\bu^N)
  \leq
  \lambda\|\triangle\bu^N\|_{L^2}\|\bu^N\|_{L^2}
  \\&\leq
  \frac{1}{2}\|\triangle\bu^N\|_{L^2}^2 + \frac{\lambda^2}{2}\|\bu^N\|_{L^2}^2.
\end{align*}
Thus,
\begin{align}\label{L2_bound}
  \frac{d}{dt}\|\bu^N\|_{L^2}^2 + \|\triangle\bu^N\|_{L^2}^2
  &\leq
   \lambda^2\|\bu^N\|_{L^2}^2.
\end{align}
Hence, $\|\bu^N(t)\|_{L^2}^2 \leq e^{\lambda^2t}\|\bu^N(0)\|_{L^2}^2 \leq e^{\lambda^2t}\|\bu_0\|_{L^2}^2=:K_0(t)$, so that we may take the time of existence and uniqueness of the Galerkin ODE to be $T_N=\infty$.  By integrating \eqref{L2_bound} in time, we see that for any $T>0$, 
\begin{align}\label{L2bound}
 \text{$\set{\bu^N}_{N\in\nN}$ is a bounded sequence in $L^\infty(0,T;L^2)$ and $L^2(0,T;H^2)$.}
\end{align}
Thus, by the Banach-Alaoglu Theorem, there exists a function $\bu$ and a subsequence of $\set{\bu^N}_{N\in\nN}$ (which we relabel as $\set{\bu^N}_{N\in\nN}$) such that
\begin{subequations}
\begin{align}
 \bu^N \overset{*}{\rightharpoonup}\bu &\text{ in } L^\infty(0,T;L^2),
 \\
 \bu^N\rightharpoonup \bu &\text{ in } L^2(0,T;H^2).
\end{align}
\end{subequations}
Next, taking a justified inner-product of both sides with $-\triangle\bu^N$, and integrating by parts, we find
\begin{align}
&\quad\notag
 \frac12\frac{d}{dt}\|\nabla\bu^N\|_{L^2}^2 + \|\triangle\nabla\bu^N\|_{L^2}^2
  \\&= \notag
  \lambda(\triangle\nabla\bu^N,\nabla\bu^N) - ((\nabla\times\bu^N)\times\bu^N,\triangle\bu^N)
  \\&\leq\notag
  \lambda\|\triangle\nabla\bu^N\|_{L^2}\|\nabla\bu^N\|_{L^2} 
  + \|\nabla\bu^N\|_{L^2}\|\bu^N\|_{L^4}\|\triangle\bu^N\|_{L^4}
  \\&\leq\notag
  \lambda\|\triangle\nabla\bu^N\|_{L^2}\|\nabla\bu^N\|_{L^2} 
  + C\|\bu^N\|_{L^2}^{1/2}\|\nabla\bu^N\|_{L^2}^{3/2}\|\triangle\bu^N\|_{L^2}^{1/2}\|\triangle\nabla\bu^N\|_{L^2}^{1/2}
  \\&\leq\notag
  \lambda\|\triangle\nabla\bu^N\|_{L^2}\|\nabla\bu^N\|_{L^2} 
  + K_0(t)\|\nabla\bu^N\|_{L^2}^{3/2}\|\triangle\bu^N\|_{L^2}^{1/2}\|\triangle\nabla\bu^N\|_{L^2}^{1/2}
  \\&\leq\notag
  \frac14\|\triangle\nabla\bu^N\|_{L^2}^2 + \lambda^2\|\nabla\bu^N\|_{L^2}^2
  + K_0\|\nabla\bu^N\|_{L^2}^2\|\triangle\bu^N\|_{L^2}^{2/3}
   + \frac{1}{4}\|\triangle\nabla\bu^N\|_{L^2}^2
  \\&=\notag
  \frac12\|\triangle\nabla\bu^N\|_{L^2}^2 + 
  (\lambda^2+ K_0(t)\|\triangle\bu^N\|_{L^2}^{2/3})\|\nabla\bu^N\|_{L^2}^2.
\end{align}
Thus,
\begin{align}\label{H1_bound_KSE}
 \frac{d}{dt}\|\nabla\bu^N\|_{L^2}^2 + \|\triangle\nabla\bu^N\|_{L^2}^2
  \leq
  2K_1(t)\|\nabla\bu^N\|_{L^2}^2,
\end{align}
where $K_1(t):=(\lambda^2+ K_0(t)\|\triangle\bu^N(t)\|_{L^2}^{2/3})$.  Thus, 
\begin{align}
&\quad
 \frac{d}{dt}\pnt{\|\nabla\bu^N(t)\|_{L^2}^2\exp\pnt{-\int_0^t 2K_1(s)\,ds}}
 \\&\quad\notag+ 
 \|\triangle\nabla\bu^N\|_{L^2}^2\exp\pnt{-\int_0^t 2K_1(s)\,ds}
  \leq
  0.
\end{align}
Next, let $t-1\leq s < t$, and integrate \eqref{H1_bound_KSE} on $[s,t]$ to obtain
\begin{align}
&\quad
 \|\nabla\bu^N(t)\|_{L^2}^2\exp\pnt{-\int_0^t 2K_1(\sigma)\,d\sigma}
 \\&\quad+ \notag
 \int_0^t\|\triangle\nabla\bu^N(\tau)\|_{L^2}^2\exp\pnt{-\int_0^\tau 2K_1(\sigma)\,d\sigma}\,d\tau
  \\&\leq\notag
\|\nabla\bu^N(s)\|_{L^2}^2\exp\pnt{-\int_0^s 2K_1(\sigma)\,d\sigma},
\end{align}
so that
\begin{align}
&
 \|\nabla\bu^N(t)\|_{L^2}^2
 + \int_0^t\|\triangle\nabla\bu^N(\tau)\|_{L^2}^2\exp\pnt{\int_\tau^t 2K_1(\sigma)\,d\sigma}\,d\tau
  \\\leq&\notag
\|\nabla\bu^N(s)\|_{L^2}^2\exp\pnt{\int_s^t 2K_1(\sigma)\,d\sigma}.
\end{align}
Next, we integrate again between $s=t-1$ and $s=t$ to obtain
\begin{align}\label{H1_bound_Gronwall}
&
 \|\nabla\bu^N(t)\|_{L^2}^2
 + \int_0^t\|\triangle\nabla\bu^N(\tau)\|_{L^2}^2\exp\pnt{\int_\tau^t 2K_1(\sigma)\,d\sigma}\,d\tau
  \\\leq&\notag
\int_{t-1}^t\|\nabla\bu^N(s)\|_{L^2}^2\exp\pnt{\int_s^t 2K_1(\sigma)\,d\sigma}\,ds.
\end{align}
Thus, due to the previous bounds on $\bu^N$, we obtain that, for any $T>0$,
\begin{subequations}
\begin{align}\label{H1bound}
 \bu^N \overset{*}{\rightharpoonup}\bu &\text{ in } L^\infty(0,T;H^1),
 \\
 \bu^N\rightharpoonup \bu &\text{ in } L^2(0,T;H^3).
\end{align}  
\end{subequations}
Next, we bound the time derivative.  For $\bvphi\in H^{1}$, we estimate 
\begin{align*}
 \ip{\pd{\bu^N}{t}}{\bvphi}
 &= 
 -\ip{(\nabla\times\bu^N)\times\bu^N}{P_N\bvphi} -\ip{\triangle\bu^N}{\bvphi} - \ip{\triangle^2\bu^N}{\bvphi}
 \\&\leq
  C\|\bu^N\|_{H^1}^{3/2}\|\bu^N\|_{L^2}^{1/2}\|P_N\bvphi\|_{H^1}
  +\lambda\|\bu^N\|_{H^1}\|\bvphi\|_{H^1} 
  \\&\quad\notag
  +\|\bu^N\|_{H^3}\|\bvphi\|_{H^1}.
\end{align*}
Hence,
\begin{align*}
 \left\|\pd{\bu^N}{t}\right\|_{H^{-1}}
 \leq   
 C\|\bu^N\|_{H^1}^{3/2}\|\bu^N\|_{L^2}^{1/2}
  +\lambda\|\bu^N\|_{H^1}
  +\|\bu^N\|_{H^3}.
\end{align*}
Thus, due to \eqref{H1_bound_Gronwall}, 
\begin{align}\label{dudt_bound}
\text{$\set{\pd{\bu^N}{t}}_{N\in\nN}$ is a bounded sequence in $L^2(0,T;H^{-1})$.}
\end{align}
Therefore, by the Aubin-Lions Lemma, due to \eqref{H1_bound_Gronwall} and \eqref{dudt_bound}, we obtain 
\begin{align}\label{strong_conv}
 \bu^N\rightarrow \bu &\text{ strongly in } L^\infty(0,T;L^2),
\end{align}
and moreover, $\bu \in C([0,T];L^2)$.  Thus, taking an inner product of \eqref{vKSE_Gal} with $\bvphi\in H^1$, integrating by parts in space, and integrating on $[0,t]$ for $t\in(0,T)$, we find
\begin{align}
 \pair{\bu^N(t)}{\bvphi}
 &=
  \pair{\bu^N(0)}{\bvphi}
 -\int_0^t\ip{(\nabla\times\bu^N(s))\times\bu^N(s)}{P_N\bvphi}\,ds
 \\&\notag\quad
 +\lambda\int_0^t\pair{\nabla\bu^N(s)}{\nabla\bvphi} \,ds
 +\int_0^t\pair{\triangle\nabla\bu^N(s)}{\nabla\bvphi}\,ds.
\end{align}
Passing to the limit as $N\rightarrow\infty$, using \eqref{H1bound} and \eqref{strong_conv}, and arguing as in the proof of Theorem \ref{thm_strong_local} for the convergence of the nonlinear term, we obtain for \textit{a.e.} $t\in[0,T]$
\begin{align}\label{kse_twist_weak_form}
 \pair{\bu(t)}{\bvphi}
 &=
  \pair{\bu_0}{\bvphi}
 -\int_0^t\ip{(\nabla\times\bu(s))\times\bu(s)}{\bvphi}\,ds
 \\&\notag\quad
 +\lambda\int_0^t\pair{\nabla\bu(s)}{\nabla\bvphi} \,ds
 +\int_0^t\pair{\triangle\nabla\bu(s)}{\nabla\bvphi}\,ds.
\end{align}
It is straight-forward to see that the above bounds imply that the integrands are in $L^1((0,T))$; hence $\lim_{t\rightarrow0^+}\pair{\bu(t)}{\bvphi}=\pair{\bu_0}{\bvphi}$, so that the initial data is satisfied in the sense of weak continuity in time.  Since $\bu \in C([0,T];L^2)$, it follows that $\bu(0)=\bu_0$ in the sense of $C([0,T];L^2)$.
To show that solutions are unique, suppose that $\bu,\bv$ are solutions to \eqref{vKSE}, and let $\bw =  \bu-\bv$.  Then
\begin{align*}
 \bw_t + (\nabla\times\bu)\times\bw + (\nabla\times\bw)\times\bv + \lambda\triangle\bw + \triangle^2\bw &= \mathbf{0}.
\end{align*}
Taking the action of both sides on $\bw\in L^\infty([0,T];H^1)\cap L^2(0,T;H^3)$, and using \eqref{cross_prod_ortho}, we obtain
\begin{align*}
  \frac12\frac{d}{dt}\|\bw\|_{L^2}^2 + \|\triangle\bw\|_{L^2}^2 
  &=  
  -\lambda(\triangle\bw,\bw) - ((\nabla\times\bw)\times\bv,\bw)
  \\&\leq
  \lambda\|\triangle\bw\|_{L^2}\|\bw\|_{L^2}
  + \|\nabla\times\bw\|_{L^6}\|\bv\|_{L^3}\|\bw\|_{L^2}
  \\&\leq
  \lambda\|\triangle\bw\|_{L^2}\|\bw\|_{L^2}
  + C\|\triangle\bw\|_{L^2}\|\bv\|_{L^3}\|\bw\|_{L^2}
    \\&\leq
  \tfrac14\|\triangle\bw\|_{L^2}^2 + \lambda^2\|\bw\|_{L^2}^2
  + C\|\bv\|_{L^3}^{2}\|\bw\|_{L^2}^{2} + \tfrac14\|\triangle\bw\|_{L^2}^2.
\end{align*}
due to the Sobolev Embedding Theorem and elliptic regularity.  Thus,
\begin{align*}
 \frac{d}{dt}\|\bw\|_{L^2}^2 + \|\triangle\bw\|_{L^2}^2
 \leq
   C(\lambda^2 + \|\bv\|_{L^3}^{2})\|\bw\|_{L^2}^{2}.
\end{align*}
Since $\|\bv\|_{L^3}\leq C\|\bv\|_{H^{n/6}}<\infty$, Gr\"onwall's inequality implies that
\begin{align*}
 \|\bw(t)\|_{L^2}^2 + \int_0^t\|\triangle\bw(s)\|_{L^2}^2e^{C\int_\tau^t(1 + \|\bv(s)\|_{L^3}^{2})\,ds}\,d\tau
 \leq
 \|\bw(0)\|_{L^2}^2.
\end{align*}
Thus, solutions are unique, and moreover, depend continuously (in the $C([0,T];L^2)\cap L^2(0,T;H^2)$ sense) on the initial data.
\section{A rotational numerical scheme}\label{subsec_rot_num}
Consider equation the inviscid equation \eqref{Burgers_twist_inviscid}.
We comment on the well-posedness of this equation in Section \ref{sec_well_posedness_inviscid}. 
Smooth solutions to this equation (if they exist) do not change length, and in this section, we exploit this fact to obtain an implicit representation, as well as numerical scheme, at least in two dimensions.  The three-dimensional case is less straight-forward.  For instance, if we denote 
\begin{align}
W:=
    \begin{pmatrix}
    0&-\omega_3&\omega_2\\
    \omega_3&0&-\omega_1\\
    -\omega_2&\omega_1&0
    \end{pmatrix},
\end{align}
then \eqref{Burgers_twist_inviscid} can be written in the form
\begin{align}
    \bu_t + W\bu = 0,
\end{align}
from which one might try to use integrating factor methods.  Indeed, denoting the matrix $\Theta(t) := \int_0^tW(s)\,ds$ and the scalar $\xi(t) := \int_0^t|\bomega(t)|\,ds$ where $|\bomega|:=\sqrt{\omega_1^2+\omega_2^2+\omega_3^2}$.  Indeed, since $\Theta$ is skew-symmetric, its matrix exponential can be computed using the Euler-Rodrigues rotation formula; namely,
\begin{align}
    e^{\Theta(t)} &= 
    I + \sin(\xi)\frac{\Theta}{\xi} + (1-\cos(\xi))\pnt{\frac{\Theta}{\xi}}^2.
\end{align}
However, the following equality does \textit{not} hold in 3D
\[
\frac{d}{dt}e^{\Theta(t)} = We^{\Theta(t)} \qquad \text{[FALSE in 3D]}
\]
due to the fact that $W'(t)W(t) \neq W(t)W'(t)$, and one must instead use the Magnus expansion (see, e.g., \cite{Magnus_1954,Blanes_Casas_Oteo_Ros_2009}) to find the correct fundamental matrix of the system, an approach that seems computationally expensive.
\subsection{The two-dimensional case}\label{subsec_2D_algorithm}  Recall the 2D system \eqref{Burgers_twist_inviscid_2D}.
In this case, $W=\left(\begin{smallmatrix}0&-\omega\\\omega&0
  \end{smallmatrix}\right)$.  Hence, $W'(t)W(t)= W(t)W'(t)$, unlike in the 3D case.  Let $\theta(t):=\int_0^t\omega(s)\,ds$.  For notational clarity, we denote $\dot{\theta} = \pd{\theta}{t}=\omega$, and similarly for other time derivatives. Note that
\begin{align}
&\quad\label{2D_int_factor}
    \pd{}{t}
    \begin{pmatrix}
        u_1\cos\theta - u_2\sin\theta\\
        u_1\sin\theta + u_2\cos\theta
    \end{pmatrix}
    =
    \begin{pmatrix}
        \dot{u}_1\cos\theta -u_1\dot{\theta}\sin\theta
        - \dot{u}_2\sin\theta - u_2\dot{\theta}\cos\theta\\
        \dot{u}_1\sin\theta + u_1\dot{\theta}\cos\theta  
        + \dot{u}_2\cos\theta - u_2\dot{\theta}\sin\theta
    \end{pmatrix}
    \\&=\notag
    \begin{pmatrix}
        \cos\theta &- \sin\theta\\
        \sin\theta &\phantom{+} \cos\theta 
    \end{pmatrix}
    \pd{\bu}{t}
    +
    \dot{\theta}
    \begin{pmatrix}
        \cos\theta &- \sin\theta\\
        \sin\theta &\phantom{+} \cos\theta 
    \end{pmatrix}
    \bu^\perp
    \\&=\notag
    \begin{pmatrix}
        \cos\theta &- \sin\theta\\
        \sin\theta &\phantom{+} \cos\theta 
    \end{pmatrix}
    \pnt{
    \pd{\bu}{t}
    +
    \omega
    \bu^\perp
    }
    =\mathbf{0}.
\end{align}
Hence, integrating the above equation in time on the interval $[0,t]$, we obtain
\begin{align*}
&\quad
    \begin{pmatrix}
        u_1(t)\cos\theta(t) - u_2(t)\sin\theta(t)\\
        u_1(t)\sin\theta(t) + u_2(t)\cos\theta(t)
    \end{pmatrix}
    =
    \begin{pmatrix}
        u_1(0)\cos\theta(0) - u_2(t)\sin\theta(0)\\
        u_1(0)\sin\theta(0) + u_2(t)\cos\theta(0)
    \end{pmatrix}.
\end{align*}
Noting that $\theta(0)=0$, the above equation can be written as
\begin{align*}
&\quad
    \begin{pmatrix}
        \cos\theta(t) &- \sin\theta(t)\\
        \sin\theta(t) &\phantom{+} \cos\theta(t)
    \end{pmatrix}
    \bu(t)
    =\bu_0.
\end{align*}
Hence, inverting the above matrix, we obtain the following implicit solution:
\begin{align}\label{inviscid_implicit_2D}
\bu(t) = \begin{pmatrix}
    \phantom{+}\cos\theta(t) & \sin\theta(t)\\
       -\sin\theta(t) & \cos\theta(t)
    \end{pmatrix}
    \bu_0,
    \qquad
    \theta(t) := \int_0^t\nabla^\perp\cdot\bu(s)\,ds.
\end{align}
Thus, we see that the $\bu$ evolves by pure rotation, but the angle of rotation depends on the time history of the vorticity.  Returning to \eqref{2D_int_factor} and integrating on a time interval $[t_n,t_{n+1}]$ yields
\begin{align*}
&\quad
    \begin{pmatrix}
        \cos\theta(t_{n+1}) &- \sin\theta(t_{n+1})\\
        \sin\theta(t_{n+1}) &\phantom{+} \cos\theta(t_{n+1})
    \end{pmatrix}
    \bu(t_{n+1})
    =
    \begin{pmatrix}
        \cos\theta(t_{n}) &- \sin\theta(t_{n})\\
        \sin\theta(t_{n}) &\phantom{+} \cos\theta(t_{n})
    \end{pmatrix}
    \bu(t_{n}).
\end{align*}
Using standard trigonometric identities, this implies
\begin{align}
    \bu(t_{n+1})
    &= \notag
    \begin{pmatrix}
        \phantom{+}\cos\theta(t_{n+1}) & \sin\theta(t_{n+1})\\
        -\sin\theta(t_{n+1}) & \cos\theta(t_{n+1})
    \end{pmatrix}
    \begin{pmatrix}
        \cos\theta(t_{n}) &- \sin\theta(t_{n})\\
        \sin\theta(t_{n}) &\phantom{+} \cos\theta(t_{n})
    \end{pmatrix}
    \bu(t_{n})
    \\&= \notag
    \begin{pmatrix}
        \cos(\theta(t_{n+1})-\theta(t_{n}))
        & \sin(\theta(t_{n+1})-\theta(t_{n}))\\
        -\sin(\theta(t_{n+1})-\theta(t_{n}))
        &\cos(\theta(t_{n+1})-\theta(t_{n}))
    \end{pmatrix}
    \bu(t_{n})
    \\&= \notag
    \begin{pmatrix}
        \cos(\int_{t_n}^{t_{n+1}}\omega(s)\,ds)
        & \sin(\int_{t_n}^{t_{n+1}}\omega(s)\,ds)\\
        -\sin(\int_{t_n}^{t_{n+1}}\omega(s)\,ds)
        &\cos(\int_{t_n}^{t_{n+1}}\omega(s)\,ds)
    \end{pmatrix}
    \bu(t_{n}).
    \qquad
\end{align}
Note that at this stage, the above formula is exact.  Approximating the integral by $\int_{t_n}^{t_{n+1}}\omega(s)\,ds\approx\Delta t\omega(t_n)$, $\omega^n\approx \omega(t_n)$, $\bu^n\approx\bu(t_n)$, we obtain the following iteration scheme:
\begin{align}\label{2D_euler_rot_update}
    \bu^{n+1} = \begin{pmatrix}
        \cos(\Delta t\omega^n)
        & \sin(\Delta t\omega^n)\\
        -\sin(\Delta t\omega^n)
        &\cos(\Delta t\omega^n)
    \end{pmatrix}
    \bu^n.
\end{align}
  In the forced, viscous case, after approximating the viscous term explicitly\footnote{It is perhaps an unusual quirk of this algorithm that the viscous term is handled explicitly, while the nonlinear term is handled implicitly, a reversal of standard algorithms for, e.g., Navier-Stokes.},  similar calculations lead to the algorithm
\begin{align}\label{2D_euler_rot_update_visc}
    \bu^{n+1} = \begin{pmatrix}
        \cos(\Delta t\omega^n)
        & \sin(\Delta t\omega^n)\\
        -\sin(\Delta t\omega^n)
        &\cos(\Delta t\omega^n)
    \end{pmatrix}
    [\bu^n+(\Delta t)(\triangle\bu^n + \bbf(t_n))].
\end{align}
Of course, using higher-order quadrature rules to approximate the integral, one may obtain a wide variety of schemes.
\section{Simulations}\label{sec_simulations}
In this section, we present the results of a simulation of \eqref{Burgers_twist}.  The purpose of this section is not to give a detailed numerical study, nor to compare with related equations, but merely to give some indication of the complexity of the large-time dynamics, and to demonstrate some possible dynamical behavior of the rotational Burgers equation \eqref{Burgers_twist}.  We do not investigate the inviscid case $(\nu=0)$ computationally since, although the scheme \eqref{2D_euler_rot_update} exhibits excellent energy conservation (before blow-up), as may be expected from Theorem \ref{thm_inviscid_blow_up}, simulations blow up too quickly to allow for meaningful exploration of the large-time dynamics.  We also do not simulate the rotational Kuramoto-Sivashisnky equations \eqref{vKSE}, as handling the linear terms explicitly as in \eqref{2D_euler_rot_update_visc} would lead to the prohibitively restrictive CFL constraint $\Delta t\lesssim (\Delta x)^4$.
Simulations of \eqref{Burgers_twist} are shown in Figure \ref{figs:time1000pics}. 
Figure \ref{figs:time1000pics} (A) shows that the energy spectrum  $ E_k:=\sqrt{\sum_{k\leq|\vec{\ell}|<k+1}|\widehat{u}^{\vec{\ell}}|^2}$ is well-resolved, which was the case for the entire time evolution of the system. We make no claims about the shape of $E_k$, save that in the range $10\lesssim k\lesssim 30$,  it appears to be roughly of the form $k^{-p}$ for some $p\approx2$.  Figures \ref{figs:time1000pics} (B), (C), and (D) illustrate that the dynamics of the flow are non-trivial and contain a wide range of length scales, that seem to be organized into structures.  The video \cite{Larios_2025_Burgers_Twist_viscous_video} of the simulation indicates a chaotic or perhaps quasi-periodic evolution.  This supported by norms of the solution, pictured in Figure \ref{figs:EEplanes}, where we see the fast evolution of the system to what appears to be an absorbing set, at which point the dynamics appear to be chaotic.

\begin{figure}[htp!]
\centering
\begin{subfigure}[t]{.4\textwidth}
    \includegraphics[width=\linewidth,trim=0 14 0 0,clip]{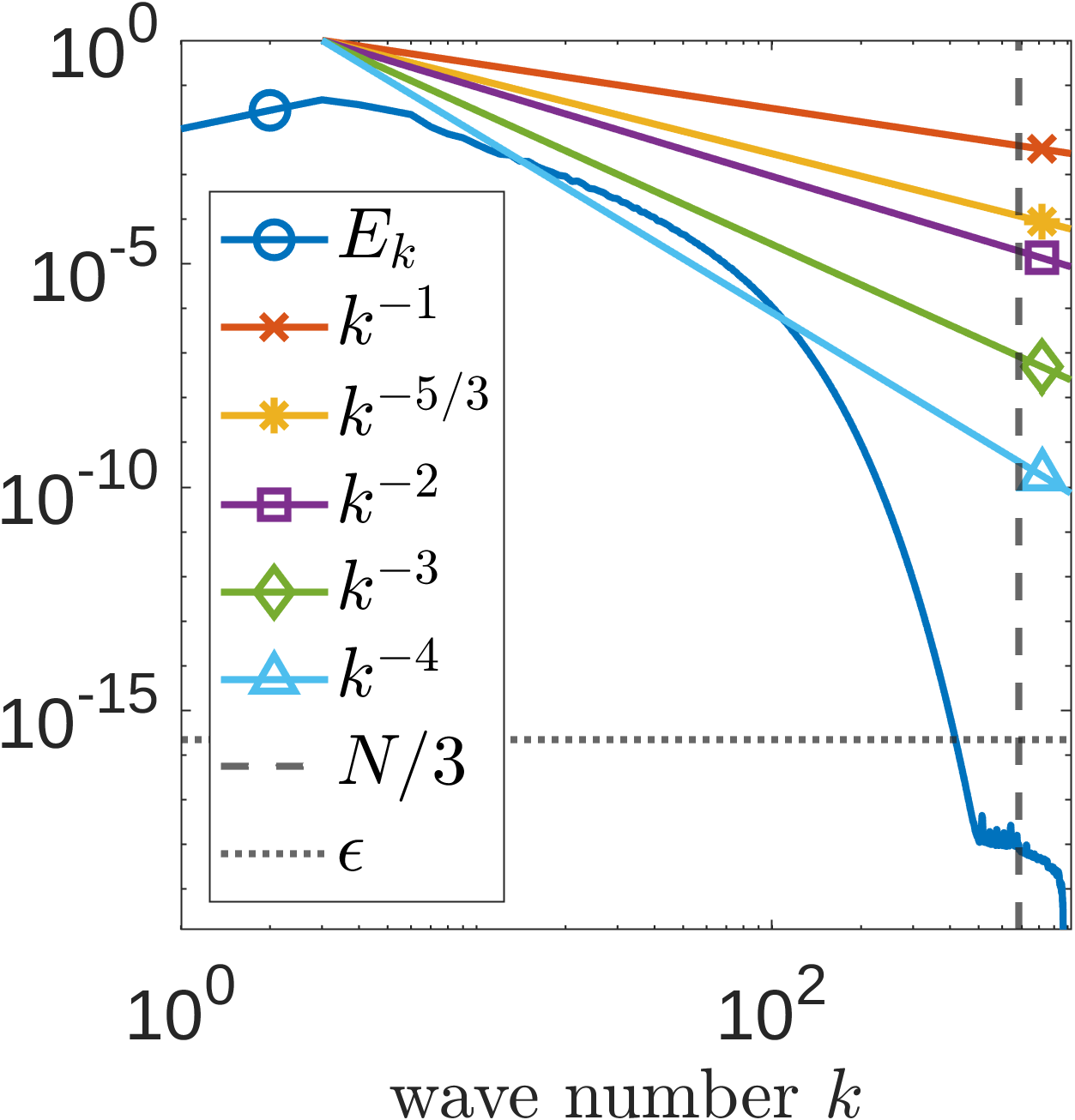}
    \caption{\footnotesize Energy spectrum $E_k$ vs. $k$}
\end{subfigure}
\hfill
\begin{subfigure}[t]{.49\textwidth}
    \includegraphics[width=\linewidth,trim=10mm 10mm 0 8mm,clip]{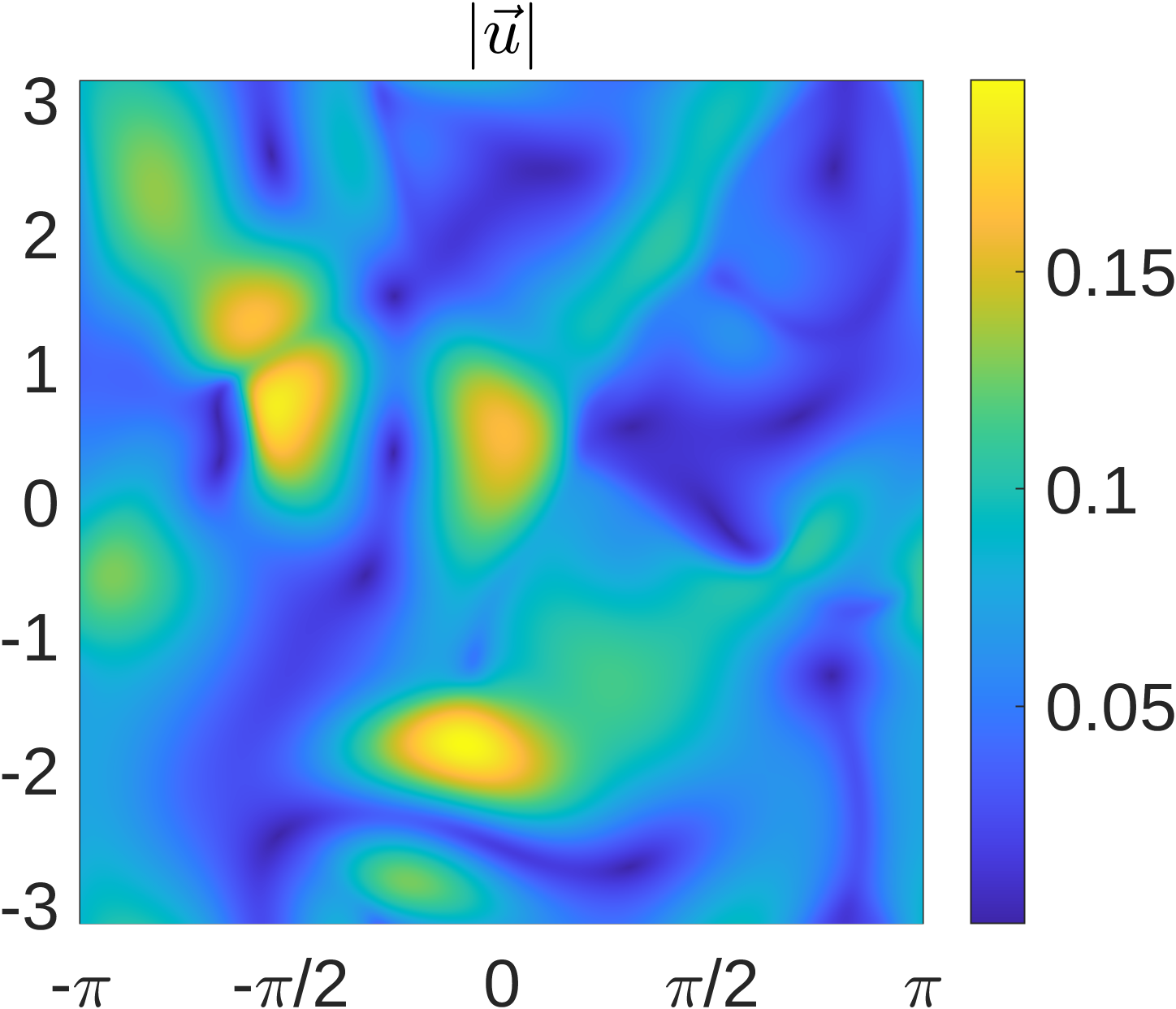}
    \caption{\footnotesize Magnitude $|\bu|$\phantom{mmm}}
\end{subfigure}
\begin{subfigure}[t]{.49\textwidth}
    \includegraphics[width=\linewidth,trim=13mm 10mm 0 7mm,clip]{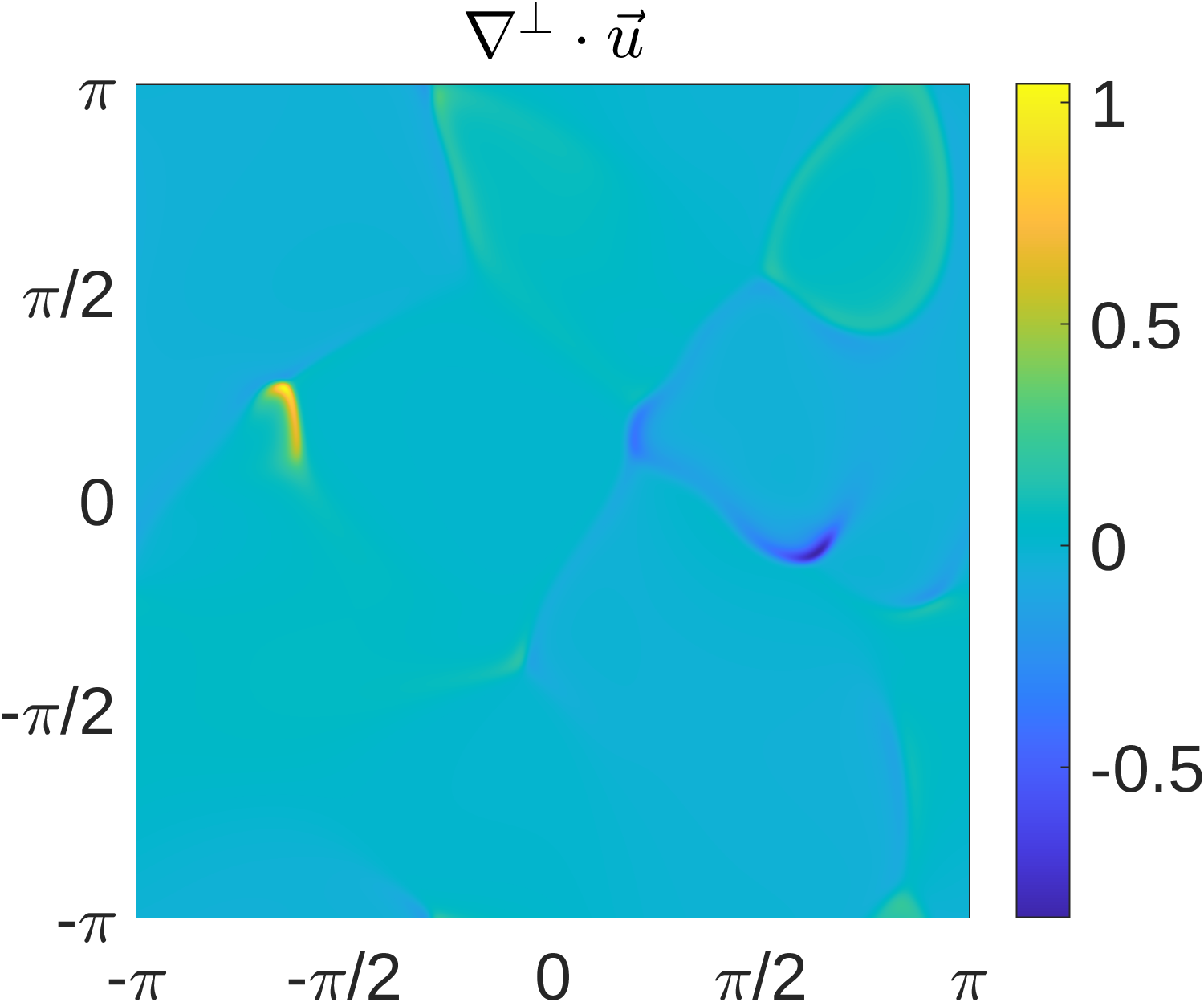}
    \caption{\footnotesize Curl $\nabla^\perp\cdot\bu$\phantom{mmm}}
\end{subfigure}
\hfill
\begin{subfigure}[t]{.49\textwidth}
    \includegraphics[width=\linewidth,trim=12mm 12mm 0 7mm,clip]{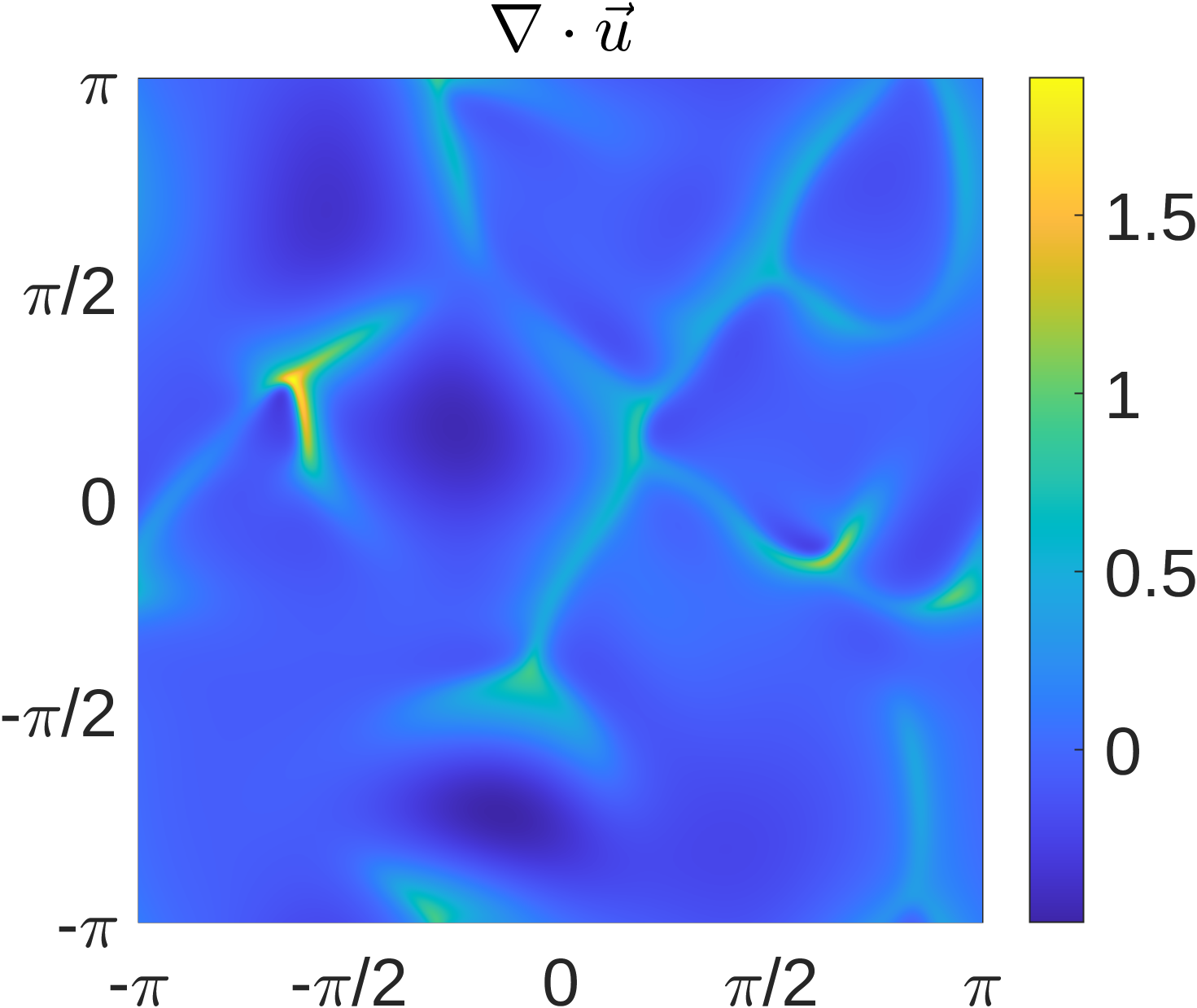}
    \caption{\footnotesize Divergence $\nabla\cdot\bu$\phantom{mm,,}}
\end{subfigure}
\caption{\label{figs:time1000pics}\footnotesize 
Snapshots of various quantities at time $T=1000$ for \eqref{Burgers_twist} (2D).  
Resolution: $N^2=2048^2$.  
(a) Energy spectrum $E_k$.  
Spectrum is resolved to machine precision ($\epsilon=2.22\times10^{-16}$) before dealiasing cutoff ($N/3$) with inertial range roughly $E_k\sim k^{-2}$.  
(b) Magnitude of the velocity.  
(c) 2D curl, showing negative and positive fine structures.  
(d) Divergence, with fine-scale positive structures, and coarse-scale negative structures.  
Unlabeled axes: (a) $E_k$ vs.~wave number $k$; (b), (c), and (d): standard $xy$-axes, $-\pi\leq x, y <\pi$.
}
\end{figure}

To generate Figures \ref{figs:time1000pics} and \ref{figs:EEplanes}, the algorithm discussed in Section \eqref{subsec_2D_algorithm} was coded and run on MATLAB version 2024a, using pseudo-spectral methods; i.e., derivatives computed in Fourier space, and products computed using collocation in physical space respecting the 2/3's dealiasing rule. The viscous term was computed explicitly from the previous time-step, using a viscosity of $\nu=0.001$. The spatial grid resolution was $N^2=2048^2$ ($\Delta x=\Delta y =2\pi/N$) on the periodic domain $[-\pi,\pi)^2$.   
The time step was $\Delta t\approx0.001534\approx 0.163(\Delta x)^2/\nu$ (i.e., respecting the viscous CFL constraint).  
The initial data was chosen as $\bu_0\equiv\mathbf{0}$.  
The force $\bbf$ was chosen to have normally-distributed real and imaginary parts (seeded initially with \texttt{rng(0)}) for Fourier coefficients $\widehat{\bbf}_{\bk}$, supported on the annulus $0.5\leq|\bk|\leq 2.5$ and scaled to have a Grashof number of $G=20$, where $G=\|\bbf\|_{L^2}/(\lambda_1\nu^2)$.  In fact, $\bbf$ is (the higher-resolution version of) the vector field $\vec{v}$ pictured in Figure \ref{figs:fields} (B).  A video of the simulation is available at \cite{Larios_2025_Burgers_Twist_viscous_video}, which includes the data for $\bbf$ in the description (data also available upon request).

\begin{figure}[htp!]
\centering
\begin{subfigure}[t]{.49\textwidth}
    \includegraphics[width=\linewidth,trim=0 0 0 0,clip]{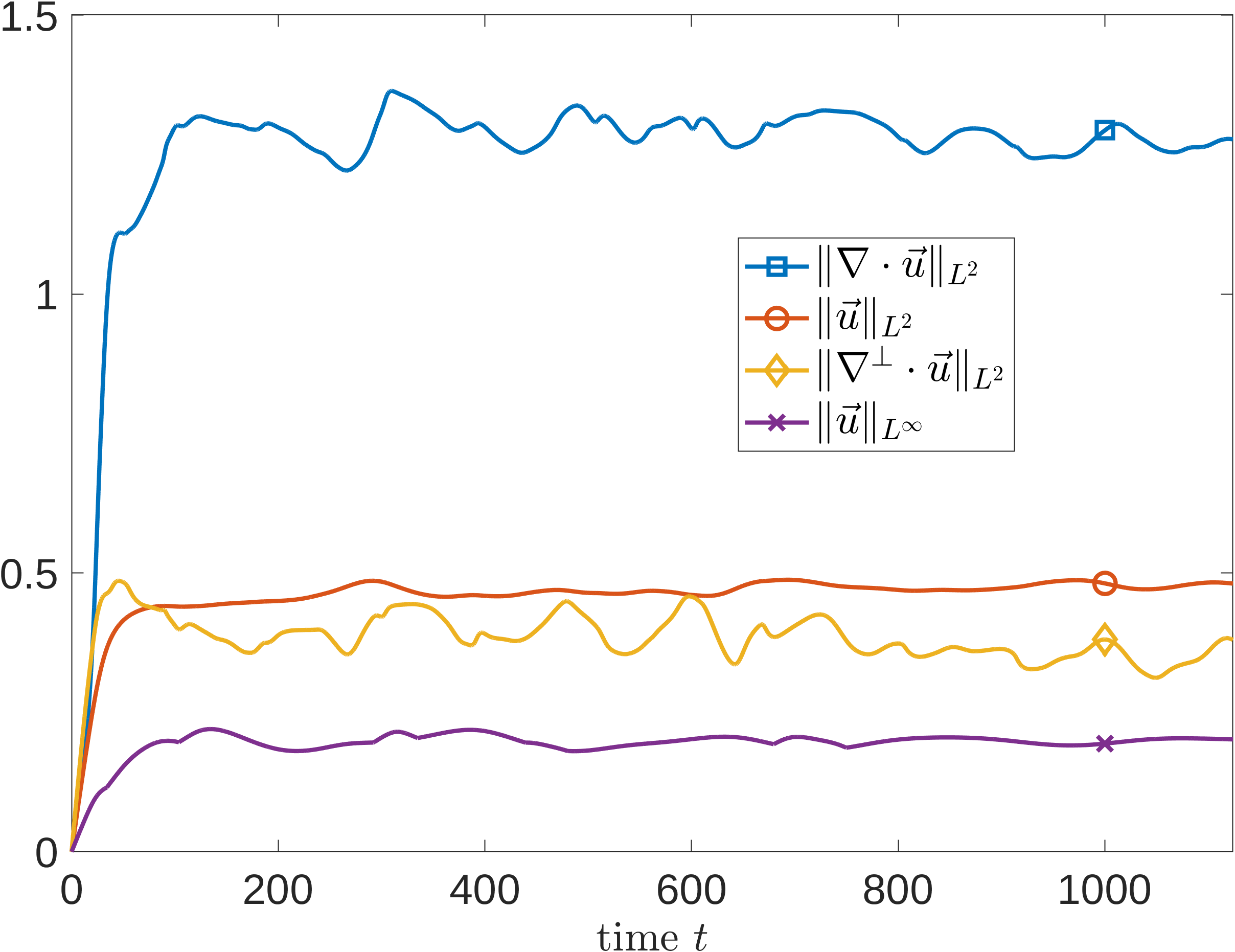}
    \caption{\footnotesize Norms of various quantities vs.~time}
\end{subfigure}
\hfill
\begin{subfigure}[t]{.49\textwidth}
    \includegraphics[width=\linewidth,trim=0 0 0 0,clip]{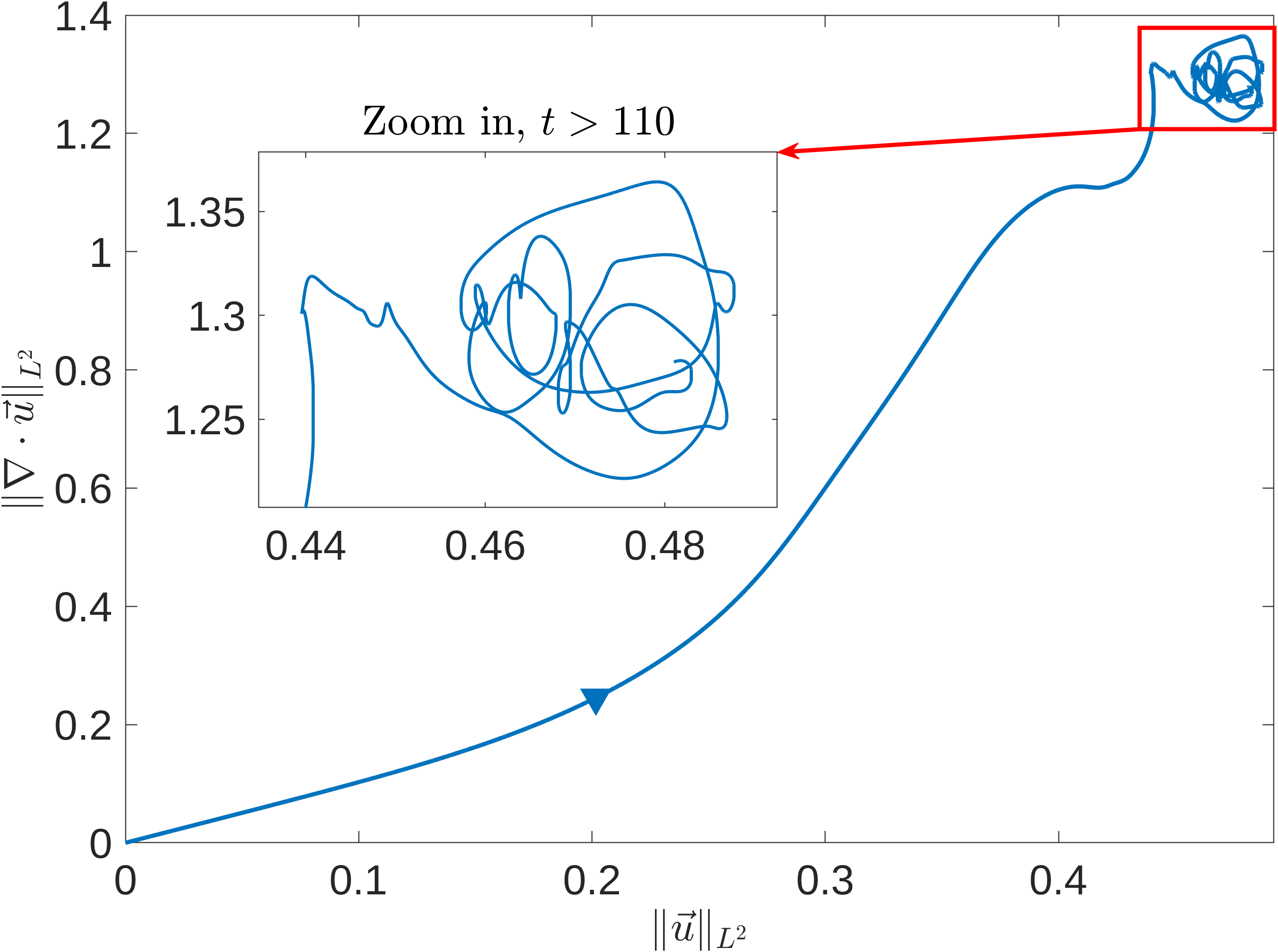}
    \caption{\footnotesize $\|\nabla\cdot \bu(t)\|_{L^2}$ vs.~$\|\bu(t)\|_{L^2}$}
\end{subfigure}
\begin{subfigure}[t]{.49\textwidth}
    \includegraphics[width=\linewidth,trim=0 0 0 0,clip]{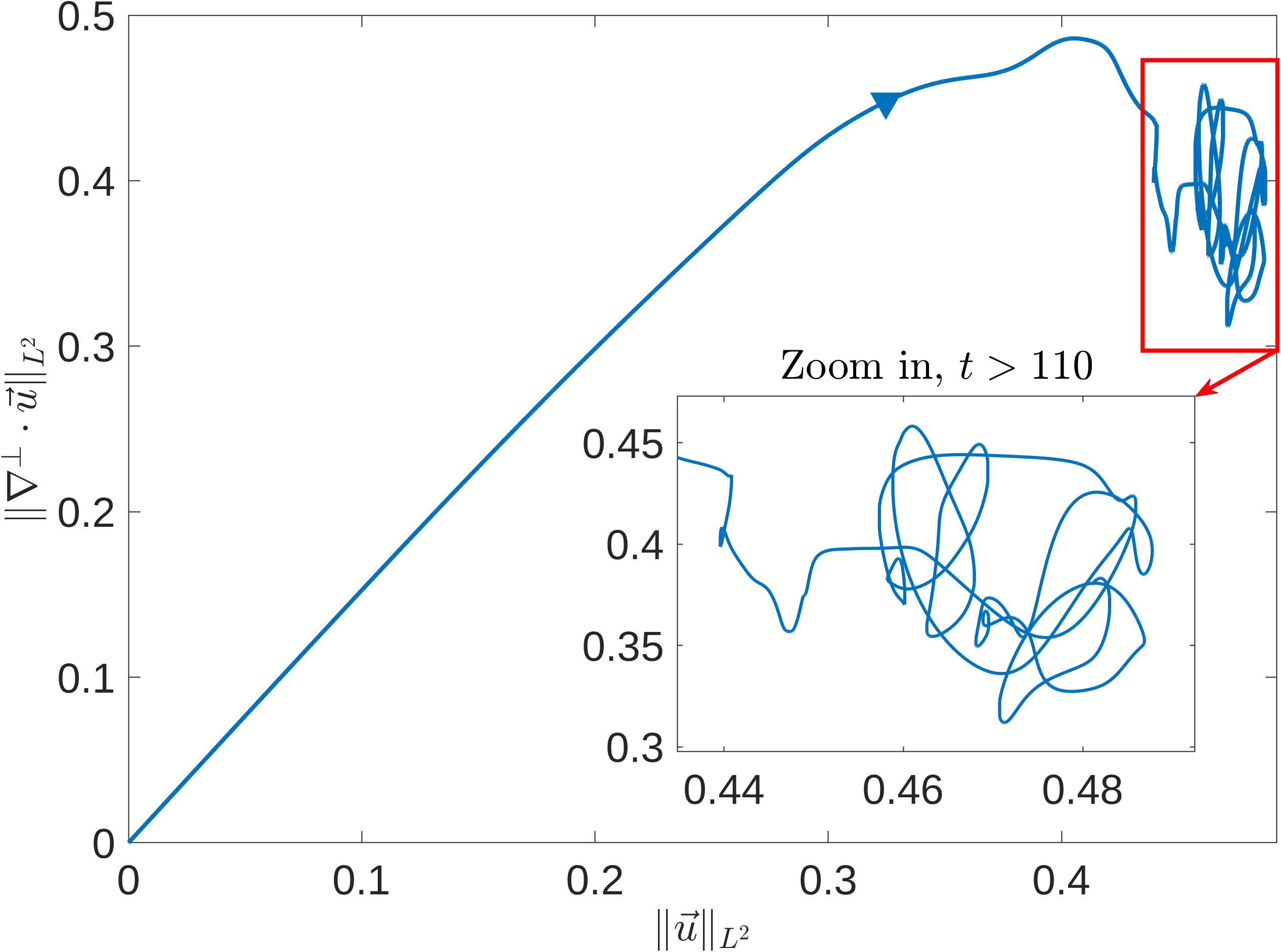}
    \caption{\footnotesize $\|\nabla^\perp\cdot \bu(t)\|_{L^2}$ vs.~$\|\bu(t)\|_{L^2}$}
\end{subfigure}
\hfill
\begin{subfigure}[t]{.49\textwidth}
    \includegraphics[width=\linewidth,trim=0 0 0 0,clip]{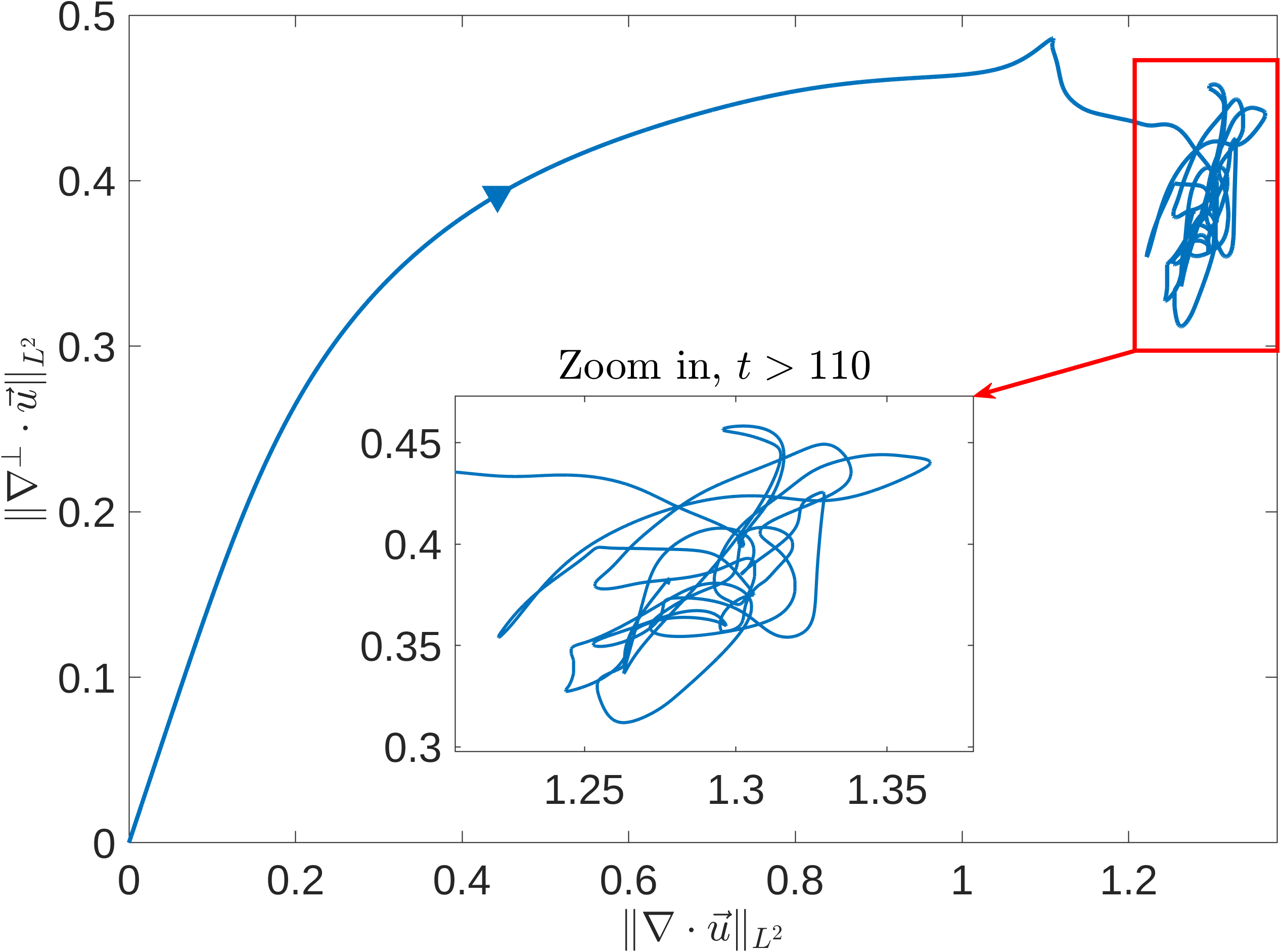}
    \caption{\footnotesize $\|\nabla^\perp\cdot \bu(t)\|_{L^2}$ vs.~$\|\nabla\cdot \bu(t)\|_{L^2}$}
\end{subfigure}
\caption{\label{figs:EEplanes}\footnotesize 
Large-time dynamics as illustrated by various norms of the solution to \eqref{Burgers_twist}.
}
\end{figure}

\FloatBarrier

\section{Additional Considerations}\label{sec_additional_considerations}
As mentioned in the introduction, equation \eqref{Burgers_twist} is distinct even from the usual one-dimensional form of the Burgers equation.  Therefore, we discuss some of the properties of this equation and related related equations.
\subsection{A possible scheme for the 2D Euler equations}
It is worth considering how one might modify the numerical scheme in Section \ref{subsec_2D_algorithm} into a scheme for solving the 2D incompressible Euler equations  (i.e., \eqref{NSE} with $\nu=0$).
For a solution $\bu(t)$ to 2D Euler, let us denote
\[
R(t) := 
    \begin{pmatrix}
        \cos\theta(t) &- \sin\theta(t)\\
        \sin\theta(t) &\phantom{+} \cos\theta(t)
    \end{pmatrix},
    \qquad
    \theta(t) := \int_0^t\nabla^\perp\cdot\bu(s)\,ds.
    \qquad
\]
The calculations in Section \ref{subsec_2D_algorithm} show that the 2D incompressible Euler equations can be formally written in the form
\begin{align}\label{rotational_Euler}
\pd{}{t}(R(t)\bu(t)) 
&= 
R(t)(\bu_t + \omega\bu^\perp)
=
-R(t)\nabla p.
\end{align}
Unfortunately, $R(t)\nabla p$ need not be a gradient, since in general the rotation matrix is spatially inhomogeneous; i.e., it typically rotates by a different angle at each point in space, so applying the Leray projection does not eliminate the pressure term.  Indeed, unlike the algorithm in \eqref{2D_euler_rot_update}, my preliminary calculations with schemes directly based on \eqref{rotational_Euler} (plus variations on solenoidal projection) exhibit bad energy conservation properties.  However, there is some hope that \eqref{rotational_Euler} could be used as part of, e.g., a predictor-corrector scheme, which may be investigated in a future work.
\subsection{Helicity}
We note that the inviscid versions of \eqref{NSE_vor}, \eqref{Burgers}, \eqref{Burgers_twist}, and \eqref{Burgers_twist_curl_curl} all conserve the helicity $(\bu,\bomega)$.  In particular, setting $\nu=0$ in any of these equations, we formally compute
\begin{align}
 \frac{d}{dt}(\bu,\bomega) 
 &= 
 (\bu_t,\bomega) + (\bu,\bomega_t) 
 = 
  -(\bomega\times\bu,\bomega) - (\bu,\nabla\times(\bomega\times\bu)) 
   \\&= \notag
  -(\bomega\times\bu,\bomega) + (\nabla\times \bu,\bomega\times\bu_t) 
  =0,
\end{align}
where the gradient term ($\nabla\pi$ in the Navier-Stokes case, $\frac{1}{2}\nabla|\bu|^2$ in the Burgers case) vanishes in the second equality due to the vector identity $\nabla\cdot(\nabla\times\mathbf{A})=0$ and integration by parts (using periodic boundary conditions), and the third equality follows by integration by parts, and \eqref{cross_prod_ortho}.
\subsection{Dynamics of the Divergence}
It might be that rotational Burgers is closer to Navier-Stokes in terms of energy transfer between scales.  Note that both the Burgers equation \eqref{Burgers} and rotational Burgers \eqref{Burgers_twist} have the same vorticity equation (taking $\nu=0$, $\bbf\equiv\mathbf{0}$, and denoting $D:=\nabla\cdot\bu$),
\begin{align}
     \partial_t \bomega + (\bu\cdot\nabla)\bomega
 &= (\bomega\cdot\nabla)\bu - \bomega D,
\end{align}
but different divergence equations.  For Burgers \eqref{Burgers},
\begin{align}
  \partial_t D + (\bu\cdot\nabla)D
  &=
  |\bomega|^2- |\nabla\bu|^2,\phantom{+\nabla\cdot((\nabla\bu)^T\bu)}
\end{align}
so, loosely speaking, the divergence is governed by a competition between ``rotation strength'' $|\bomega|^2$ and ``strain strength'' $|\nabla\bu|^2$
but for rotational Burgers \eqref{Burgers_twist},
\begin{align}\label{Burgers_twist_div_equation}
  \partial_t D + (\bu\cdot\nabla)D
  &=
  |\bomega|^2- |\nabla\bu|^2+\tfrac12\triangle|\bu|^2,\phantom{,} 
\end{align}
so that the divergence has additional an additional source term related to the curvature of the local kinetic energy, potentially creating stronger fluctuations in the divergence.  Note that we can also write \eqref{Burgers_twist_div_equation} as $\nabla\cdot \mathbf{L} = -\partial_tD$, where $\mathbf{L}:=\bomega\times\bu$ is the Lamb vector.  Hence, temporal fluctuations in $D$ directly affect the sign of $\nabla\cdot \mathbf{L}$.   Recalling Footnote \ref{footnote_Lamb} and the observations of \cite{Hamman_Klewicki_Kirby_2008_Lamb} that sign changes in $\nabla\cdot \mathbf{L}$ are correlated with strong turbulence, this may provide a mechanism behind some of the chaotic behavior observed in Section~\ref{sec_simulations}.
\subsection{A complex form in the 2D case}
We also note in passing that in the 2D inviscid ($\nu=0$) case, if we denote $\psi:=u_1 + iu_2 = \Re(\psi) + i\Im(\psi)$, then equation \eqref{Burgers_twist} can be written in complex scalar form as 
\begin{align}\label{Burgers_twist_complex}
\psi_t = i\psi L\psi, 
\end{align}
where $L$ stands for the linear differential operator defined by 
$L\psi := \partial_y\Re(\psi) + i\partial_x \Im(\psi)$.  This compact form may be of use, e.g., in computations.
\subsection{A pure rotational form}
We note in passing that one could also consider a more purely rotational modification of \eqref{Burgers_twist} by using the vector identity
\begin{align}\label{identity_curl_curl}
\triangle\bu=\nabla(\nabla\cdot\bu)-\nabla\times(\nabla\times\bu),
\end{align}
which for a divergence-free field $\bu$ becomes $\triangle\bu=-\nabla\times\bomega$.  Hence one might also consider the equation
\begin{align}\label{Burgers_twist_curl_curl}
\pd{\bu}{t}+\bomega\times\bu &= -\nu \nabla\times\bomega.
\end{align}
(There is no divergence-free assumption here, so \eqref{Burgers_twist_curl_curl} is not equivalent to \eqref{Burgers_twist}.)
A simple energy argument show that the following identity holds formally:
\begin{align}\label{pure_curl_L2}
\frac12\frac{d}{dt}\|\bu\|_{L^2}^2 + \nu\|\bomega\|_{L^2}^2 = 0.
\end{align}
While this may seem like enough to prove local well-posedness in, say, $L^\infty(0,T;L^2)\cap L^2(0,T;H(\text{curl}))$, this bound is not quite sufficient, as one would also need a bound on the time derivative $\pd{\bu}{t}$ in, e.g., $L^p(0,T;H(\text{curl})')$ (say, to get strong convergence from Aubin-Lions if using Galerkin methods).  However, doing so seems to require having some control over the full gradient $\nabla\bu$, but one does not have enough control coming from the ``viscous curl'' term alone (although it maybe be possible to work in higher-order spaces such as $L^\infty(0,T; H^3)$, etc.).  Moreover, even if one could show short-time well-posedness, it is not obvious that any analogue of \eqref{Burgers_vor_dot} holds, preventing a proof of a maximum principle, and hence of global well-posedness, at least using the approach discussed here.  In fact, using \eqref{identity_curl_curl}, one can rewrite \eqref{Burgers_twist_curl_curl} as
\begin{align}\label{Burgers_twist_curl_curl_div}
\pd{\bu}{t}+\bomega\times\bu &= \nu\triangle\bu -\nu\nabla(\nabla\cdot\bu),
\end{align}
and the analogue of \eqref{pure_curl_L2} is (formally)
\begin{align}\label{pure_curl_L2_div}
\frac12\frac{d}{dt}\|\bu\|_{L^2}^2 + \nu\|\nabla\bu\|_{L^2}^2 = \nu\|\nabla\cdot\bu\|_{L^2}^2,
\end{align}
indicating a possibly destabilizing effect of the divergence in equation \eqref{Burgers_twist_curl_curl}, which is reminiscent of the detabilziing effect of the divergence in the 2D KSE equations observed in \cite{Larios_Martinez_2024}.  We do not pursue these matters further here, although preliminary computational tests by the author (not shown here) indicate that solutions to \eqref{Burgers_twist_curl_curl} grow so rapidly that the simulated solution becomes infinite in only a few times steps.
\subsection{A KdV-Like equation}
It is far from clear how one might generalize  the Korteweg-de Vries ($KdV$) dispersive equation $u_t+uu_x=\alpha u_{xxx}$ to a higher-dimensional setting (see, e.g. the Kadomtsev-Petviashvili equation \cite{Kadomtsev_1970_stability,Petviashvili_1976_soliton}, or the Zakharov-Kuznetsov equation 
\cite{Zakharov_Kuznetsov_1974_3Dsolitons} for two possible generalizations).  Among the many difficulties, one is that in typical higher-dimensional generalizations, the nonlinear term does not vanish in $L^2$ energy estimates, unlike in the 1D case.  Therefore, one might consider the equation
\begin{align}\label{KdV_like_eqn}
\bu_t+(\nabla\times\bu)\times\bu &= \alpha\triangle\nabla\times\bu, \qquad \alpha\in\nR.
\end{align}
Clearly, the nonlinear term vanishes in $L^2$ energy estimates, but there seems to be more difficulty in handling the third-order term.  Of course, there is no claim that such an equation enjoys the many nice properties of the KdV equation, such as complete integrability.  However, if we look for a solution $\vect{b}$ which is a Beltrami flow, i.e., $\vect{b}$ satisfies $\nabla\times\vect{b}=\lambda\vect{b}$ for some $\lambda\in\nR$, then $\vect{b}$ is automatically divergence-free, so $\nabla\times(\nabla\times\vect{b}) = -\triangle\vect{b}$, then the equation simplifies to $\vect{b}_t = -\alpha\lambda^3\vect{b}$, which grows or decays depending on the sign of $\alpha\lambda$.
We also note that the Fourier modes of linearized equation satisfy $\partial_t\widehat{\bu}_\bk = -i\alpha|\bk|^2\bk\times\widehat{\bu}_\bk$, and hence are stationary for modes along the $\bk$ direction, and dispersive for modes perpendicular to the $\bk$ direction (i.e., divergence-free modes are dispersive). 
\subsection{Variations on the nonlinearity}
The 1D Kuramoto-Sivashinsky equations are known to be stabilized by the nonlinear term (see, e.g., \cite{Celik_Olson_Titi_2019} and the references therein); namely, in a mechanism first described  by E. Titi in the 1990's \cite{EdrissTitiPrivateCommunication} (see also \cite{Frisch_She_Thual_1986_JFM_viscoelastic}), the backward diffusion term in the KSE creates energy the low modes.  This energy then cascades to higher modes due to the effect of the nonlinear term, where is is dissipated by the fourth-order hyperdiffusion term.  It is unknown if such a mechanism holds for the 2D (or $n$D) KSE.  Following up on this idea, it is tempting to wonder if the nonlinear term can be ``enhanced'' in some way to cause an even greater cascade, while still maintaining the property that the nonlinearity is orthogonal to the flow.  Therefore, we propose the following equation, which has many nice properties.
\begin{align}\label{rotKSE_higher_order}
    \bu_t-((-\triangle)^{s/2}\bu)\times\bu + \lambda\triangle\bu + \triangle^2\bu &= \mathbf{0},
\end{align}
where $\lambda\geq0$ and $0<s\leq2$.  
In particular, it is straightforward to show global well-posedness of this equation in 3D periodic domains.  Indeed simple energy estimates show that, for initial data in $H^s$ and arbitrary $T>0$, there exists a unique solution in $L^\infty(0,T;H^s)\cap L^2(0,T;H^{1+s})$.  Lower-regularity solutions may also be possible.  Moreover, there are at least two energy balance laws, corresponding to taking inner products with $\bu$ and $(-\triangle)^{s/2}\bu$, reminiscent of the 2D NSE, and hence, one expects a forward energy cascade and a backwards enstrophy-like cascade (even in 3D). This backwards cascade combined with the backwards diffusion may be a source of destabilization of large scales when $\lambda$ is sufficiently large.  
For the sake of brevity, we delay the study of these questions to a forthcoming work.  Note that in 2D, the nonlinear term points out of the plane, so it may be better to consider a different nonlinear term instead, such as  $((-\triangle)^{s/2}(\nabla\times\bu))\times\bu$.
\subsection{Pedagogical Considerations}\label{sec_pedagogical}
We note that the rotational equations proposed in this work, in particular, ``Burgers with a twist'' \eqref{Burgers_twist}, ``Kuramoto-Sivashisnky with a twist'' \eqref{rotKSE_intro}, and the higher-order rotational equation \eqref{rotKSE_higher_order} (maybe ``Burgers with a twist and a side of extra derivatives''?) can serve as an early introduction to the mathematical analysis of Navier-Stokes-like equations.  For example, even in an advanced course in partial differential equations (PDEs), there is not always enough time to develop the well-posedness theory for the Navier-Stokes equations, since one needs to develop divergence-free spaces, the Leray-Helmholtz projection, the de Rham Theorem, and so on, in order to handle the pressure.  However, one can rather quickly develop for, e.g., \eqref{Burgers_twist}, a nearly analogous theory to that of the NSE after introducing Sobolev spaces (e.g., via Fourier series), avoiding all of the extra machinery that is needed to handle the pressure. Therefore, rotational equations of the forms proposed here may be a useful topic to include in time-limited courses.  Indeed, the author tried this with equation \eqref{Burgers_twist} in a one-semester advanced course on  PDEs in Fall 2023 at the University of Nebraska--Lincoln (Math 941), with successful results: after seeing a proof of the well-posedness of equation \eqref{Burgers_twist} in lectures, the students were able to prove the well-posedness of equation \eqref{rotKSE_higher_order} (with $s=2$) on a take-home assignment.
We also note that equation \eqref{Burgers_twist} serves as a possible counterexample to the often misused and misunderstood notion that ``\textit{a priori} estimates are sufficient'' to prove things like existence, uniqueness, etc.  Indeed, as pointed out in Remark \ref{remark_no_weak_Burgers_twist_sol}, one seemingly has all the \textit{a priori} estimates one would need to establish the existence of weak solutions.  However, this is not the case, since the convergence of the Galerkin solutions to a weak solution must still be shown, but for equation \eqref{Burgers_twist}, this requires strong convergence in $H^1$, while one only has weak convergence in this space.  This differs from the Navier-Stokes case, where strong convergence in $L^2$ is sufficient to establish the existence of weak solutions.  A student focusing only on \textit{a priori} estimates, without the corresponding rigorous convergence arguments, might overlook this subtle point.

\section{Conclusions}\label{sec_conclusions}
Given the equivalence of forms \eqref{NSE} and \eqref{NSE_vor} of the Navier-Stokes equations, there evidently no particular \textit{a priori} reason why the standard Burgers equation \eqref{Burgers} should be preferred over \eqref{Burgers_twist} 
as a simplified  phenomenological model for the Navier-Stokes equations.  Whereas \eqref{Burgers} emphasizes the advective character of the Navier-Stokes nonlinearity, \eqref{Burgers_twist}, \eqref{Burgers_twist} emphasizes its rotational character.  Moreover, \eqref{Burgers_twist} has the same energy balance at that of Navier-Stokes, has a global attractor, and has solutions which appear to be chaotic in simulations.  
Therefore, we proposed this as a model to study, and proved global well-posedness in the viscous case, and in the inviscid case proved short-time well posedness and exhibited finite-time blow up.  
Due to some subtleties in the model, we provided fully rigorous proofs, using Galerkin methods to establish short-time well-posedness of strong solutions in the viscous case, and then boot-strapping arguments to show the existence of classical solutions.  
With this level of smoothness, we were able to establish a maximum principle for the system, allowing for a proof of global well-posedness.   
We then established the existence of two forms of global attractor of a damped-driven version of the system: one attracts in the whole phase space $H^1$, but only in the $L^2$ norm, the other attracts in the $H^1$ and $L^\infty$ norms, but only on the restricted space $H^1\cap L^\infty$. Similar results can likely be established without damping in the bounded domain case with homogeneous Dirichlet boundary conditions, but to focus the discussion, we did not pursue that case here. 
We then proposed a rotational version of the 2D Kuramoto-Sivashinsky equations (which were the original idea for this project) and proved global well-posedness for the system.  
Next, we proposed a numerical scheme to handle the rotational nonlinearity, and used it in a simulation to exhibit some of the large-time dynamics of the system, which appear to be non-trivial and possibly chaotic.  Finally, we gave some consideration of related equations with rotational form that may be interesting to study, and some remarks on pedagogical applications of these ideas in the hope that they may be of use to others.

\section*{Acknowledgments}
 \noindent
The author would like to thank the Isaac Newton Institute for Mathematical Sciences, Cambridge, for support and warm hospitality during the 2022 programme ``Mathematical aspects of turbulence: where do we stand?'' where some of work on this manuscript was undertaken. This work was supported by EPSRC grant no. EP/R014604/1.
The author was partially supported by NSF grants DMS-2206762, DMS-2510494, CMMI-1953346, and USGS Grant No. G23AC00156-01.

\begin{scriptsize}

\end{scriptsize}
\end{document}